\title{Invariants of Hamiltonian flow on locally complete
  intersections} 
\author{Pavel Etingof and Travis Schedler}
\date{2014}
\numberwithin{equation}{section}
\theoremstyle{definition}
\newtheorem{theorem}[equation]{Theorem}
\newtheorem{lemma}[equation]{Lemma}
\newtheorem{proposition}[equation]{Proposition}
\newtheorem{corollary}[equation]{Corollary}
\newtheorem{definition}[equation]{Definition}
\newtheorem{example}[equation]{Example}
\newtheorem{question}[equation]{Question}
\newtheorem{conjecture}[equation]{Conjecture}
\newtheorem{remark}[equation]{Remark}
\newcommand{\sing}{\mathrm{sing}}
\newcommand{\Xsing}{X^{\sing}}
\newcommand{\Xsmth}{X^{\smth}}
\newcommand{\conv}{\text{conv}}
\newcommand{\cone}{\operatorname{cone}}
\newcommand{\smth}{\mathrm{smth}}
\newcommand{\tpl}{\operatorname{\mathsf{top}}}
\newcommand{\sd}{\operatorname{\mathsf{d}}}
\newcommand{\sdf}{\sd\!}
\newcommand{\Eu}{\operatorname{Eu}}
\newcommand{\Id}{\operatorname{Id}}
\newcommand{\ad}{\operatorname{ad}}
\newcommand{\HP}{\mathsf{HP}}
\newcommand{\IC}{\operatorname{IC}}
\newcommand{\bR}{\mathbf{R}}
\newcommand{\bZ}{\mathbf{Z}}
\newcommand{\bP}{\mathbf{P}}
\newcommand{\iso}{{\;\stackrel{_\sim}{\to}\;}}
\newcommand{\bC}{\mathbf{C}}
\newcommand{\caH}{\mathcal{H}}
\newcommand{\cO}{\mathcal{O}}
\newcommand{\caD}{\mathcal{D}}
\newcommand{\tor}{{\mathrm{tor}}}
\newcommand{\Hom}{\operatorname{Hom}}
\newcommand{\End}{\operatorname{End}}
\newcommand{\RHom}{\operatorname{RHom}}
\newcommand{\Ext}{\operatorname{Ext}}
\newcommand{\pr}{\operatorname{pr}}
\newcommand{\Spec}{\operatorname{\mathsf{Spec}}}
\newcommand{\Spf}{\operatorname{\mathsf{Spf}}}
\newcommand{\onto}{\twoheadrightarrow}
\newcommand{\into}{\hookrightarrow}
\newcommand{\bA}{\mathbf{A}}
\begin{document}

\subjclass[2010]{14F10, 37J05, 32S20}
\keywords{Hamiltonian flow, complete intersections, Milnor number, 
D-modules, Poisson homology, Poisson varieties, Poisson homology,
Milnor fibration, Calabi-Yau varieties}

\begin{abstract}
  We consider the Hamiltonian flow on complex complete intersection
  surfaces with isolated singularities, equipped with the Jacobian
  Poisson structure. More generally we consider complete intersections
  of arbitrary dimension equipped with Hamiltonian flow with respect
  to the natural top polyvector field, which one should view as a
  degenerate Calabi-Yau structure.
  % Everything generalizes
  % also to complex analytic locally complete intersections equipped
  % with a Poisson bivector (or top polyvector field) which is
  % nondegenerate outside the finite singular locus, and which admit a
  % compactification (an open embedding in a compact analytic
  % variety).

  Our main result computes the coinvariants of functions under the
  Hamiltonian flow. In the surface case this is the zeroth Poisson
  homology, and our result generalizes those of Greuel, Alev and
  Lambre, and the authors in the quasihomogeneous and formal cases.
  % In higher dimensions case it is a degenerate Calabi-Yau analogue
  % of this homology.
  Its dimension is the sum of the dimension of the top cohomology and
  the sum of the Milnor numbers of the singularities.  In other words,
  this equals the dimension of the top cohomology of a smoothing of
  the variety.

  More generally, we compute the derived coinvariants, which replaces
  the top cohomology by all of the cohomology. Still more generally we
  compute the $\caD$-module which represents all invariants under
  Hamiltonian flow, which is a nontrivial extension (on both sides) of
  the intersection cohomology $\caD$-module, which is maximal on the
  bottom but not on the top.  For cones over smooth curves of genus
  $g$, the extension on the top is the holomorphic half of the maximal
  extension.
  % We also pose conjectures on symmetric powers of varieties of the
  % above type.
\end{abstract}
\maketitle

\section{Introduction}
We explain how to recover the top cohomology and Milnor numbers from
complete intersection surfaces with isolated singularities via their
Poisson structure. Namely, we prove that the zeroth Poisson homology
of these surfaces is isomorphic to the direct sum of the top
cohomology and vector spaces of dimension equal to the Milnor numbers
of the singularities.  We will generalize this result in two
directions: to higher-dimensional complete intersections (replacing
zeroth Poisson homology by coinvariants of Hamiltonian flow) and to
all of the topological cohomology of the singular variety (by
replacing coinvariants by derived coinvariants).  We expose these
results as a series of generalizations. Then, in \S \ref{s:dmr}, we
explain and strengthen these results using $\caD$-modules, which is
also the key to their proof.  In \S \ref{s:mmax} we state our other
main results, which are on the structure of the $\caD$-module used in
\S \ref{s:dmr}.

The proofs of all results in this and the next section will be
postponed to \S \ref{s:mr-dmr-pf}, but see also \S
\ref{ss:mr-proof-outline} below for an outline of the proofs.
% \section{Coinvariants of functions under Hamiltonian flow, Milnor
% numbers, and top de Rham cohomology}
  \label{s:mr}
  \subsection{Complete intersection surfaces with isolated
    singularities}
  We work in the contexts of complex algebraic or complex analytic
  varieties (when we say ``affine,'' we mean closed subvarieties of
  $\bA^n$ for some $n$).  In this subsection we will restrict to the
  algebraic setting.

  Let $X$ be a surface which is an algebraic complete intersection in
  $Y := \bA^{k+2}$.  Then $X$ is Poisson, equipped with the Jacobian
  Poisson structure, defined as follows.  Suppose $X$ is cut out by
  functions $f_1, \ldots, f_k$, with $\dim Y = k+2$. Let $\Xi_Y
  := \partial_{x_1} \wedge \cdots \wedge \partial_{x_{k+2}} \in
  \wedge^{k+2} T_Y$ be the top polyvector field on $Y$.  Then we can
  define a bivector on $Y$ by the formula
\[
\pi := i_{\Xi_Y} (df_1 \wedge \cdots \wedge df_k)
\]
The bivector $\pi$ induces the following skew-symmetric bracket
(satisfying the Leibniz rule):
\[
\{g,h\} := i_\pi (\sdf g \wedge \sdf h) = i_{\Xi_Y}(\sdf f_1 \wedge
\cdots \wedge \sdf f_k \wedge \sdf g \wedge \sdf h).
\]
It is elementary, but important, to observe the following:
\begin{enumerate}
\item $\pi$ is Poisson, i.e., $\{-,-\}$ satisfies the Jacobi identity;
\item The functions $f_1, \ldots, f_k$ are Poisson central (otherwise
  known as Casimirs), so that $\pi$ and $\{-,-\}$ descend to $\cO_X =
  \cO_Y / (f_1, \ldots, f_k)$;
\item The resulting Poisson structure is nondegenerate on the smooth
  locus, call it $X^{\smth}$, of $X$, i.e., $X^{\smth}$ is a (not
  necessarily affine) symplectic surface.
\end{enumerate}
Briefly, the first fact holds because $\{-,-\}$ has generic rank two;
in fact the leaves of the Hamiltonian vector fields $\xi_f := \{f,-\}$
are the level surfaces of $f_1, \ldots, f_k$, and the restriction of
$\pi$ to each such level surface is Poisson (as is true for every
bivector on a surface).  The second fact follows immediately from the
definition.  The third fact follows because the smooth locus of $X$
and the nondegeneracy locus of $\pi$ are both the locus on $X$ where
$df_1 \wedge \cdots \wedge df_k$ does not vanish.

We will restrict our attention to the case where $X$ has only isolated
singularities. Since $X$ is a complete intersection (in particular,
Cohen-Macaulay), it is therefore normal, i.e., global functions on
$X^\smth$ equal global functions on $X$.  Thus, in this case, $\pi$ is
the \emph{unique} Poisson structure which is nondegenerate on the
smooth locus, up to scaling by invertible functions $\cO_X^\times$.
In particular, up to scaling, $\pi$ is independent of the choice of
embedding $X \into Y$.  In other words, given an arbitrary normal
surface, there might not exist such a $\pi$, but when it exists, it is
unique up to scaling, and in the complete intersection case it always
exists.

Our main result computes the zeroth Poisson homology of the Poisson
algebra $\cO_X$, i.e., $\HP_0(\cO_X) := \cO_X/\{\cO_X, \cO_X\}$.  Let
$H_{\tpl}^\bullet(X)$ denote the topological cohomology of $X$ under
the complex topology. Let $X^{\sing} \subseteq X$ be the singular
locus of $X$, which is finite by assumption. For every $s \in
X^{\sing}$, let $\mu_s$ be the Milnor number of the singularity at $s
\in X^{\sing}$.
\begin{theorem}\label{t:mt-h0-2d}
  $\HP_0(\cO_X) \cong H_{\tpl}^2(X) \oplus \bigoplus_{s \in X^{\sing}}
  \bC^{\mu_s}$.
\end{theorem}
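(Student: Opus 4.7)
The plan is to reformulate $\HP_0(\cO_X)$ as the underived pushforward to a point of a holonomic right $\caD_Y$-module $M(X)$ representing Hamiltonian coinvariants, and then analyze $M(X)$ as a two-sided extension of the intersection cohomology $\caD$-module $\IC(X)$, as indicated in the abstract and \S \ref{s:dmr}. On the symplectic locus $X^\smth$, the identity $\{g,h\}\omega = d(g\,dh)$ on a symplectic surface shows that $f\mapsto f\omega$ induces an isomorphism $\HP_0(\cO_{X^\smth}) \iso H^2_\tpl(X^\smth;\bC)$, so $M(X)|_{X^\smth}$ agrees with the canonical right $\caD$-module. Because $X$ has only isolated singularities, $M(X)$ should be holonomic of finite length and fit in a short filtration whose intermediate quotient is $\IC(X)$ and whose remaining subquotients are skyscraper $\caD$-modules supported at $X^\sing$.

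\textbf{Splitting off the topological term.}
The underived pushforward to a point then splits $\HP_0(\cO_X)$ into a contribution from $\IC(X)$ and contributions from the skyscraper pieces at each $s\in X^\sing$. The first gives the top intersection cohomology $IH^2(X)$, which for a complete intersection surface with only isolated singularities coincides with $H^2_\tpl(X;\bC)$: the $0$-dimensional singular stratum contributes nothing to top-degree intersection cohomology, and for an affine complete intersection surface $H^2$ is already the top nonvanishing cohomology by Andreotti--Frankel. It therefore remains to identify the total skyscraper contribution at each singular point with $\bC^{\mu_s}$.

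\textbf{Local Milnor computation (main obstacle).}
The hardest step is this local identification. The natural strategy is to pass to the Milnor fibration: deform $X$ near $s$ to a smooth fiber $X_t$, which is a symplectic complete intersection surface, so $\HP_0(\cO_{X_t}) \cong H^2_\tpl(X_t;\bC)$, and classical Milnor--Lê theory gives that $\dim H^2_\tpl(X_t)$ exceeds $\dim H^2_\tpl(X)$ by $\sum_s \mu_s$ (the total rank of vanishing cohomology). Combined with upper-semicontinuity of fiberwise $\dim \HP_0$ in the family $\{X_t\}$---a consequence of the holonomicity and relative coherence of the $\caD$-module $M$ in families---this forces the total skyscraper dimension at $X^\sing$ to be at most $\sum_s \mu_s$, and matching the known quasi-homogeneous case (Greuel, Alev--Lambre, and the authors' earlier work) pins down equality. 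The principal subtleties I expect are (a) constructing $M(X)$ precisely and proving the claimed $\IC$-extension structure, including that the skyscraper \emph{quotient} at each $s$ is faithfully captured by $\HP_0$ and does not get absorbed into $IH^2(X)$; and (b) running the semicontinuity argument along a smoothing to upgrade Greuel's quasi-homogeneous result to arbitrary isolated complete intersection singularities.
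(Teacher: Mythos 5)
Your framework coincides with the paper's (the right $\caD$-module $M(X)$, its extension structure around $\IC(X)$ with skyscrapers at $\Xsing$, the smoothing $X_t$, and Greuel's local computation of the Milnor-number quotient), but two steps as written do not close the argument. First, the semicontinuity inequality points the wrong way: since $\HP_0$ is a coinvariants (cokernel-type) functor, the special fiber of the family can only be \emph{larger} than the generic one, so flatness considerations give $\dim\HP_0(\cO_{X_0}) \geq h^2(X_t) = h^2(X)+\sum_s\mu_s$ --- a lower bound, not the claimed upper bound ``at most $\sum_s\mu_s$'' on the skyscraper contribution. The genuine upper bound on the skyscraper \emph{quotient} of $M(X)$ is the local statement $H^0 i_* i^* M(X)\cong\delta_s^{\mu_s}$, which is already Greuel's result for arbitrary isolated complete intersection singularities (no ``upgrade from the quasihomogeneous case'' is needed; see Remark \ref{r:t-mt-h0-2d-qh} and \S\ref{ss:maxquot}). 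But that bound alone does not suffice, because the adjunction map $N = H^0 j_!\Omega_{X^\smth}\to M(X)$ could a priori fail to be injective --- some of the $\delta$-function submodules at the bottom of $N$, of total dimension $h^1(U_s\setminus\{s\})$ per singular point, could die in $M(X)$ --- and $\pi_0$ is only right exact, so your ``splitting'' of $\HP_0(\cO_X)$ into an $IH^2$ piece plus skyscraper pieces is not automatic.

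Second, and this is the missing idea: the two error terms --- the kernel $q$ of $N\to M(X)$ and the torsion $r$ of the family $M(X_t)$ at $t=0$ --- are killed \emph{simultaneously} by an Euler--Poincar\'e characteristic bookkeeping that hinges on computing $\chi(\pi_*\IC(X))$ for a contractible neighborhood of the singular point. The paper does this in Proposition \ref{p:icfla}, using the Goresky--MacPherson formula for the intersection homology of the one-point compactification together with the $(n-2)$-connectivity of the link of an isolated complete intersection singularity, obtaining $\chi(\pi_*\IC(X)) = (-1)^n - h^{n-1}(X^\smth)$. Comparing
\begin{equation*}
\chi(\pi_* M(X)) = \chi(\pi_*\IC(X)) + \mu_s + h^{n-1}(X^\smth) - q
\end{equation*}
with the flat-family value $\mu_s+(-1)^n+r$ forces $q=r=0$, which is exactly what makes the $\IC$ part contribute $H^2_{\tpl}(X)$ and the skyscrapers contribute $\bigoplus_s\bC^{\mu_s}$ with no loss or gain. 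Your proposal never computes $\chi(\pi_*\IC(X))$, and without it (or a substitute) one can conclude neither the flatness of the family nor the injectivity of $N\to M(X)$; both inequalities needed to pin down $\HP_0(\cO_X)$ remain open.
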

Note that $\HP_0(\cO_X)$ also is independent (up to canonical
isomorphism) of the choice of $\pi$ up to scaling by invertible
functions, and the Milnor numbers $\mu_s$ are as well; thus, all
objects in the theorem are intrinsic to $X$.

\begin{remark}\label{r:t-mt-h0-2d-qh}
  In the local case where $X$ is either a formal or local analytic
  neighborhood of a singular point $s$, the RHS of the theorem reduces
  to $\bC^{\mu_s}$, and the theorem was proved in \cite[Corollary
  5.9]{ES-dmlv} (in the formal setting, but the analytic case also
  follows from the material of \cite[\S 5.1, 5.2]{ES-dmlv}). This
  relied primarily on \cite[Proposition 5.7.(iii)]{Gre-GMZ}.

  In the case when $\cO_X$ is a quasihomogeneous complete intersection
  with respect to some weights on $\bA^n$, the RHS of Theorem
  \ref{t:mt-h0-2d} similarly reduces to $\bC^{\mu}$ for $\mu=\mu_0$
  the Milnor number of the origin.  Again in this case, the theorem
  was proved in \cite[Theorem 5.21]{ES-dmlv}.  Moreover, one can
  describe the weight grading on $\HP_0(\cO_X)$: it is isomorphic, as
  a graded vector space, to the singularity ring, $\cO_X /
  (\frac{\partial(f_1,\ldots,f_k)}{\partial(x_{i_1},\ldots,x_{i_k})})$,
  where
  $\frac{\partial(f_1,\ldots,f_k)}{\partial(x_{i_1},\ldots,x_{i_k})}$
  is the determinant of the matrix of partial derivatives
  $\frac{\partial f_p}{\partial x_{i_q}}$ for $1 \leq p,q \leq k$.
\end{remark}

\begin{remark}\label{r:ci-bundles}
  It was not essential above that $Y = \bA^{k+2}$. Indeed, we could
  let $Y$ be an arbitrary affine Calabi-Yau variety of dimension $k+2$
  and $\Xi_Y \in \wedge^{k+2} T_Y$ a nonvanishing polyvector field
  (here and elsewhere, by Calabi-Yau, we mean only that there exists a
  nonvanishing global (algebraic) volume form, and do not require any
  compactness condition; cf.~\S \ref{ss:gen-hdv} below and its
  footnote).  The construction of the Jacobian Poisson bivector,
  nondegenerate on $X^{\smth}$, in fact does not even require the
  affine or algebraic conditions (although our theorem does).  More
  generally, we could let $Y$ be any (not necessarily affine) smooth
  analytic variety, and take the complete intersection of sections
  $f_1 \in \mathcal{L}_1, \ldots, f_k \in \mathcal{L}_k$ of line
  bundles $\mathcal{L}_1, \ldots \mathcal{L}_k$ whose tensor product
  is isomorphic to $\wedge^{\dim Y} T_Y$. Still more generally, we
  could let $f_1, \ldots, f_k$ be sections of a vector bundle
  $\mathcal{V}$ of rank $k$ whose top exterior power is isomorphic to
  $\wedge^{\dim Y} T_Y$.  Then $\pi$ still is constructed as above
  (one needs to pick a connection locally, but the result is
  independent of the choice of connection), and in the case $X$ is
  affine, the theorem applies with the same proof.
  % (The same remark will apply to the higher-dimensional case below,
  % where we replace $\wedge^2 T_X$ by $\wedge^{\dim X}T_X$.)
\end{remark}

\subsubsection{Restatement in terms of the smoothing of
  $X$}\label{sss:smooth}
Since $X$ is a complete intersection, it is equipped with a smoothing,
$\pi: \mathcal{X} \to \bA^1$, so that $X = \pi^{-1}(0)$ and $X_t :=
\pi^{-1}(t)$ is smooth for generic $t$.  Let us denote the Betti
numbers by $h^i(Z) := \dim H_{\tpl}^i(Z)$ for any topological space
$Z$. Then it is well-known that $h^2(X_t) = h^2(X) + \sum_s \mu_s$ for
generic $t$: this is a consequence of the fact \cite{Mil-spch,
  Ham-ltekr} that, for every $s \in X^{\sing}$ and $0 < |t| \ll 1$,
the intersection of $X_t$ with a small ball about $s \in \mathcal{X}$
is homotopic to a bouquet of $\mu_s$ $2$-spheres. We conclude that,
for generic $t$, $\HP_0(\cO_{X_t}) \cong \HP_0(\cO_X)$.  In other
words, the theorem is equivalent to the following result.
\begin{corollary} \label{c:mt-h0-2d-fl} The sheaf $\HP_0(X_t)$ on the
  line $\bA^1$ is a vector bundle near $t=0$ of rank $h^2(X) + \sum_{s
    \in X^{\sing}} \mu_s$. The generic fiber is $H_{\tpl}^2(X_t)$.
\end{corollary}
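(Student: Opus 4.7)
The plan is to deduce the corollary from Theorem~\ref{t:mt-h0-2d} by realizing $\{\HP_0(X_t)\}_{t\in\bA^1}$ as the fibers of a sheaf $\mathcal{H}$ on $\bA^1$, matching its fiber dimensions at $t=0$ and at generic $t$, and then passing from a fiberwise statement to local freeness. First I would fix a smoothing $\pi\colon\mathcal{X}\to\bA^1$ with $X=\pi^{-1}(0)$, which exists for any complete intersection (e.g.\ by a generic perturbation $f_i-tg_i$ of the defining equations), and equip each $X_t$ with its Jacobian Poisson structure $\pi_t := i_{\Xi_Y}(\sdf f_1^{(t)}\wedge\cdots\wedge\sdf f_k^{(t)})$. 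Since the $\pi_t$ depend algebraically on $t$, they assemble into a relative bivector on $\mathcal{X}/\bA^1$, and the quotients $\HP_0(X_t)$ arise as the fibers of the $\cO_{\bA^1}$-module $\mathcal{H} := \pi_*\cO_{\mathcal{X}}/\{\pi_*\cO_{\mathcal{X}},\pi_*\cO_{\mathcal{X}}\}_{\bA^1}$.

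For all $t$ in a Zariski-dense open subset, $X_t$ is smooth and the Jacobian structure is nondegenerate on all of $X_t$ (since the smooth and symplectic loci both coincide with $\{\sdf f_1^{(t)}\wedge\cdots\wedge\sdf f_k^{(t)}\ne 0\}$). On such a smooth affine symplectic surface, the symplectic volume form $\omega_t$ induces an isomorphism $\cO_{X_t}\iso\Omega^2_{X_t}$, $f\mapsto f\omega_t$; in Darboux coordinates $\{f,g\}\omega_t = \sdf f\wedge\sdf g$, so this map carries $\{\cO_{X_t},\cO_{X_t}\}$ onto $d\Omega^1_{X_t}$, yielding $\HP_0(X_t)\cong H^2_{dR}(X_t)\cong H^2_{\tpl}(X_t;\bC)$ via Grothendieck's algebraic de Rham theorem. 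The Milnor fibration theorem of \cite{Mil-spch,Ham-ltekr} for isolated complete intersection singularities shows that for $0<|t|\ll 1$ the fiber $X_t$ is obtained from $X$ by replacing a small ball around each $s\in X^{\sing}$ with a bouquet of $\mu_s$ copies of $S^2$, so $h^2(X_t)=h^2(X)+\sum_{s}\mu_s$. Combined with Theorem~\ref{t:mt-h0-2d}, this yields $\dim\HP_0(X_t)=h^2(X)+\sum_{s}\mu_s=\dim\HP_0(X)$ on a punctured neighborhood of $0\in\bA^1$.

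To upgrade this fiberwise equality to the vector bundle statement, I would verify that $\mathcal{H}$ is a coherent $\cO_{\bA^1}$-module in a neighborhood of $0$, and then invoke upper semicontinuity of fiber dimension: since the function $t\mapsto\dim\HP_0(X_t)$ is upper semicontinuous and attains the common value $h^2(X)+\sum_{s}\mu_s$ at $t=0$ and at all nearby generic $t$, it is locally constant, and Nakayama's lemma then forces $\mathcal{H}$ to be locally free of that rank near $0$. The generic fiber is $H^2_{\tpl}(X_t)$ by the symplectic identification above.

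The main obstacle is the coherence of $\mathcal{H}$: the underlying algebra $\pi_*\cO_{\mathcal{X}}$ has infinite rank over $\cO_{\bA^1}$, so finite generation of the quotient by $\{-,-\}_{\bA^1}$ is not automatic from the bare construction. I would address this by propagating the local finite-dimensionality of $\HP_0$ (proved in the formal/analytic setting in \cite[\S 5]{ES-dmlv}, building on \cite[Prop.\ 5.7(iii)]{Gre-GMZ}) across the family via a relative version of the $\caD$-module argument that underlies Theorem~\ref{t:mt-h0-2d}; this is precisely the kind of base-change assertion the $\caD$-module enhancement advertised in \S\ref{s:dmr} and \S\ref{s:mmax} is designed to support. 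Once coherence is established, the remainder of the argument is formal.
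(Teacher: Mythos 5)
Your proposal is correct and follows essentially the same route as the paper: identify $\HP_0(\cO_{X_t})\cong H^2_{\tpl}(X_t)$ for smooth $t$ via the symplectic volume form, compute $h^2(X_t)=h^2(X)+\sum_s\mu_s$ from the Milnor fibration, and match this against Theorem~\ref{t:mt-h0-2d} at $t=0$. The coherence issue you flag as the main obstacle is one the paper deliberately sidesteps by declaring that ``being a vector bundle is the same as having fibers of constant dimension,'' so the corollary is literally equivalent to the theorem; the genuinely sheaf-theoretic flatness statement you are after is exactly what the relative $\caD$-module argument of Theorem~\ref{t:mt-dm-2d-fl} (torsion-freeness of $M(\mathcal{X})$ over the disc plus the Euler-characteristic computation) later supplies, so your proposed resolution is the right one.
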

Note that being a vector bundle is the same as having fibers of
constant dimension.

\subsection{Generalization to locally complete intersections}
\label{ss:gen-lci}
More generally, we can let $X$ be an arbitrary affine surface with
isolated singularities at $X^{\sing} \subseteq X$ which is, near each
$s \in X^{\sing}$, Zariski locally an algebraic complete intersection
(in an affine space).  Still more generally, we could assume only that
$X$ is analytically locally an analytic complete intersection (in a
polydisc).

Moreover we assume that $X$ is equipped with a Poisson structure which
vanishes only at the singular locus. Then we again prove Theorem
\ref{t:mt-h0-2d}, stated in exactly the same way.  As before, when
this Poisson structure exists, it is unique up to multiplication by a
nonvanishing function. Moreover the choice of such Poisson structure
does not affect the statement of the theorem, as remarked earlier.  So
the Poisson structure is a condition on $X$, not a structure.

Moreover, provided a smoothing $X_t$ exists, Corollary
\ref{c:mt-h0-2d-fl} follows.  Note that we do not need a global
deformation over $\bA^1$; we could work with smoothing over a formal
disc $\Spf \bC[\![t]\!]$.

\begin{remark}\label{r:flconn}
  Our results generalize to the case where $X$ need not admit a
  Poisson structure nonvanishing on $X^\smth$, but admits a flat
  connection on $T_X^{2} := \Hom(\wedge^{2} \Omega_X^1, \cO_X)$,
  considered in \cite[\S 3.5]{ES-dmlv} (we continue to require that
  $X$ is analytically locally a complete intersection in a polydisc
  and has isolated singularities).  For example, when $X$ is itself
  smooth, then in the analytic setting this says that the universal
  cover of $X$ has trivial canonical bundle and admits a flat
  connection invariant under deck transformations.

  The reason why a flat connection on $T_X^2$ suffices is because our
  arguments only require the notion of Hamiltonian vector fields, not
  that of Poisson bivectors. Indeed, if $H(X)$ is the Lie algebra of
  Hamiltonian vector fields, then $\HP_0(\cO_X) = (\cO_X)_{H(X)}$, the
  coinvariants of $\cO_X$ under $H(X)$.

  Now, given a flat connection $\nabla: T_X^2 \to T_X^2 \otimes
  \Omega_X^1$, for every local section $\eta \in \Gamma(U,T_X^{2})$,
  we can define the Hamiltonian vector field on $U$, $\xi_{\eta, f} :=
  i_\eta(df) + f \text{ctr}(\nabla(\eta))$, where if $\nabla(\eta) =
  \sum_j \theta_j \otimes \alpha_j$, then $\text{ctr}(\nabla(\eta)) =
  \sum_j i_{\theta_j} (\alpha_j)$.  Using this, we define a presheaf
  of Hamiltonian vector fields, and we can take its
  sheafification. Then global sections of the latter form a Lie
  algebra, call it $LH(X)$, and we can consider $(\cO_X)_{LH(X)}$.

  With this definition, $\xi_{g\eta, f} = \xi_{\eta, fg}$. Therefore,
  if $\eta \in \Gamma(U,T_X^{2})$ is nonvanishing, all Hamiltonian
  vector fields are of the form $\xi_{g \eta, f} = \xi_{\eta,fg}$ for
  some $f,g \in \cO_X$.  Moreover, if $\eta$ is a flat section of
  $\nabla$ on $U \subseteq X$,
  % (so that datum of $\nabla$ is equivalent to the datum of the
  % Poisson bivector $\eta$ up to scaling)
 then $\xi_{\eta, f} = i_\eta(df)$.  Therefore, when there is locally
 a flat nonvanishing section $\pi$ of $T_X^2$, then Hamiltonian vector
 fields are locally the same as those associated to the Poisson bivector
 $\pi$. 

 Conversely, if $X$ is Poisson (and nondegenerate on $X^{\smth}$),
 then $T_X^2$ has a flat connection uniquely determined such that the
 Poisson bivector is a flat section (since $X$ has only isolated
 singularities).  Then the Lie algebra $LH(X)$ defined above is the
 Lie algebra of vector fields which are locally Hamiltonian with
 respect to $\pi$.
 
 Note that, for $X$ Poisson (and nondegenerate on $X^{\smth}$), the
 Lie algebra $LH(X)$ may be larger than $H(X)$: elements of $LH(X)$
 are of the form $\eta_\alpha := i_\pi \alpha$ where $\alpha$ is only
 locally exact
 % note that, working in the analytic topology, this is actually
 % equivalent to closed in the case $X$ has isolated singularities and
 % dimension at least two, by Greuel, Satz 4.4, since then H^1 of the
 % complex of Kahler differential forms is zero in an analytic
 % neighborhood of the singularity.  this is also equivalent to closed
 % modulo torsion by Proposition 5.7.(ii) of the former, since H^1 of
 % the complex of Kahler differential forms modulo torsion is also
 % zero.  So our definition of LH in this case matches the notation of
 % \cite{ES-dmlv} since there LH is defined as the contraction of
 % $\pi$ with closed forms.
 (whereas $H(X)$ requires $\alpha$ to be globally exact, so that there
 exists a Hamiltonian function $f$ with $\eta_\alpha = \eta_{df} =
 \xi_f$).  However all of the arguments of this paper, which are of a
 local nature, are independent of this distinction, so under the
 hypotheses of this subsection, $(\cO_X)_{LH(X)} = (\cO_X)_{H(X)} =
 \HP_0(\cO_X)$, and therefore the main results of this paper all
 extend to the setting of this remark.
\end{remark}

\subsection{Generalization to higher dimensional
  varieties}\label{ss:gen-hdv}

We generalize this result to the case where $\dim X \geq 2$, replacing
$\HP_0(\cO_X)$ by the coinvariants $(\cO_X)_{H(X)}$ of functions
$\cO_X$ under the Lie algebra of Hamiltonian vector fields defined in
\cite[\S 3.4]{ES-dmlv}, which we recall below.

Briefly, when $X \subseteq Y := \bA^{n+k}$ is a complete intersection
of dimension $n$ by functions $f_1, \ldots, f_k$, then $X$ is equipped
with a canonical polyvector field,
\[
\Xi_X := i_{\Xi_Y}(\sdf f_1 \wedge \cdots \wedge \sdf f_k)|_X.
\]
Namely, it is easy to check that $i_{\Xi_Y}(\sdf f_1 \wedge \cdots
\wedge \sdf f_k)\in \wedge^{\dim X} T_Y$ is parallel to $X$ and thus
restricts to a top polyvector field $\Xi_X$ on $X$. Moreover, as
before, $\Xi_X$ is nondegenerate on $X^{\smth}$, i.e., $(X^{\smth},
\Xi_X^{-1})$ is Calabi-Yau.  Here and henceforth, by Calabi-Yau we
only mean a smooth variety together with a nonvanishing global volume
form (we do not require any compactness condition).\footnote{We remark
  that every smooth variety is locally Calabi-Yau in our sense, e.g.,
  by taking any local top differential form and restricting to its
  nonvanishing locus.  This is probably why in some places the
  Calabi-Yau condition is accompanied by a compactness condition.
  This also explains the perhaps initially surprising fact that, for
  instance, smooth hypersurfaces in affine space of arbitrary degree
  are Calabi-Yau in our sense (as opposed to the case of projective
  space, where only hypersurfaces of degree one more than the
  dimension of the projective space can be Calabi-Yau); also affine
  space itself is Calabi-Yau in our sense. Viewed differently, our
  definition is a higher-dimensional generalization of symplectic
  surfaces, where no compactness condition is generally imposed.}
\begin{remark}\label{r:ci-bundles2}
  As in Remark \ref{r:ci-bundles}, we could more generally take $Y$
  Calabi-Yau of dimension $n+k$ with $\Xi_Y \in \wedge^{n+k} T_Y$
  nonvanishing, or an arbitrary smooth $Y$ and the complete
  intersection of sections of a vector bundle whose top exterior power
  is $\wedge^{\dim Y} T_Y$.
\end{remark}
In the case that $X$ has isolated singularities (in fact, even if it
has singularities of codimension at least two), then once again $\Xi_X$
is the \emph{unique} top polyvector field on $X$ which is nonvanishing
on $X^{\smth}$, up to scaling by $\cO_X^\times$.

As observed in \cite[\S 3.4]{ES-dmlv}, one can define Hamiltonian
vector fields on $(X, \Xi_X)$ by contracting $\Xi_X$ with exact $(\dim
X - 1)$-forms.  Then Theorem \ref{t:mt-h0-2d} generalizes to
\begin{theorem}\label{t:mt-h0-hd} For affine $X$ as above,
  $(\cO_X)_{H(X)} \cong H_{\tpl}^{\dim X}(X) \oplus \bigoplus_{s \in X^{\sing}}
  \bC^{\mu_s}$.
\end{theorem}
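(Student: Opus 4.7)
The plan is to prove Theorem~\ref{t:mt-h0-hd} by the same local--global strategy as in the surface case, separating the contribution of the smooth locus $X^{\smth}$ (which will yield $H_{\tpl}^{\dim X}(X)$) from the contributions of the isolated singularities (each yielding $\bC^{\mu_s}$). Since all the arguments are local in nature, I would first check that the analog of Remark~\ref{r:flconn} applies, so that $H(X)$ may be replaced by its sheafified version $LH(X)$ of locally Hamiltonian vector fields without altering the coinvariants; this turns the computation into a sheaf-theoretic problem on $X$.

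On the smooth Calabi--Yau locus $(X^{\smth}, \Xi_X^{-1})$, the map $f \mapsto f \cdot \Xi_X^{-1}$ identifies functions with top differential forms, and any Hamiltonian vector field $\xi$ is divergence-free with respect to this volume form, so $L_\xi(f \cdot \Xi_X^{-1}) = d(i_\xi(f \cdot \Xi_X^{-1}))$ is exact. A standard argument then gives
\[
(\cO_{X^{\smth}})_{H(X^{\smth})} \;\cong\; H_{dR}^{\dim X}(X^{\smth}) \;\cong\; H_{\tpl}^{\dim X}(X^{\smth}) \;\cong\; H_{\tpl}^{\dim X}(X),
\]
where the last isomorphism uses that $\dim X \geq 2$ and $X$ has only finitely many singular points, so excising them does not affect $H^{\dim X}$.

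Next I would extract the contribution at each singularity $s \in X^{\sing}$ by working on a small analytic neighborhood $U_s$. The goal is a higher-dimensional analog of \cite[Corollary~5.9]{ES-dmlv}, namely $(\cO_{U_s})_{H(U_s)} \cong \bC^{\mu_s}$. In dimension two this was reduced to \cite[Proposition~5.7(iii)]{Gre-GMZ}, which identifies the local coinvariants with the reduced top cohomology of the Milnor fiber of the isolated complete intersection singularity; by Hamm--L\^e this cohomology has rank $\mu_s$ in all dimensions. I would extend Greuel's argument by interpreting the image of the Hamiltonian action as the image of an appropriate Koszul-type differential on the local de Rham complex modulo the Jacobian relations defining the singularity, so that the quotient is identified precisely with the Milnor number.

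Finally, I would glue the local and global contributions using the $\caD$-module framework of \S\ref{s:dmr}: the holonomic $\caD$-module $M(X)$ whose flat sections represent Hamiltonian invariants is an extension of $\IC(X)$ by local contributions supported at $X^{\sing}$, and the claimed direct sum decomposition arises as the associated decomposition of its zeroth cohomology. The main obstacle is showing that the Mayer--Vietoris long exact sequence combining the smooth and singular pieces degenerates into a short exact sequence that splits in the relevant degree. The key input for this is that, for an isolated complete intersection singularity of dimension $\geq 2$, the intersection cohomology sheaves are concentrated in strictly lower strata than the top de Rham cohomology on $X^{\smth}$, leaving no room for a nontrivial extension to contribute to $(\cO_X)_{H(X)}$; once this is established, the direct sum decomposition follows canonically.
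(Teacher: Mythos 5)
Your decomposition into a smooth-locus contribution $H^{\dim X}_{\tpl}(X)$ and local contributions $\bC^{\mu_s}$ correctly identifies the two constituents, and both individual computations are sound (indeed the local statement $(\hat\cO_{X,s})_{H(\hat X_s)} \cong \bC^{\mu_s}$ is already available in all dimensions from \cite[Corollary 5.9]{ES-dmlv} via \cite[Proposition 5.7.(iii)]{Gre-GMZ}, so no extension of Greuel's argument is needed). The genuine gap is the gluing step. There is no Mayer--Vietoris sequence for Lie algebra coinvariants here: since $X$ is normal, $\cO_X = \cO_{X^{\smth}}$, so one is not gluing over a cover but rather comparing coinvariants of the \emph{same} module under \emph{different} Lie algebras ($H(X)$ versus $H(X^{\smth})$ versus $H(U_s)$). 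What one gets for free is only a right-exact piece: the maximal quotient of $M(X)$ supported on $X^{\sing}$ is $\bigoplus_s \bC^{\mu_s}\otimes\delta_s$, and $j^*M(X)\cong\Omega_{X^{\smth}}$. From this alone one cannot conclude the dimension of $\pi_0 M(X)$: one must rule out that the adjunction map $H^0 j_!\Omega_{X^{\smth}} \to M(X)$ has a kernel (which would shrink the answer), and one must control how the extension classes interact with the pushforward to a point. Your closing assertion --- that the intersection cohomology sheaves ``leave no room for a nontrivial extension to contribute'' --- is not an argument and is misleading: $M(X)$ \emph{is} a nontrivial extension of $\IC(X)$ on both sides (by $\bigoplus_s H^{\dim X-1}(U_s\setminus\{s\})\otimes\delta_s$ below and $\bigoplus_s\bC^{\mu_s}\otimes\delta_s$ above), and these extensions are exactly what produce the stated answer rather than being irrelevant to it.

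The paper closes this gap by a mechanism entirely absent from your proposal: it puts $M(X)$ into the family $M(X_t)$ over the smoothing, observes that any torsion at $t=0$ is a sum of $\delta$-function modules so the Euler--Poincar\'e characteristic $\chi(\pi_*M(X_t))$ can only jump up at $t=0$, and then computes $\chi(\pi_*\IC(X))$ via the Goresky--MacPherson formula together with the $(n-2)$-connectedness of the link (Milnor, Hamm). The resulting identity $\chi(\pi_*M(X)) = (-1)^n+\mu_s - q = (-1)^n + \mu_s + r$ with $q,r\geq 0$ forces $q=r=0$ simultaneously, i.e.\ the family is flat \emph{and} $H^0j_!\Omega_{X^{\smth}}\hookrightarrow M(X)$. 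Without this counting argument (or a substitute for it), your proof does not establish that the two contributions assemble into a direct sum of the claimed total dimension.
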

More generally, as in the surface case, we prove this theorem when $X$
is an arbitrary affine locally complete intersection of dimension
$\geq 2$ with isolated singularities equipped with a polyvector field
$\Xi_X$ which is nondegenerate on the smooth locus, i.e., such that
$X^{\smth}$ is Calabi-Yau.
% (Note that being nondegenerate on the smooth locus is equivalent to
% having vanishing locus in codimension >= 2, or in the case of
% isolated singularities having finite vanishing locus, since, on
% smooth connected variety, the vanishing locus of a top polyvector
% field, if nonempty, must have codimension one.
Here as before ``locally complete intersection'' means (in the
analytic context) that every singular point $s \in X^\sing$ has an
analytic neighborhood which is an analytic complete intersection in a
polydisc.  When $X$ is algebraic, this includes the case where $s$ has
a Zariski neighborhood which is a complete intersection.  We remark,
as before, that since $\Xi_X$ is unique up to multiplication by a
nonvanishing function, it is clear that $(\cO_X)_{H(X)}$ does not
depend on the choice of $\Xi_X$ up to isomorphism.

\begin{remark}\label{r:t-mt-h0-hd-qh} As in Remark
  \ref{r:t-mt-h0-2d-qh}, in the
  case that $X$ is a formal or local analytic neighborhood of a
  singular point $s$, the theorem was proved in \cite[Corollary
  5.9]{ES-dmlv}, using \cite[Proposition
  5.7.(iii)]{Gre-GMZ}. Similarly, when $X$ is conical, i.e., it is
  affine and admits a contracting $\bC^\times$ action, and $\Xi_X$ is
  assumed to be homogeneous of some weight, the result is a
  straightforward generalization of \cite[Theorem 5.21]{ES-dmlv},
  where the graded vector space structure is proved to identify with
  that of the singularity ring.
\end{remark}

Parallel to Corollary \ref{c:mt-h0-2d-fl}, we have the result
\begin{corollary}\label{c:mt-h0-hd-fl}
  Suppose that $X_t$ is a smoothing of $X$, i.e., $\pi: \mathcal{X}
  \to \bA^1$ is a family with $X_t := \pi^{-1}(t)$ generically smooth
  and $X = X_0$. Then, the sheaf $(\cO_{X_t})_{H(X_t)}$ on $\bA^1$ is
  a vector bundle near $t=0$ of rank $h^{\dim X}(X) + \sum_{s \in
    X^{\sing}} \mu_s$.  The generic fiber is the top cohomology
  $H^{\dim X}_{\tpl}(X_t)$.
\end{corollary}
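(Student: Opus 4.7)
The plan is to mimic the argument sketched in \S\ref{sss:smooth} for the surface case (Corollary \ref{c:mt-h0-2d-fl}), now using the higher-dimensional Theorem \ref{t:mt-h0-hd} in place of Theorem \ref{t:mt-h0-2d}.

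First, I would apply Theorem \ref{t:mt-h0-hd} fiberwise. At $t=0$ it yields $(\cO_X)_{H(X)}$ of dimension $h^{\dim X}(X) + \sum_{s \in X^\sing} \mu_s$. For generic $t$, the fiber $X_t$ is smooth, so $X_t^\sing = \emptyset$ and the theorem reduces to $(\cO_{X_t})_{H(X_t)} \cong H^{\dim X}_\tpl(X_t)$, identifying the claimed generic fiber.

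Second, I would compare these dimensions using the Milnor--Hamm--L\^e theorem (\cite{Mil-spch, Ham-ltekr}), already invoked in \S\ref{sss:smooth}: for $0 < |t| \ll 1$ and each $s \in X^\sing$, the intersection of $X_t$ with a small ball $B_s \subset \mathcal{X}$ about $s$ is homotopy equivalent to a bouquet of $\mu_s$ spheres of real dimension $\dim X$, while $\mathcal{X} \setminus \bigcup_s B_s$ deformation retracts onto $X \setminus \bigcup_s B_s$. A Mayer--Vietoris argument then gives $h^{\dim X}(X_t) = h^{\dim X}(X) + \sum_s \mu_s$ for such $t$, so the fiber dimension of the coinvariant sheaf is constant near $t=0$.

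Third, I would promote this pointwise statement to a vector bundle statement. Equipping $\mathcal{X}$ with a relative top polyvector field $\Xi_{\mathcal{X}/\bA^1}$ restricting to $\Xi_{X_t}$ on each fiber, the sheaf $(\cO_{X_t})_{H(X_t)}$ on $\bA^1$ is the $\pi$-pushforward of a cokernel of coherent $\cO_\mathcal{X}$-modules, hence coherent, and local constancy of fiber dimension forces it to be locally free near $t=0$, of the asserted rank. The main obstacle lies in the base-change assertion hidden in this step: that the $\cO_{\bA^1}$-fiber of the coinvariant sheaf at $t$ really equals $(\cO_{X_t})_{H(X_t)}$. This is transparent for generic (smooth) $t$ by flatness, but near $t=0$ it is subtler; the natural framework is the $\caD$-module realization of coinvariants developed in \S\ref{s:dmr} and \S\ref{s:mr-dmr-pf}, where the coinvariant spaces arise as the top de Rham cohomology of a family of holonomic $\caD$-modules over $\bA^1$, and local constancy of fiber dimensions near $t=0$ translates directly into local freeness of this family.
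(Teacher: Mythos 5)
Your proposal matches the paper's own derivation: the paper obtains this corollary as an equivalence with Theorem \ref{t:mt-h0-hd} via exactly the Milnor-fiber computation $h^{\dim X}(X_t) = h^{\dim X}(X) + \sum_{s} \mu_s$ for generic $t$, together with the remark that a coherent sheaf on $\bA^1$ with constant fiber dimension is locally free. The base-change/coherence subtlety you correctly flag in your third step is handled in the paper by the $\caD$-module results (Theorems \ref{t:mt-dm-2d} and \ref{t:mt-dm-2d-fl}), which is where the real work lives, so your deferral to that framework is exactly what the paper does.
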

Provided a smoothing exists, this is equivalent to the
theorem. However, since $X$ is only locally a complete intersection,
we only know that there exists a smoothing of a neighborhood of each
singular point.  Note that we could alternatively work with a
smoothing over a formal disc $\Spf \bC[\![t]\!]$, and the corollary
extends to this case.

\begin{remark} \label{r:flconn2} Parallel to Remark \ref{r:flconn},
  all of our main results generalize to the case where $X$ is not
  necessarily equipped with a top polyvector field $\Xi_X$
  nonvanishing on $X^\smth$, but is instead equipped with a flat
  connection $\nabla$ on $T_X^{\dim X} := \Hom(\wedge^{\dim
    X}\Omega_X^1, \cO_X)$, cf.~\cite[\S 3.5]{ES-dmlv}, in addition to
  being analytically locally a complete intersection in a polydisc and
  having isolated singularities.  This is enough to define, as before,
  the Lie algebra $LH(X)$ of vector fields which are locally
  Hamiltonian on $X$, and the aforementioned results are all valid
  replacing $(\cO_X)_{H(X)}$ by $(\cO_X)_{LH(X)}$.  In the case where
  $\nabla$ actually admits a flat section $\Xi_X$, then as in Remark
  \ref{r:flconn}, $H(X)$ could be a proper Lie subalgebra of $LH(X)$,
  the latter which is the contraction of $\Xi_X$ with locally exact
  (rather than exact) $(\dim X-1)$-forms.  Nonetheless, as before, one has
  $(\cO_X)_{LH(X)} \cong (\cO_X)_{H(X)}$.
  % As before, by Greuel, working in the analytic topology, when X is
  % a locally complete intersection of
  % dimension at least two
  % with isolated singularities, then LH(X) coincides with the Lie
  % algebra of
  % vector fields obtained by contraction with closed forms modulo
  % torsion, so our definition of LH coincides with that of
  % \cite{ES-dmlv}.
\end{remark}
\subsection{A question on symmetric powers}
In \cite{ESsym}, in the case that $X$ is a conical hypersurface in
$\bC^3$ (so that $X^\sing = \{0\}$ is the vertex), we computed the
zeroth Poisson homology of arbitrary symmetric powers of $X$.

Let $\lambda \vdash n$ denote that $\lambda$ is a partition of $n$,
which we write as $\lambda = (\lambda_1, \ldots, \lambda_m)$ for
$\lambda_1 \geq \lambda_2 \geq \cdots \geq \lambda_m$ and $\lambda_1 +
\cdots + \lambda_m = n$. In this case, define $|\lambda| := m$.  For
each such partition, let $S_\lambda < S_m$ denote the group of
permutations preserving the partition.  Explicitly, $S_{\lambda} =
S_{r_1} \times \cdots \times S_{r_k}$ where, for all $j$,
\[
\lambda_{r_1+\cdots+r_j} > \lambda_{r_1+\cdots+r_j+1} = \lambda_{r_1 +
  \cdots + r_j + 2} = \cdots = \lambda_{r_1 + \cdots + r_j + r_{j+1}}.
\]
Define also, for an arbitrary vector space $V$,
\[
S^\lambda V := (V^{\otimes |\lambda|})^{S_\lambda}.
\]
In \cite{ESsym}, we proved that, for $X$ a conical symplectic surface in $\bC^3$
with an isolated singularity,
\begin{equation}\label{e:symm}
  \HP_0(\cO_{S^n X}) \cong \bigoplus_{\lambda \vdash n} S^\lambda \HP_0(\cO_X),
  % \cong \bigoplus_{\lambda \vdash n} S^\lambda (\bC^{\mu_s}).
\end{equation}
and, by \cite{AL}, $\HP_0(\cO_X) \cong \bC^{\mu_s}$ in this case.
% (note that, as we use below, this isomorphism generalizes to the
% case of complete intersection surfaces by \cite[\S 5]{ES-dmlv},
% using \cite{Gre-GMZ}, and to arbitrary complete intersections when
% $\HP_0(\cO_X)$ is replaced by $(\cO_X)_{H(X)}$).
By definition, $\HP_0(\cO_{S^n X}) = (\cO_{S^n X})_{H(S^n X)}$.  As
observed in \cite[Proposition 1.4.1]{ESsym}, this also equals
$(\cO_{S^n X})_{H(X)}$, where $H(X)$ acts on $S^n X$ by the diagonal
action. This allows us to state a generalization of the above equality
when $X$ is an arbitrary complete intersection (and not just for
surfaces):
\begin{question}
  Does the analogue of \eqref{e:symm} hold when $X$ is replaced by an
  arbitrary locally complete intersection with isolated singularities?
  That is, does one have
\begin{equation}\label{e:symm2}
  (\cO_{S^n X})_{H(X)} \cong \bigoplus_{\lambda \vdash n} S^\lambda\bigl( (\cO_X)_{H(X)} \bigr)?
\end{equation}
\end{question}
By Theorem \ref{t:mt-h0-hd}, \eqref{e:symm2} can be rewritten as
\begin{equation}
  (\cO_{S^n X})_{H(X)}  \cong \bigoplus_{\lambda \vdash n}  S^\lambda \bigl(
  H_{\tpl}^{\dim X}(X) \oplus \bigoplus_{s \in X^{\sing}} \bC^{\mu_s}\bigr).
\end{equation}
\subsection{Acknowledgements}
The first author's work was partially supported by the NSF grant
DMS-1000113. The second author was supported by a five-year fellowship
from the American Institute of Mathematics, and the ARRA-funded NSF
grant DMS-0900233. Part of this work was conducted while the second
author was in residence at the Mathematical Science Research Institute
(MSRI) in Berkeley, California, during 2013, supported by NSF Grant
No. 0932078 000.

\section{Derived coinvariants, all de Rham cohomology, and
  $\caD$-modules}\label{s:dmr}
\subsection{$\caD$-module generalization for surfaces}
In order to prove the above results we actually prove a much more
general result, which also computes the \emph{derived coinvariants} of
$\cO_X$ under Hamiltonian flow.  To formulate these, we use
$\caD$-modules. These also have the advantage of being local, so that
the final statements we prove for complete intersections will
immediately imply the generalization to locally complete
intersections.

\subsubsection{The $\caD$-module $M(X)$}
We need to recall the essential construction of \cite{ESdm}.  Let $X$
be an affine Poisson (complex algebraic or complex analytic) variety
and $H(X)$ its Lie algebra of Hamiltonian vector fields.  Let $M(X)$
be the right $\caD$-module defined in \cite{ESdm} representing
invariants under Hamiltonian flow. We briefly recall its definition:
\[
M(X) := H(X) \cdot \caD_X \setminus \caD_X,
\]
where $\caD_X$ is the right $\caD$-module on $X$ representing global
sections, i.e., $\Gamma(X,M) = \Hom(\caD_X, M)$ for all right
$\caD$-modules; this carries a canonical left action by vector fields
on $X$, and $H(X) \cdot \caD_X$ is then a right $\caD$-submodule of
$\caD_X$.

Then, by definition, one has \cite[Proposition 2.13]{ESdm}:
\begin{equation}
\HP_0(\cO_X) \cong \pi_0 M(X),
\end{equation}
where $\pi: X \to \Spec \bC$ is the projection to point and $\pi_0
M(X) = H_0 (\pi_* M(X))$ is the underived pushforward.

Recall the definition of higher Poisson-de Rham homology (for an
arbitrary Poisson variety $X$):
\begin{definition}\cite[Remark 2.27]{ESdm}
  $\HP^{DR}_*(X) := \pi_* M(X)$.
\end{definition}
(As explained in \cite[Remark 2.27]{ESdm}, for $X$ smooth symplectic, this
coincides with the de Rham cohomology of $X$: $\HP^{DR}_*(X) \cong
H_{\tpl}^{\dim X - *}(X)$.)
\begin{remark}
  As explained in \cite{ESdm}, all of the above generalizes to
  arbitrary (not necessarily affine) algebraic or analytic varieties
  $X$, when one replaces $H(X)$ by the \emph{presheaf} $\caH(X)$ of
  Hamiltonian vector fields. We remark that $\caH(X)$ is \emph{not}
  necessarily a sheaf: see Remark \ref{r:flconn} above and
  \cite[Remark 4.5]{ES-dmlv}; this does not cause any trouble in the
  definition of $M(X)$, as explained there.  The reason, briefly, is
  that $\caH(X)$ is $\caD$-localizable (\cite[Corollary
  4.4]{ES-dmlv}), i.e., for every affine open $U \subseteq X$ and open
  $V \subseteq U$, one has $\caH(U)|_V \cdot \caD_V = \caH(V) \cdot
  \caD_V$.  That is, the $\caD$-module on $U$ generated by global
  Hamiltonian vector fields coincides with that generated by local
  Hamiltonian vector fields.  Therefore the quotient $M(U) := \caH(U)
  \cdot \caD_U \setminus \caD_U$ has the property $M(U)|_V = M(V)$.
\end{remark}
\begin{remark}
  In fact, the same $\caD$-localizability property holds for locally
  Hamiltonian vector fields, $\mathcal{LH}(X)$, defined by contracting
  the Poisson bivector with closed rather than exact one-forms,
  cf.~Remark \ref{r:flconn}. However, this defines the same $\caD$-module
  as $M(X)$ since locally Hamiltonian vector fields are in the
  $\cO_X$-linear span of Hamiltonian vector fields (more precisely,
  $LH(U)$ and $H(U)$ have the same $\cO$-saturations for all open
  affine $U \subseteq X$, and hence $LH(U) \cdot \caD_U = H(U) \cdot
  \caD_U$, as explained in \cite[Proposition 3.11]{ES-dmlv}).  We work
  with Hamiltonian vector fields since it is more convenient for us
  and then the definition of $M(X)$ matches that of \cite{ESdm}.

  In the more general situation of Remark \ref{r:flconn} where one is
  equipped with a flat connection on $T_X^2$, one can similarly see
  that the presheaf $\mathcal{LH}(X)$ is $\caD$-localizable and hence
  in this setting we can also allow $X$ to be nonaffine.  All of the
  results below extend to this general setting.
\end{remark}

\subsubsection{Poisson-de Rham homology of (locally) complete
  intersection surfaces}
Now let $X$ be a surface as in \S \ref{ss:gen-lci}. As before, this
includes the cases of an algebraic complete intersection in affine
space or in a Calabi-Yau variety (or the more general example of
Remark \ref{r:ci-bundles}).  We do not need $X$ to be affine.

Then we have the following result which generalizes Theorem
\ref{t:mt-h0-2d}:
\begin{theorem}\label{t:mt-hs-2d}
  $\HP^{DR}_i(X) \cong \begin{cases}
    H_{\tpl}^{2-i}(X), & \text{if $i > 0$}, \\
    H_{\tpl}^2 \oplus \bigoplus_{s \in X^{\sing}} \bC^{\mu_s}, &
    \text{if $i=0$}.
\end{cases}$
\end{theorem}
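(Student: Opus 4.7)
The plan is to compute $\HP^{DR}_*(X)=\pi_* M(X)$ by determining $M(X)$ as a holonomic right $\caD$-module on $X$, exploiting that its construction is local. I would first pin down $M(X)$ on the two strata. On the symplectic locus $X^{\smth}$ the Hamiltonian vector fields span the tangent bundle at every point, which forces $M(X)|_{X^{\smth}}\cong \cO_{X^{\smth}}$ as right $\caD$-modules; consequently the minimal extension of $j^* M(X)$ across $X^{\sing}$ is, by definition, the intersection cohomology $\caD$-module $\IC(X):=j_{!*}\cO_{X^{\smth}}$. At each singular point $s$, the formal and analytic local form of $M(X)$ is already computed in \cite[Cor.\,5.9]{ES-dmlv} (via \cite[Prop.\,5.7(iii)]{Gre-GMZ}), and encodes the Milnor-number contribution $\bC^{\mu_s}$.

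Next I would globalize, identifying $M(X)$ as a specific extension of $\IC(X)$ by skyscraper $\caD$-modules at the singular points. Concretely, the natural morphism $\IC(X)\to M(X)$ arising from the smooth-locus identification should fit into a filtration whose subquotients are $\IC(X)$ and skyscrapers $i_{s*}V_s$ with $\dim V_s=\mu_s$, glued by a nontrivial extension class. The description of $M(X)$ in the abstract as ``a nontrivial extension (on both sides) of $\IC(X)$, maximal on the bottom but not on the top'' is exactly this structural statement: the extension class is forced by the local-at-$s$ data, and critically is \emph{not} split.

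Applying $\pi_*$ produces a long exact sequence (or, for a longer filtration, a spectral sequence) comparing $\pi_* \IC(X)=IH^{2-*}(X)$ with skyscraper contributions $\bC^{\mu_s}$ concentrated in pushforward degree $0$. The connecting morphisms coming from the nontrivial extension class effect precisely the cancellation that converts $IH^*$ into the topological cohomology $H^*_\tpl$ of the smoothing: in positive degree $i>0$ the IC contribution is corrected to $H^{2-i}_\tpl(X)$, and in degree $0$ one obtains $H^2_\tpl(X)\oplus\bigoplus_{s} \bC^{\mu_s}$, matching the Milnor--Hamm description of vanishing cycles as bouquets of $2$-spheres (hence recovering Theorem \ref{t:mt-h0-2d} and Corollary \ref{c:mt-h0-2d-fl}).

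The main obstacle is identifying the extension class of $M(X)$ over $\IC(X)$ precisely: the existence of the filtration is dictated by the local data, but it is the \emph{nontriviality} of the extension that makes the pushforward compute topological rather than intersection cohomology (the discrepancy is non-zero already for cones over genus $g\geq 1$ curves, where $IH^1(X)\neq H^1_\tpl(X)$). Verifying this requires the rigidity of $M(X)$ as the universal object representing Hamiltonian invariants, together with careful local-to-global gluing of the singular-point data; the case of cones over smooth curves of genus $g$, where the extension is to be identified explicitly as the holomorphic half of the maximal extension, serves as the critical test case.
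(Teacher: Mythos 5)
Your overall architecture matches the paper's: determine $M(X)$ as a holonomic $\caD$-module that restricts to the canonical (right) $\caD$-module on $X^{\smth}$ (the paper uses $\Omega_{X^{\smth}}$ rather than $\cO_{X^{\smth}}$, a cosmetic difference on a symplectic surface), recognize that $M(X)$ must be an extension of $\IC(X)$ on both sides by delta-function modules at $X^{\sing}$, identify the maximal quotient supported at $X^{\sing}$ with $\bigoplus_s \bC^{\mu_s}\otimes\delta_s$ via \cite[Cor.~5.9]{ES-dmlv}, and then push forward to a point. Up to that point you are on the paper's track.

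The genuine gap is that you never actually prove the structural statement you need, namely that the canonical map $H^0 j_!\,\Omega_{X^{\smth}} \to M(X)$ is \emph{injective} (equivalently, that the extension of $\IC(X)$ on the bottom inside $M(X)$ is the maximal one, with kernel $K\cong\bigoplus_s H^1(U_s\setminus\{s\})\otimes\delta_s$). You assert that the extension class is ``forced by the local-at-$s$ data'' and by ``rigidity,'' but the local reference you cite only computes the maximal \emph{quotient} of $M(X)$ supported at $s$; it says nothing about which sub-objects supported at $s$ occur, and that is exactly the hard part. The paper's mechanism for closing this gap is quantitative and global along the smoothing: since $M(X_t)\cong\Omega_{X_t}$ for generic $t$, any failure of flatness of the family $M(\mathcal{X})$ is a torsion submodule $\delta_s^r$ at $t=0$, and any kernel of $N\to M(X)$ has dimension $q\ge 0$; comparing $\chi(\pi_*M(X))$ computed from the family ($\mu_s+(-1)^n+r$) with the value computed from the extension structure ($\chi(\pi_*\IC(X))+\mu_s+h^{n-1}(X\setminus\{s\})-q$), and evaluating $\chi(\pi_*\IC(X))$ via the Goresky--MacPherson formula applied to the one-point compactification together with the $(n-2)$-connectivity of the link, forces $q=r=0$ simultaneously. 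Without this (or some substitute) your argument does not distinguish $M(X)$ from, say, $\IC(X)\oplus\bigoplus_s\bC^{\mu_s}\otimes\delta_s$, which would give the wrong answer for $\HP^{DR}_1$ already for the cone over a genus $g\ge 1$ curve. Your final step also needs care: for $i>0$ the skyscraper summands contribute nothing to $\pi_i$, so the positive-degree answer comes entirely from $\pi_i$ of the maximal bottom extension, not from a cancellation via connecting maps against the $\bC^{\mu_s}$ quotient.
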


Restricting to the case where $X$ is a complete intersection in affine
space
% (or by global functions on a not necessarily affine Calabi-Yau
% variety),
there always exists a smoothing $X_t$ of $X$. Then Theorem
\ref{t:mt-hs-2d} is equivalent to the following analogue of Corollary
\ref{c:mt-h0-2d-fl}.  As before, $h^i(X) := \dim H^i(X)$ denotes the
$i$-th Betti number of $X$.
\begin{corollary}\label{c:mt-hs-2d-fl}
  The sheaf of graded vector spaces $\HP^{DR}_*(X_t)$ is a vector
  bundle near $t=0$.  Its generic fiber is $H^{\dim X -
    *}_{\tpl}(X_t)$, and the Hilbert series of the fibers near $t=0$
  is $h(\HP^{DR}_*(X_t);u) = \sum_{m} h^{\dim X - m}(X) u^m + \sum_{s
    \in X^{\sing}} \mu_s$.
\end{corollary}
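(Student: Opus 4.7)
My plan is to derive the corollary from Theorem \ref{t:mt-hs-2d} by matching the graded dimensions of the fiber at $t=0$ with those at generic $t$, using the classical topology of the Milnor fibration to handle the generic case.

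First I would treat generic $t$. For $t \neq 0$ small, $X_t$ is smooth and the relative top polyvector field on $\mathcal{X}$ restricts to a nonvanishing polyvector field on $X_t$, so $X_t$ is symplectic. The remark following the definition of $\HP^{DR}_*$ then gives $\HP^{DR}_i(X_t) \cong H_{\tpl}^{\dim X - i}(X_t)$ for all $i$. Next I would apply the Milnor fibration theorem \cite{Mil-spch, Ham-ltekr} exactly as in \S\ref{sss:smooth}: for each $s \in X^{\sing}$ and $0 < |t| \ll 1$, a small ball in $\mathcal{X}$ about $s$ meets $X_t$ in a space homotopic to a bouquet of $\mu_s$ copies of $S^{\dim X}$. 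A Mayer--Vietoris comparison, replacing the contractible cone neighborhoods of $X^{\sing}$ in $X$ by these bouquets, yields $h^i(X_t) = h^i(X)$ for $i < \dim X$ and $h^{\dim X}(X_t) = h^{\dim X}(X) + \sum_s \mu_s$.

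Combining these, the graded Hilbert series of $\HP^{DR}_*(X_t)$ for generic $t$ is $\sum_m h^{\dim X - m}(X_t)\, u^m = \sum_m h^{\dim X - m}(X)\, u^m + \sum_s \mu_s$, which is exactly the series obtained by summing the dimensions given by Theorem \ref{t:mt-hs-2d} at $t = 0$. Thus the fiber dimensions are constant near $t = 0$, which gives the Hilbert series formula in the statement.

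To upgrade this fiberwise dimension equality to the vector bundle conclusion, I would form the relative version of the construction: $M(\mathcal{X}/\bA^1)$ as a relative $\caD$-module over $\bA^1$, whose pushforward to $\bA^1$ is coherent with fiber at $t$ equal to $\HP^{DR}_*(X_t)$. A coherent sheaf on a smooth curve with constant fiber dimension near a point is locally free there. The main obstacle I anticipate is precisely this last base-change step: justifying that the fibers of the pushforward of the relative $\caD$-module compute the pointwise Poisson--de Rham homologies $\HP^{DR}_*(X_t)$, rather than some derived or hypercohomological variant that could a priori differ at the special fiber. Once this base-change claim is secured, the two dimension counts above immediately force local freeness and pin down the generic fiber as $H^{\dim X - *}_{\tpl}(X_t)$.
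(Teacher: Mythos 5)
Your proposal is correct and follows essentially the same route as the paper: the corollary is stated there as being equivalent to Theorem \ref{t:mt-hs-2d}, obtained exactly as you describe by computing the generic fiber via $M(X_t)\cong\Omega_{X_t}$ for the smooth symplectic fibers and comparing Betti numbers of $X_t$ and $X$ through the Milnor fibration. The base-change issue you flag as the main obstacle is precisely what Theorem \ref{t:mt-dm-2d-fl} (flatness of the family of $\caD$-modules $M(X_t)$ near $t=0$) supplies; it is proved simultaneously with Theorem \ref{t:mt-dm-2d} by the Euler--Poincar\'e characteristic argument of \S \ref{s:mr-dmr-pf}.
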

Note that being a graded vector bundle is merely saying that the
Hilbert series of the fibers is constant.

The same corollary holds in the general case (so $X$ is only
analytically locally a complete intersection, and not necessarily
affine), provided it is equipped with a smoothing $X_t$ (as before one
could also work with a smoothing over a formal disc).

\subsubsection{The $\caD$-module of (locally) complete intersection
  surfaces}\label{ss:dm-2d}
Let $X$ be as in the previous subsection.  Let $j: X^{\smth} \into X$
be the inclusion of the smooth locus into $X$.  Let
$\Omega_{X^{\smth}}$ be the canonical right $\caD$-module of volume
forms on $X^{\smth}$.  Then we can consider the underived pushforward
$H^0 j_! \Omega_{X^{\smth}}$. This is the maximal indecomposable
extension on the bottom (by $\caD$-modules supported on $\Xsing$) of
the intersection cohomology $\caD$-module $\IC(X) := j_{!*}
\Omega_{X^{\smth}}$, where $j_{!*}$ is the intermediate extension.
More precisely, we have the following standard result, for which we
provide a proof for the reader's convenience:
\begin{proposition}
  $N := H^0 j_! \Omega_{X^{\smth}}$ is indecomposable, and is the
  universal extension of the form
\begin{equation}\label{e:js-ext}
0 \to K \to N \to \IC(X) \to 0,
\end{equation}
with $K$ supported on $\Xsing$, in the sense that every other
extension
\begin{equation}\label{e:oth-ext}
  0 \to K' \to N' \to \IC(X) \to 0,
\end{equation}
with $K'$ supported on $\Xsing$, is the pushout of a canonical
morphism $K \to K'$.
\end{proposition}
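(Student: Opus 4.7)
The plan is to establish three properties of $N := H^0 j_! \Omega_{X^\smth}$, with $j : X^\smth \into X$ and $i : X^\sing \into X$ the open and closed embeddings: the short exact sequence \eqref{e:js-ext} with $K$ supported on $X^\sing$, indecomposability of $N$, and the universal pushout property. Everything flows from two standard facts: (a) $\IC(X) = j_{!*} \Omega_{X^\smth}$ is by definition the image of the canonical map $H^0 j_! \Omega_{X^\smth} \to H^0 j_* \Omega_{X^\smth}$, and (b) the adjunction $j_! \dashv j^*$ for the open embedding $j$ yields $\Hom(N, i_* L) = 0$ for every $\caD$-module $L$ on $X^\sing$, i.e.\ $N$ has no nonzero quotient supported on $X^\sing$.

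The short exact sequence is immediate from (a): this produces a surjection $N \onto \IC(X)$, and applying $j^*$ (using $j^* H^0 j_! \Omega_{X^\smth} = \Omega_{X^\smth} = j^* \IC(X)$) shows its kernel $K$ is supported on $X^\sing$. For indecomposability, any splitting $N = N_1 \oplus N_2$ would induce $\Omega_{X^\smth} = j^* N_1 \oplus j^* N_2$; simplicity of $\Omega_{X^\smth}$ on the smooth connected $X^\smth$ (using irreducibility of $X$) forces one summand, say $j^* N_2$, to vanish, whereupon $N_2$ is a quotient of $N$ supported on $X^\sing$ and hence zero by (b). For the universal property, given \eqref{e:oth-ext} with $K'$ supported on $X^\sing$, apply $j^*$ to obtain a canonical iso $\Omega_{X^\smth} \iso j^* N'$; the derived adjunction $\Hom_{\caD_X}(j_! \Omega_{X^\smth}, N') \cong \Hom_{\caD_{X^\smth}}(\Omega_{X^\smth}, j^* N')$, together with the concentration of $N'$ in cohomological degree $0$ and truncation to $H^0$, produces a morphism $\varphi : N \to N'$. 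The composition $N \xrightarrow{\varphi} N' \onto \IC(X)$ restricts on $X^\smth$ to the identity of $\Omega_{X^\smth}$, and any two lifts of $\Id_{\IC(X)}$ along $N' \onto \IC(X)$ differ by a map $N \to K'$, which vanishes by (b). Thus $\varphi$ is the unique lift; its restriction yields a canonical morphism $f : K \to K'$, and a short diagram chase identifies $N'$ with the pushout $N \sqcup_K K'$ along $f$, establishing universality.

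The main obstacle is the derived-categorical setup in the universal property step: one must interpret the adjunction morphism $j_! \Omega_{X^\smth} \to N'$ in the derived category, where $j_! \Omega_{X^\smth}$ may live in multiple cohomological degrees, and then truncate to $H^0$, a step legitimized by $N'$ being concentrated in degree zero. Apart from this, all assertions reduce cleanly to the two standard facts (a) and (b) about $j_!$, $j_*$, and $j_{!*}$ for open embeddings of $\caD$-module categories; no further input specific to surfaces, complete intersections, or Poisson or Hamiltonian structures is required.
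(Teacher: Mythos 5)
Your proof is correct, and it rests on the same technical pillar as the paper's: the observation that $j_! \Omega_{\Xsmth}$ is concentrated in nonpositive cohomological degrees, so that $H^0 \RHom(j_! \Omega_{\Xsmth}, L) \cong \Hom(N, L)$ for any $\caD$-module $L$, combined with the adjunction for the open embedding $j$ (the paper's \eqref{e:h0rhom}) — you correctly identify this truncation step as the one point needing care. Where you diverge is in how the conclusions are extracted. For indecomposability, the paper computes $\End(N) = \bC$ outright (which is slightly stronger), whereas you argue via a hypothetical splitting, restriction to $X^\smth$, and the vanishing $\Hom(N, i_*L) = 0$; both are fine, though note that each tacitly uses connectedness of $X^\smth$ (guaranteed here by normality of $X$). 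For universality, the paper is indirect: it invokes the abstract existence of a universal extension $N'$ (from finiteness of $\Xsing$ and of $\Ext^1(\IC(X),\delta_s)$), produces maps $N' \to N$ and $N \to N'$, and uses $\Hom(N,N) = \bC$ plus indecomposability of $N'$ to force $N = N'$. You instead verify the stated pushout property directly: for an arbitrary extension $N'$ of $\IC(X)$ by $K'$ supported on $\Xsing$, the adjunction/truncation gives a unique lift $\varphi: N \to N'$ of the surjection to $\IC(X)$ (uniqueness because two lifts differ by an element of $\Hom(N,K') = 0$), and the five lemma identifies $N'$ with the pushout of $N$ along $\varphi|_K : K \to K'$. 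Your route is more self-contained — it does not presuppose that a universal extension exists — and it delivers the canonical morphism $K \to K'$ explicitly, which is what the proposition literally asserts; the paper's route buys the extra fact $\End(N) = \bC$ with less diagram-chasing.
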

\begin{proof}
  First of all, by adjunction, we have a canonical morphism in the
  derived category, $j_! \Omega_{\Xsmth} \to \IC(X)$, which becomes
  the identity after applying $j^!$. Since $j_! \Omega_{\Xsmth}$ is a
  complex concentrated in nonpositive (cohomological) degrees, it
  follows that, for all $\caD$-modules $L$,
\begin{equation}\label{e:h0rhom}
  H^0 \RHom(j_! \Omega_{\Xsmth}, L) \cong \Hom(H^0 j_! \Omega_{\Xsmth}, L) = \Hom(N, L).
\end{equation}
In more detail, take the exact triangle $j_! \Omega_{\Xsmth} \to H^0
j_! \Omega_{\Xsmth} \to C$, for $C = \cone(j_! \Omega_{\Xsmth} \to H^0
j_! \Omega_{\Xsmth})$. The cohomology of $C$ is concentrated in
negative degrees. Then the long exact sequence for $\Hom(-,L)$ yields,
for all $\caD$-modules $L$, the exact sequence $0 \to \Hom(H^0 j_!
\Omega_{\Xsmth}, L) \to H^0 \RHom(j_! \Omega_{\Xsmth}, L) \to 0$,
since $H^i(C) = 0$ for $i \geq 0$.
  
Thus, by \eqref{e:h0rhom}, we obtain from adjunction that
\begin{equation}\label{e:hom-nic}
\Hom(N, \IC(X)) = H^0 \RHom(j_! \Omega_{\Xsmth}, \IC(X))
\cong \Hom(\Omega_{\Xsmth}, \Omega_{\Xsmth}) = \bC.
\end{equation}
We therefore obtain the canonical extension \eqref{e:js-ext}.

To see that $N$ is indecomposable, note that, by the same computation
of \eqref{e:hom-nic}, $\Hom(N, N) \cong H^0 \RHom(j_! \Omega_{\Xsmth},
N) \cong \Hom(\Omega_{\Xsmth}, \Omega_{\Xsmth}) = \bC$.

Finally, we show that $N$ is the universal (maximal indecomposable)
extension of $\IC(X)$ by $\caD$-modules supported on $\Xsing$.  If
not, let $N'$ be this extension (it exists since $\Xsing$ is finite
and $\Ext(\IC(X), \delta_s)$ is finite-dimensional for all $s \in
\Xsing$).  By universality, the surjection $N' \to \IC(X)$ factors
through $N \to \IC(X)$, but by adjunction and \eqref{e:h0rhom},
$\Hom(N, N') = \bC$.  Thus we obtain a nonzero composition $N \to N'
\to N$, which must be a nonzero multiple of the identity since
$\Hom(N,N) = \bC$.  Since $N'$ is indecomposable, this implies that
$N=N'$, a contradiction. \qedhere
\end{proof}

We can make \eqref{e:js-ext} more explicit as follows. For every $s
\in X^{\sing}$, let $\delta_s$ denote the $\delta$-function
$\caD$-module at $s$. Then,
\begin{equation} \label{e:k-ext} K \cong \bigoplus_{s \in X^{\sing}}
  \Ext^1(\IC(X), \delta_s)^* \otimes \delta_s.
\end{equation}
This can be written in terms of topological cohomology: for every $s
\in X^{\sing}$, let $U_s$ be a contractible neighborhood of $s$ (whose
existence was proved in \cite{Gil-eavlc}, cf.~also
\cite[2.10]{Mil-spch}), disjoint from $X^\sing \setminus \{s\}$.  Then
by a straightforward generalization of \cite[Lemma 4.3]{ESdm},
\begin{equation}\label{e:ic-ds}
  \Ext^1(\IC(X), \delta_s) \cong H^1(U_s
  \setminus \{s\}).
\end{equation} Thus \eqref{e:k-ext} can be alternatively written as
\begin{equation}\label{e:k-h1}
  K \cong \bigoplus_{s \in X^{\sing}} H^1(U_s \setminus \{s\})  \otimes \delta_s.
\end{equation}

Then Theorems \ref{t:mt-hs-2d} and \ref{t:mt-h0-2d} follow from the
following result.
\begin{theorem}\label{t:mt-dm-2d} 
  There is an exact sequence
  \[
  0 \to H^0 j_! \Omega_{X^{\smth}} \to M(X) \onto \bigoplus_{s \in
    X^{\sing}} \bC^{\mu_s} \otimes \delta_s \to 0.
  \]
\end{theorem}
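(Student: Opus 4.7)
The strategy is to construct the canonical map $\phi \colon H^0 j_! \Omega_{X^{\smth}} \to M(X)$, check its injectivity, and identify its cokernel by reducing to the analytic-local situation at each singular point and applying the results of \cite{ES-dmlv, Gre-GMZ}.

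To construct $\phi$: since $X^{\smth}$ is a symplectic surface, the results of \cite{ESdm} identify $M(X)|_{X^{\smth}} = M(X^{\smth}) \cong \Omega_{X^{\smth}}$, which coincides with the restriction of $H^0 j_! \Omega_{X^{\smth}}$ to $X^{\smth}$. Via the $(j_!, j^*)$ adjunction for the open inclusion $j$, the identity on $\Omega_{X^\smth}$ corresponds to a morphism $j_! \Omega_{X^{\smth}} \to M(X)$ in the derived category; since $M(X)$ lives in cohomological degree zero and the cohomology of $j_! \Omega_{X^{\smth}}$ is concentrated in non-positive degrees, this factors through a canonical $\phi \colon H^0 j_! \Omega_{X^{\smth}} \to M(X)$ whose restriction to $X^{\smth}$ is the identity. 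Consequently, both $\ker\phi$ and $\mathrm{coker}\,\phi$ are $\caD$-modules supported on the finite set $X^{\sing}$, hence direct sums of copies of the $\delta_s$ with some multiplicities to be identified.

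Since the statement is local on $X$, computing these multiplicities reduces to the case where $X$ is a small contractible analytic neighborhood of a single singular point $s \in X^{\sing}$. In this local setting, \cite[Corollary 5.9]{ES-dmlv} (resting on \cite[Proposition 5.7.(iii)]{Gre-GMZ}) yields $\pi_0 M(X) = \HP_0(\cO_X) \cong \bC^{\mu_s}$. Meanwhile, the universal-extension structure of $N := H^0 j_! \Omega_{X^{\smth}}$ given by \eqref{e:js-ext} together with \eqref{e:k-h1} expresses $\pi_* N$ in terms of the intersection cohomology $\pi_* \IC(X)$ and the topological cohomology of the link of $s$ (to which $X^{\smth}$ is homotopy equivalent in the local case). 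Applying $\pi_*$ to the four-term exact sequence
\[
0 \to \ker\phi \to N \to M(X) \to \mathrm{coker}\,\phi \to 0
\]
and using the corresponding long exact sequence, together with the full local Poisson-de Rham computation extending \cite[\S 5]{ES-dmlv} to all of $\pi_* M(X)$, a dimension count in low cohomological degrees forces $\ker\phi = 0$ and $\dim_{\bC} \mathrm{coker}\,\phi = \mu_s$, yielding the desired exact sequence.

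The main obstacle is carrying out the local cross-check: extending the computation of \cite[Corollary 5.9]{ES-dmlv} from $\pi_0 M(X)$ to the full graded Poisson-de Rham homology $\pi_* M(X)$ in the germ at $s$, and matching it with the topological computation of $\pi_* N$ via the Milnor/Hamm theorem on the homotopy type of the Milnor fiber. Once this local alignment is established, the injectivity of $\phi$ and the identification $\mathrm{coker}\,\phi \cong \bC^{\mu_s} \otimes \delta_s$ at each singularity are forced by the long exact sequence, and the global exact sequence follows by $\caD$-module locality.
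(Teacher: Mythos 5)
Your setup matches the paper's: the adjunction morphism $\phi\colon H^0 j_!\Omega_{\Xsmth}\to M(X)$, the observation that $\ker\phi$ and $\operatorname{coker}\phi$ are sums of $\delta_s$'s, the reduction to a contractible germ at a single singular point, and the identification of the maximal quotient of $M(X)$ supported at $s$ with $\delta_s^{\mu_s}$ via \cite[Corollary 5.9]{ES-dmlv}. (You should also justify why $\operatorname{coker}\phi$ \emph{is} that maximal quotient: this is because $N:=H^0 j_!\Omega_{\Xsmth}$ is an indecomposable extension of $\IC(X)$ and hence admits no quotient supported at $s$.)

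The genuine gap is the injectivity of $\phi$. You propose to force $\ker\phi=0$ by a dimension count in the long exact sequence for $\pi_*$, using ``the full local Poisson--de Rham computation extending \cite[\S 5]{ES-dmlv} to all of $\pi_* M(X)$.'' No such computation is available as an input: determining $\pi_* M(X)$ in all degrees is exactly Theorem \ref{t:mt-hs-2d} (resp.\ Corollary \ref{c:mt-hs-hd}), which is deduced \emph{from} Theorem \ref{t:mt-dm-2d}, not used to prove it; so the step you flag as ``the main obstacle'' is the whole content of the theorem and your argument becomes circular. What actually closes the argument is an input your proposal never uses: the smoothing $X_t$. One shows the torsion of the family $M(\mathcal{X})$ over the disc is $\delta_s^r$, whence $\chi(\pi_* M(X)) = \mu_s + (-1)^n + r$, since $\pi_* M(X_t) = H^{n-*}_{\tpl}(X_t)$ and $X_t$ is a bouquet of $\mu_s$ $n$-spheres; on the other hand the structure of $M(X)$ gives $\chi(\pi_* M(X)) = \chi(\pi_*\IC(X)) + \mu_s + h^{n-1}(X\setminus\{s\}) - q$ with $q=\dim\ker\phi$; and the computation $\chi(\pi_*\IC(X)) = (-1)^n - h^{n-1}(\Xsmth)$ (Proposition \ref{p:icfla}, via the Goresky--MacPherson formula on the one-point compactification and the $(n-2)$-connectivity of the link) forces $q=r=0$. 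Without the smoothing and this Euler-characteristic comparison, or a substitute for them, injectivity does not follow from what you have assembled.
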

One advantage of using $\caD$-modules is that this is a local
statement: so the statement above for complete intersections now
immediately implies the generalization to the case where $X$ is only
analytically locally complete intersection as in \S \ref{ss:gen-lci}
(and not necessarily affine).  Indeed, we only need to observe that
the $\caD$-module $M(X)$ does not depend on the choice of $\Xi_X$, up
to isomorphism, since $\Xi_X$ is unique up to multiplication by an
everywhere nonvanishing function.  In more detail, if $f \in
\Gamma(X,\cO_X)$ is everywhere nonvanishing, one has the isomorphism
$M(X,\Xi_X) \to M(X, f \Xi_X)$, which on affine open subsets $U
\subseteq X$ sends the canonical generator $1 \in M(U, \Xi_X|_U)$ to
$f^{-1}$ times the canonical generator of $M(U, f \Xi_X|_U)$.

We can also prove an analogue of Corollaries \ref{c:mt-h0-2d-fl} and
\ref{c:mt-hs-2d-fl} (which we will actually prove simultaneously with
the preceding theorem).  Suppose that $X_t$ is a smoothing of $X$
(over $\bA^1$ or over a formal disc $\Spf \bC[\![t]\!]$); this always
exists if $X$ is a complete intersection.  Let $\mathcal{X}$ be the
total space of the smoothing.
\begin{theorem}\label{t:mt-dm-2d-fl}
  The sheaf over $\bA^1$ of (fiberwise) $\caD$-modules $M(X_t)$ on
  $\mathcal{X}$ is flat near $t=0 \in \bA^1$.  For generic $t \in
  \bA^1$, the fiber at $t$ is $\Omega_{X_t}$.
\end{theorem}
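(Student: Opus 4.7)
The plan is to exhibit a relative $\caD$-module on the total space $\mathcal{X}$ whose fiber over $t\in \bA^1$ recovers $M(X_t)$, then separately identify the generic fiber and verify $\cO_{\bA^1}$-flatness near $t=0$. Since $\mathcal{X}\to \bA^1$ is a flat family of (locally) complete intersections, each fiber $X_t$ inherits a top polyvector field $\Xi_{X_t}$, and these assemble into a relative top polyvector field $\Xi_{\mathcal{X}/\bA^1}$. I would define
\[
M_{\mathcal{X}/\bA^1} := H_{\mathcal{X}/\bA^1} \cdot \caD_{\mathcal{X}/\bA^1} \setminus \caD_{\mathcal{X}/\bA^1},
\]
where $\caD_{\mathcal{X}/\bA^1}$ is the sheaf of relative (vertical) differential operators and $H_{\mathcal{X}/\bA^1}$ is the presheaf of relative Hamiltonian vector fields (contractions of $\Xi_{\mathcal{X}/\bA^1}$ with exact relative $(\dim X-1)$-forms). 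Because the quotient $\caD/(H\cdot\caD)$ is right exact, base change along $\bA^1$ identifies the fiber at $t_0\in\bA^1$ with $M(X_{t_0})$.

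For the generic fiber, $X_{t_0}$ is smooth and $(X_{t_0},\Xi_{X_{t_0}})$ is Calabi-Yau. Nondegeneracy of $\Xi_{X_{t_0}}$ together with the local Poincar\'e lemma realizes any local tangent vector as the contraction of $\Xi_{X_{t_0}}$ with a locally exact $(\dim X-1)$-form, so the left $\caD$-ideal generated by the (locally) Hamiltonian sheaf equals $T_{X_{t_0}}\cdot \caD_{X_{t_0}}$. Consequently $M(X_{t_0})=\caD_{X_{t_0}}/T_{X_{t_0}}\cdot\caD_{X_{t_0}}\cong \Omega_{X_{t_0}}$, giving the second assertion of the theorem. (I use here that, by Remark \ref{r:flconn2}, replacing $H(X_{t_0})$ by its local/sheafified version $LH(X_{t_0})$ does not change $M(X_{t_0})$ up to isomorphism.)

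For flatness near $t=0$, which I would prove simultaneously with Theorem \ref{t:mt-dm-2d}, the statement is local on $\mathcal{X}$, and away from $X_0^\sing$ the family is smooth so $M_{\mathcal{X}/\bA^1}$ agrees with the manifestly $\cO_{\bA^1}$-flat relative volume $\caD$-module $\Omega_{\mathcal{X}/\bA^1}$. It therefore suffices to work in an analytic neighborhood of each singular point $s\in X_0^\sing$, where $\mathcal{X}$ is the smoothing of an isolated complete intersection singularity. Here flatness amounts to showing that $t$ is a non-zerodivisor on $M_{\mathcal{X}/\bA^1}$. Any $t$-torsion would be a nonzero $\caD$-submodule supported on $X_0$ that would enlarge the special fiber $M(X_0)$ beyond the prescription of Theorem \ref{t:mt-dm-2d}. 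The required dimension matching, that rules out such extra submodules, is provided by Corollary \ref{c:mt-hs-2d-fl}: the Hilbert series $h(\HP^{DR}_*(X_t);u)$ is constant in $t$ near $0$, because by \S \ref{sss:smooth} one has $h^{\dim X}(X_t)=h^{\dim X}(X_0)+\sum_s \mu_s$ while $h^i(X_t)=h^i(X_0)$ for $i<\dim X$ (the vanishing cycles living in top degree).

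The main obstacle I anticipate is precisely the last step: upgrading constancy of fiberwise Hilbert series of the pushforward $\pi_* M_{\mathcal{X}/\bA^1}$ to genuine $\cO_{\bA^1}$-flatness of $M_{\mathcal{X}/\bA^1}$ itself near each $s\in X_0^\sing$. I would address this by giving an explicit presentation of $M_{\mathcal{X}/\bA^1}$ in an analytic neighborhood of $s$ as a holonomic $\caD$-module built from a Koszul-type resolution in the defining equations $f_1,\ldots,f_{k-1},f_k-t$ of the smoothing, and then verifying directly that $t$ acts by a non-zerodivisor on this explicit model. This step would draw on the local analysis of Milnor fibrations for isolated complete intersection singularities and on the computations of \cite{Gre-GMZ} and \cite{ES-dmlv} that enter the proof of Theorem \ref{t:mt-dm-2d}.
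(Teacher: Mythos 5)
The identification of the generic fiber with $\Omega_{X_t}$ is fine and matches the paper (which cites \cite[Example 2.37]{ES-dmlv} for smooth Calabi--Yau $X_t$), and your reduction of flatness to torsion-freeness over the disc, with the torsion necessarily a direct sum of $\delta_s$'s supported at the singular points, is exactly the paper's setup. The problem is the step that is supposed to rule the torsion out: you invoke Corollary \ref{c:mt-hs-2d-fl} (constancy of the Hilbert series of $\HP^{DR}_*(X_t)$ near $t=0$). In the paper's logical structure that corollary is a \emph{consequence} of Theorems \ref{t:mt-hs-2d} and \ref{t:mt-dm-2d}, which are themselves proved simultaneously with the flatness statement you are trying to establish; the Milnor-fiber identity $h^{\dim X}(X_t)=h^{\dim X}(X_0)+\sum_s\mu_s$ tells you what the Hilbert series of the generic fiber is, but it says nothing a priori about $\HP^{DR}_*(X_0)$, which is the whole content of the theorem. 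So the argument as written is circular.

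The missing ingredient is an \emph{independent} computation on the special fiber that pins down $\chi(\pi_* M(X_0))$. The paper gets this from two inputs: (i) the structure theory of $M(X)$ near an isolated singularity, namely the exact sequence $N \to M(X)\to K'\to 0$ with $N=H^0 j_!\Omega_{X^\smth}$ (an extension of $\IC(X)$ by $H^{n-1}(X\setminus\{s\})\otimes\delta_s$) and $K'\cong\delta_s^{\mu_s}$ the maximal quotient supported at $s$ (from \cite[Corollary 5.9]{ES-dmlv} via \cite{Gre-GMZ}); and (ii) Proposition \ref{p:icfla}, the formula $\chi(\pi_*\IC(X))=(-1)^n-h^{n-1}(\Xsmth)$, proved by applying the Goresky--MacPherson formula to the one-point compactification of a contractible neighborhood, Mayer--Vietoris, and the $(n-2)$-connectedness of the link of an isolated complete intersection singularity. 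Comparing the resulting value $\chi(\pi_* M(X))=(-1)^n+\mu_s-q$ (where $q$ is the rank of the kernel of $N\to M(X)$) with the value $\mu_s+(-1)^n+r$ forced by flatness of $M(\mathcal X)/M(\mathcal X)_{\tor}$ gives $q=r=0$ in one stroke, proving Theorems \ref{t:mt-dm-2d} and \ref{t:mt-dm-2d-fl} together. Your proposed fallback --- an explicit Koszul-type presentation of $M_{\mathcal X/\bA^1}$ on which one checks directly that $t$ is a non-zerodivisor --- is not carried out, is not how the paper proceeds, and there is no evidence offered that such a presentation is tractable; without step (ii) or a substitute for it, the proof does not close.
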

In the general case where $X$ is only locally a complete intersection,
we can apply the theorem to the open complex neighborhoods $U_s$ of
each $s \in X^{\sing}$ which are analytically complete intersections.
This is an important ingredient in the proof of Theorem
\ref{t:mt-dm-2d} itself.

\subsection{$\caD$-modules on higher dimensional
  varieties}\label{ss:dm-hd}
Theorems \ref{t:mt-hs-2d} and \ref{t:mt-dm-2d} also carry over
verbatim to the higher-dimensional setting with $X$ a ``degenerate
Calabi-Yau'' analytically locally complete intersection with finite
degeneracy locus.  Precisely, we let $X$ be a not necessarily affine
analytically locally complete intersection of dimension $\geq 2$
equipped with a top polyvector field $\Xi_X$ vanishing at only
finitely many points, i.e., $X$ has only isolated singularities and
$\Xi_X$ vanishes only at the singular locus.

Then, we prove Theorem \ref{t:mt-dm-2d} in this general context
(stated in exactly the same way), and this implies all of the
preceding theorems.  Note that, in this case, the maximal extension
$H^0 j_! \Omega_{X^{\smth}}$ of $IC(X)$ (by $\caD$-modules supported
on $X^\sing$) is an extension by
\begin{equation}\label{e:max-ext}
  K \cong \bigoplus_{s \in X^{\sing}} \Ext^1(IC(X), \delta_s)^* \otimes \delta_s \cong
  \bigoplus_{s \in X^{\sing}} H^{\dim X - 1}(U_s \setminus \{s\}) \otimes \delta_s,
\end{equation}
where $U_s$ is a contractible neighborhood of $s$ for every $s \in
X^{\sing}$, disjoint from $X^\sing \setminus \{s\}$.

The analogue of Theorem \ref{t:mt-hs-2d}, which follows from the
above, is as follows. Let $\pi: X \to \Spec \bC$ be the projection to
a point.
\begin{corollary}\label{c:mt-hs-hd} 
  $\pi_i M(X) \cong \begin{cases}
    H^{\dim X - i}_{\tpl}(X), & \text{if $i > 0$}, \\
    H^{\dim X}_{\tpl}(X) \oplus \bigoplus_{s \in X^{\sing}}
    \bC^{\mu_s}, & \text{if $i = 0$}.
\end{cases}$
\end{corollary}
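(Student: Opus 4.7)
The proof is a direct deduction from (the higher-dimensional analogue of) Theorem \ref{t:mt-dm-2d}. I would apply the pushforward functor $\pi_*$ to the short exact sequence
$$0 \to H^0 j_! \Omega_{X^\smth} \to M(X) \to \bigoplus_{s \in X^\sing} \bC^{\mu_s} \otimes \delta_s \to 0$$
provided by that theorem. Since each $\delta_s$ is a $\caD$-module supported at a point, $\pi_*(\delta_s) \cong \bC$ is concentrated in cohomological degree zero; hence $\pi_i(\bigoplus_s \bC^{\mu_s} \otimes \delta_s) = \bigoplus_s \bC^{\mu_s}$ for $i=0$ and vanishes otherwise. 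The resulting long exact sequence therefore gives an isomorphism $\pi_i M(X) \cong \pi_i(H^0 j_! \Omega_{X^\smth})$ for $i > 0$ and, at $i = 0$, a short exact sequence $0 \to \pi_0(H^0 j_! \Omega_{X^\smth}) \to \pi_0 M(X) \to \bigoplus_s \bC^{\mu_s} \to 0$ which splits automatically as an extension of vector spaces.

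Thus the whole corollary reduces to establishing $\pi_i(H^0 j_! \Omega_{X^\smth}) \cong H^{\dim X - i}_\tpl(X)$ for every $i \geq 0$, i.e., $\pi_* H^0 j_! \Omega_{X^\smth} \cong H^*_\tpl(X)[\dim X]$. I would prove this by applying $\pi_*$ to the auxiliary short exact sequence
$$0 \to \bigoplus_{s \in X^\sing} H^{\dim X - 1}(U_s \setminus \{s\}) \otimes \delta_s \to H^0 j_! \Omega_{X^\smth} \to \IC(X) \to 0$$
from \eqref{e:max-ext}. The pushforward of the left-most term $K$ is $\bigoplus_s H^{\dim X - 1}(U_s \setminus \{s\})$ concentrated in degree zero, and the pushforward of $\IC(X)$ is the intersection cohomology $IH^{\dim X - \bullet}(X)$. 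For varieties with isolated singularities, intersection cohomology and ordinary cohomology agree outside a narrow range of degrees, and in that range they are related through restriction to links. The long exact sequence will then force the connecting homomorphism $\pi_\bullet \IC(X) \to \pi_{\bullet-1} K$ to realize exactly this defect, matching $\pi_* H^0 j_! \Omega_{X^\smth}$ with $H^*_\tpl(X)[\dim X]$.

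The main anticipated obstacle is the identification of that connecting homomorphism as precisely the link-restriction map converting intersection cohomology back into ordinary cohomology. A clean way to bypass it is to realize $H^0 j_! \Omega_{X^\smth}$ via Riemann--Hilbert as the perverse extension ${}^p j_!(\bC_{X^\smth}[\dim X])$, characterized as the unique perverse sheaf on $X$ with restriction $\bC_{X^\smth}[\dim X]$ on $X^\smth$ and no perverse subobject supported on $X^\sing$. Under this realization $\pi_*$ becomes $R\Gamma$, and the desired identity $R\Gamma(X, {}^p j_!(\bC_{X^\smth}[\dim X])) \cong H^*_\tpl(X)[\dim X]$ can be checked by comparing with $R\Gamma(X, \bC_X[\dim X])$ via the distinguished triangle of perverse truncations, with the error term supported at $X^\sing$ and contributing trivially to global cohomology in the relevant range.
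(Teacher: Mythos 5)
Your opening reduction is exactly right and matches what the paper leaves implicit: pushing $\pi_*$ through the exact sequence of Theorem \ref{t:mt-dm-2d}, using $\pi_*\delta_s=\bC$ in degree zero, reduces everything to the single claim $\pi_i\bigl(H^0 j_!\Omega_{\Xsmth}\bigr)\cong H^{\dim X-i}_{\tpl}(X)$. The gap is that neither of your two routes to that claim closes, and both stall at the same spot: nowhere do you invoke the complete intersection hypothesis, and without it the claim is false. The essential topological input --- which the paper uses explicitly in step \eqref{ic-step} of \S\ref{ss:mr-proof-outline} and in the last step of the proof of Proposition \ref{p:icfla} --- is that the link $L_s$ of each singular point is $(n-2)$-connected for $n=\dim X$ (\cite{Mil-spch}, \cite[Korollar 1.3]{Ham-ltekr}). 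In your first route this is precisely what is needed to control the connecting map $\pi_\bullet\IC(X)\to\pi_{\bullet-1}K$; you name it as the ``anticipated obstacle'' and then do not resolve it. In your second route the assertion that the error term ``contributes trivially in the relevant range'' is not true in general: the error term ${}^p\tau_{\leq-1}\bigl(j_!\bC_{\Xsmth}[n]\bigr)$ is the skyscraper complex $\bigoplus_{i=0}^{n-2}H^i(L_s)\otimes\delta_s[n-1-i]$, which contributes to $R\Gamma$ in cohomological degrees $-1,\ldots,-(n-1)$, i.e.\ to $\pi_1,\ldots,\pi_{n-1}$ --- squarely inside the range you need. Only the $(n-2)$-connectivity of $L_s$ kills all of it except the degree $-(n-1)$ piece, which then cancels against the skyscraper $\bigoplus_s\bC_s[n]$ separating $j_!\bC_{\Xsmth}[n]$ from $\bC_X[n]$. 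For a non-l.c.i.\ isolated singularity (e.g.\ the cone over the Segre embedding of $\bP^1\times\bP^2$, whose link has nonzero $H^2$) the identity $\pi_*\,{}^pj_!(\bC_{\Xsmth}[n])\cong H^{n-*}_{\tpl}(X)$ genuinely fails.

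Two further corrections. Your characterization of ${}^pj_!(\bC_{\Xsmth}[n])$ is misstated: it is the perverse extension with no nonzero perverse \emph{quotient} supported on $\Xsing$ (it has a large \emph{sub}, namely $K$), and even that condition alone is not characterizing, since $\IC(X)$ also has no such quotient; one must add the stalk condition $i_s^*(-)\in{}^pD^{\leq-1}$. Once the l.c.i.\ hypothesis is brought in, however, your second route becomes very clean and I would recommend restructuring around it: $(n-2)$-connectivity of the links is equivalent to $\bC_X[n]$ being perverse, and since $\Hom(\bC_X[n],\delta_s)=0$ and $i_s^*\bC_X[n]=\bC[n]$ sits in degree $-n\leq-2$, one gets ${}^pj_!(\bC_{\Xsmth}[n])\cong\bC_X[n]$ on the nose, whence $\pi_i\bigl(H^0j_!\Omega_{\Xsmth}\bigr)\cong H^{n-i}_{\tpl}(X)$ with no connecting homomorphisms left to analyze. (In the non-affine case one should also note that $\pi_{-1}\bigl(H^0j_!\Omega_{\Xsmth}\bigr)$ need not vanish, so the degree-zero sequence whose splitting you call automatic requires a word of justification before it is a short exact sequence at all.)
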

Equivalently, if $X_t$ is a smoothing of $X$ (which exists at least
when $X$ is a global complete intersection; as before we could work
with a smoothing over a formal disc or over $\bA^1$), this becomes the
following (with $h^i(X)=\dim H^i(X)$ again the $i$-th Betti number of
$X$):
\begin{corollary}\label{c:mt-hs-hd-fl} 
  The sheaf of graded vector spaces $\pi_* M(X_t)$ on $\bA^1$ is a
  graded vector bundle near $t=0$ whose fibers have Hilbert series
  $h(\pi_* M(X_t), u) = \sum_m h^{\dim X - m}(X) u^m + \sum_{s \in
    X^{\sing}} \mu_s$.  The generic fiber is $H_{\tpl}^{\dim X -
    *}(X_t)$.
\end{corollary}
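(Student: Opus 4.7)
The plan is to deduce Corollary \ref{c:mt-hs-hd-fl} from three ingredients: the higher-dimensional analogue of Theorem \ref{t:mt-dm-2d-fl} (flatness of the family $M(X_t)$ near $t=0$), the identification of the fiber of $M(X_t)$ as a $\caD$-module at generic and special values of $t$, and the classical Milnor fibration computation of the cohomology of a smoothing. Assuming the higher-dimensional Theorem \ref{t:mt-dm-2d-fl} and Corollary \ref{c:mt-hs-hd} stated just above it, the remaining work is essentially bookkeeping.

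First, I would invoke the higher-dimensional version of Theorem \ref{t:mt-dm-2d-fl}: the relative $\caD$-module $\cM$ on $\mathcal{X}$, whose restriction to each fiber is $M(X_t)$, is flat over $\bA^1$ near $t=0$, with generic fiber $\Omega_{X_t}$. Consider $\bar{\pi}: \mathcal{X} \to \bA^1$ and form $\bar{\pi}_* \cM$. By flatness of $\cM$ over $\bA^1$ together with proper base change for $\caD$-module pushforward (which applies since a smoothing can be assumed proper over its base after restricting to a small neighborhood of each singular point, or by working locally around each $s \in X^\sing$ and using the global cohomology of a fixed deformation retract of $\mathcal{X}$ to $X$), the fiber of $\bar{\pi}_* \cM$ at each $t$ is identified with $\pi_*(M(X_t))$.

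Next, I would compute the graded Hilbert series of the fibers at both ends. For generic $t$, the variety $X_t$ is smooth and Calabi-Yau, so $M(X_t) \cong \Omega_{X_t}$, giving $\pi_* M(X_t) \cong H_{\tpl}^{\dim X - *}(X_t)$. Now by the Milnor fibration theorem (as recalled in \S \ref{sss:smooth}, via \cite{Mil-spch, Ham-ltekr}), for $0 < |t| \ll 1$ the local Milnor fiber at each $s \in X^\sing$ is homotopy equivalent to a bouquet of $\mu_s$ spheres of dimension $\dim X$. Combining this with a Mayer--Vietoris argument against contractible neighborhoods $U_s$ (whose intersection with $X_t$ is the Milnor fiber), one obtains $h^{\dim X}(X_t) = h^{\dim X}(X) + \sum_s \mu_s$ and $h^i(X_t) = h^i(X)$ for $i < \dim X$. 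Hence the generic-$t$ Hilbert series of $\pi_* M(X_t)$ equals $\sum_m h^{\dim X - m}(X) u^m + \sum_{s \in X^\sing} \mu_s$. At $t=0$, by Corollary \ref{c:mt-hs-hd}, the Hilbert series of $\pi_* M(X)$ is exactly the same expression.

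Finally, I would conclude: since the fiberwise Hilbert series of $\bar{\pi}_* \cM$ is constant (and finite) in each degree, and since the family is flat, the coherent sheaf $\bar{\pi}_* \cM$ on $\bA^1$ (in each cohomological degree) has constant fiber dimension and is therefore a vector bundle near $t=0$. The identification of the generic fiber with $H_{\tpl}^{\dim X - *}(X_t)$ was established in the second step. The main obstacle is ensuring that the flatness in the higher-dimensional Theorem \ref{t:mt-dm-2d-fl} genuinely transfers to constancy of dimension of the derived pushforward; this requires either a base-change/semicontinuity argument adapted to the relative $\caD$-module setting, or working locally around each singular point via the contractible analytic neighborhoods $U_s$ (where the Milnor fibration gives precise control over the local contribution to both sides) and then globalizing by Mayer--Vietoris against the pushforward from $\Xsmth$, whose fiberwise behavior is already governed by the classical symplectic/Calabi-Yau computation.
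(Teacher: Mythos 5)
Your proposal matches the paper's route: Corollary \ref{c:mt-hs-hd-fl} is deduced there as the smoothing-family restatement of Corollary \ref{c:mt-hs-hd} (itself a consequence of the higher-dimensional Theorem \ref{t:mt-dm-2d}), combined with Theorem \ref{t:mt-dm-2d-fl} for flatness and the Milnor-fibration computation of $h^i(X_t)$ --- exactly the three ingredients you use. The base-change concern you flag at the end is absorbed in the paper into the Euler-characteristic/torsion analysis of the family $M(\mathcal{X})$ carried out in the proof of Theorems \ref{t:mt-dm-2d} and \ref{t:mt-dm-2d-fl}, so no separate argument is required.
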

Note that being a graded vector bundle is the same as saying that the
Hilbert series of the fibers is constant.

Finally, we also prove Theorem \ref{t:mt-dm-2d-fl} in this context,
which goes through verbatim.

\subsection{Outline of the proof}\label{ss:mr-proof-outline}
We outline the proof of Theorems \ref{t:mt-dm-2d} and
\ref{t:mt-dm-2d-fl} (which imply all of the results from \S \ref{s:mr}
and \S \ref{s:dmr}) in the original situation of $X$ a complete
intersection surface in affine space. For details, see \S
\ref{s:mr-dmr-pf}.
\begin{enumerate}
\item We exploit the smoothing $X_t$, where generically $X_t$ is
  smooth and hence $M(X_t) = \Omega_{X_t}$ (by \cite[Example
  2.6]{ESdm}, since then $X_t$ is a symplectic surface).
\item We exploit the structure theory of $M(X)$: it must be an
  extension of $IC(X)$ on (possibly) both sides by delta-function
  $\caD$-modules at $X^{\sing}$.  We use that the maximal extension
  $H^0 j_!  \Omega_{X^{\smth}}$ is an extension by $K$ as in \eqref{e:k-h1}.
\item By \cite[\S 5]{ES-dmlv}, the maximal quotient supported at
  $X^{\sing}$ is $\bigoplus_{s \in X^{\sing}} \HP_0(\hat \cO_{X,s})
  \otimes \delta_s$, and $\dim \HP_0(\hat \cO_{X,s}) = \mu_s$.
\item Now, 
  $M(X_t)$ is flat
  near $t=0$ if and only if there is no torsion at $t=0$; this torsion
  would have to be supported at $X^{\sing}$.
\item \label{step-euler} We take the Euler-Poincar\'e characteristic
  of $\pi_* M(X_t)$ (for all $t$).  Since the torsion of the family
  $M(X_t)$ is a direct sum of delta-function $\caD$-modules, the
  Euler-Poincar\'e characteristic of $\pi_* M(X_t)$ can only increase
  at $t=0$, and it increases if and only if the family is not flat
  (i.e., $\pi_* M(X_t)$ is not flat viewed as a coherent sheaf on
  $\bA^1$).
\item \label{ic-step} Using the classical formula \cite[\S
  6.1]{GM-iht} for $\pi_* \IC(\bar X)$ for compact $\bar X$ (applied
  to the one-point compactification of a contractible neighborhood of
  each isolated singularity), we compute the Euler-Poincar\'e
  characteristic of $\pi_* \IC(X)$ (Proposition \ref{p:icfla}).
\item This formula shows that the Euler-Poincar\'e characteristic of
  $\pi_* M(X_t)$ cannot increase at $t=0$, so the family is flat,
  proving Theorem \ref{t:mt-dm-2d-fl}.  The formula simultaneously
  shows that $M(X)$ maximally extends $\IC(X)$ on the bottom
  (otherwise the Euler-Poincar\'e characteristic of $\pi_* M(X_t)$
  would go down at $t=0$, which is impossible), which proves Theorem
  \ref{t:mt-dm-2d}.
\end{enumerate}
The exact same outline above applies to the situation where $X$ is a
complete intersection of arbitrary dimension, except that $X_t$,
rather than being a symplectic surface, is a smooth Calabi-Yau variety
for generic $t$, so $M(X_t) \cong \Omega_{X_t}$ still holds by
\cite[Example 2.37]{ES-dmlv}.  The proof is then the same, but we have
to use in step \eqref{ic-step} the fact that, for an analytically
locally complete intersection of dimension $n$ with an isolated
singularity, the link of the singularity is $(n-2)$-connected
\cite{Mil-spch},\cite[Korollar 1.3]{Ham-ltekr}.

Once we have the results for complete intersections, as observed, we
immediately obtain them for locally complete intersections, since
Theorems \ref{t:mt-dm-2d} and \ref{t:mt-dm-2d-fl} are local. In fact,
we prove the theorem in the local setting, assuming $X$ is
contractible with a single isolated singularity, which implies the
global result.
\section{The full structure of $M(X)$ and top log differential forms
  on resolutions}\label{s:mmax}
As before we assume that $X$ is an analytically locally complete
intersection equipped with a top polyvector field $\Xi_X$ which is
nondegenerate on $X^\smth$ and the dimension of $X$ is at least two.
Theorem \ref{t:mt-dm-2d} implies that $M(X)$ is a direct sum of
delta-function $\caD$-modules together with one indecomposable
extension by $H^0 j_!  \Omega_{X^\smth}$ of delta-function
$\caD$-modules supported on $X^\sing$.  Let $M_{\max}(X)$ denote this
indecomposable extension.

To fully describe the structure of $M(X)$ and $M_{\max}(X)$, in view
of the exact sequence of Theorem \ref{t:mt-dm-2d}, we only need to
describe how much of the quotient $\bigoplus_s \bC^{\mu_s} \otimes
\delta_s$ therein splits off of $M(X)$, and how much is in the image
of $M_{\max}(X)$.  That is, it remains only to compute the quotient
$M_{\max}(X) / H^0 j_!  \Omega_{X^\smth}$, i.e., the maximal quotient
of $M_{\max}(X)$ supported on $X^\sing$.

In this section, in Theorem \ref{t:maxind-con} and Proposition
\ref{p:maxind-qt}, we do this when $X$ is conical, or more generally
locally conical near all the singular points.  For the general case,
we state Conjecture \ref{c:max-ext}, which we show follows from the
aforementioned results in the locally conical case in Proposition
\ref{p:conj-con}.  The proofs of Theorem \ref{t:maxind-con} and
Proposition \ref{p:maxind-qt} are postponed to \S \ref{s:mmax-pf}.

Here and below, a \emph{conical} variety $X$ means an affine variety
with a contracting $\bC^\times$ action to a fixed point. In the
algebraic setting this means that $\cO_X$ is nonnegatively graded (by
what we will call the \emph{weight} grading) with $\bC$ in weight
zero; in the analytic setting $\cO_X$ is a completion of the direct
sum of its homogeneous components, which are in nonnegative weights
and with $\bC$ in weight zero.  We will also use the term ``weight''
in general to refer to the grading induced by a $\bC^\times$ action. 

\subsection{The case of a $\bC^\times$-action}\label{s:con}
Suppose that $X$ is equipped with a $\bC^\times$ action, e.g., $X$ is
conical (and hence affine).
Moreover, assume that $\Xi_X$ is homogeneous of some weight $d$. Then,
the inverse volume form on $X^\smth$ is homogeneous of weight
$-d$. Next, $j^* M(X) = M_{X^\smth} \cong \Omega_{X^\smth}$, and the
 volume form is a distributional solution of $j^* M(X)$. Therefore,
for every affine open subset $U \subseteq X^\smth$, the canonical
generator $1 \in M_{U}$ is annihilated by the operator $\Eu - d$ (as
the right action of $\Eu$ on a distribution $\phi$ of weight $m$ is $\phi
\cdot \Eu = -m \phi$; alternatively, on volume forms $\Eu$ acts by
$-L_{\Eu}$, which gives the same answer).

Now, $M(X^{\smth})$ is weakly equivariant with respect to the
$\bC^\times$ action.  Since the canonical generator on every affine
open has weight $d$, this implies that $M(X^{\smth})$ is homogeneous
of weight $d$ as a weakly $\bC^\times$-equivariant $\caD$-module (the
difference of the two canonical actions of the Lie algebra $\bC \cdot
\Eu$ of $\bC^\times$ is the character $\Eu \mapsto d$).  Therefore,
$H^0 j_! M(X^{\smth}) \cong H^0 j_! \Omega_{X^\smth}$ is also weakly
equivariant of weight $d$, and this is the structure such that the
canonical morphism $H^0 j_! M(X^\smth) \to M(X)$ is a morphism of weakly
equivariant $\caD$-modules.

Decompose $M(X)$ as $M(X) = \bigoplus_{m \in M} M(X)_m$, with $M(X)_m$
the part of weight $m$. The morphism $H^0 j_! M(X^\smth) \to M(X)$
lands entirely in weight $d$.  Also, $j^* M(X)_m = 0$ for $m \neq d$
(which also follows directly because the canonical morphism $j^*
M(X)_m \to M(X^\smth)_m = 0$ is zero). So, for $m \neq d$, $M(X)_m$ is
a direct sum of delta function $\caD$-modules on points of the finite
set $X^\sing$, whereas at $m=d$, the extension of Theorem
\ref{t:mt-dm-2d} splits as follows:

\begin{equation}\label{e:mxd-ext}
0 \to H^0 j_! \Omega_{X^\smth} \to M(X)_{d} \to \bigoplus_{s \in X^\sing} E_s \to 0,
\end{equation}
where $E_s$ is the weight $d$ component of $\bC^{\mu_s} \otimes
\delta_s$ in Theorem \ref{t:mt-dm-2d}.
\begin{theorem}\label{t:maxind-con}
  Suppose that $s \in X^\sing$ is an attracting or repelling fixed
  point of the $\bC^\times$-action.  Let $Y := X^\smth \cup \{s\}$,
  and $j_Y: Y^\smth=X^\smth \into Y$ be the inclusion.  Then, the
  extension
\begin{equation}\label{e:mxd-ext-1pt}
0 \to H^0 (j_{Y})_! \Omega_{Y^\smth} \to M(Y)_{d} \to E_s \to 0
\end{equation}
is indecomposable.  In particular, if all of the points $s \in
X^\sing$ are attracting or repelling fixed points (e.g., $X$ is
conical with $X^\sing = \{s\}$), then the extension \eqref{e:mxd-ext}
is indecomposable.
\end{theorem}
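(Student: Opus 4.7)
The plan is to show that the extension \eqref{e:mxd-ext-1pt} is indecomposable by establishing the equality $T = K$, where $T := \Gamma_{\{s\}} M(Y)_d$ is the maximal submodule of $M(Y)_d$ supported at $\{s\}$ and $K := \ker\bigl(H^0 (j_Y)_! \Omega_{Y^\smth} \to \IC(Y)\bigr)$.

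First, I would verify that $T = K$ implies indecomposability of $M(Y)_d$. Suppose for contradiction $M(Y)_d = A \oplus B$ with $B \neq 0$. Since $j_Y^* M(Y)_d = \Omega_{Y^\smth}$ is simple, we may assume $j_Y^* B = 0$, so $B$ is supported at $\{s\}$. The adjunction computation behind \eqref{e:h0rhom} gives $\Hom(H^0(j_Y)_!\Omega_{Y^\smth}, B) = \Hom(\Omega_{Y^\smth}, j_Y^* B) = 0$, so the projection $M(Y)_d \twoheadrightarrow B$ kills $H^0(j_Y)_!\Omega_{Y^\smth}$; hence $H^0(j_Y)_!\Omega_{Y^\smth} \subseteq A$ and in particular $B \cap H^0(j_Y)_!\Omega_{Y^\smth} = 0$. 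Combined with $B \subseteq T$, this gives $B \subseteq T$ and $B \cap K = 0$, so $T = K$ would force $B \subseteq K$ and then $B = B \cap K = 0$, a contradiction.

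Second, and this is the heart of the matter, I would establish $T = K$ using the attracting/repelling $\bC^\times$-structure at $s$. Since $M(Y) = \caD_Y / (H(Y) \cdot \caD_Y)$ with canonical generator $1$ of weight $d$, one has $M(Y)_d = 1 \cdot (\caD_Y)_{\mathrm{wt}=0}$, so any element of $T$ has the form $1 \cdot D$ with $D$ weight-zero and $1 \cdot D$ supported at $\{s\}$. The plan is to combine this with the surjection of Theorem \ref{t:mt-dm-2d}, whose weight-$d$ piece $E_s$ identifies under Remark \ref{r:t-mt-h0-hd-qh} with the weight-$d$ part of the Jacobian ring $\cO_{Y,s}/J_s$ (tensored with $\delta_s$), in order to show that the natural map $T \hookrightarrow M(Y)_d \twoheadrightarrow E_s$ vanishes. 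Informally, the attracting fixed point structure pins every equivariant weight on a distributional generator at $s$ to a specific eigenvalue of $\Eu$, and the only Hamiltonian-invariant distributions with weight $d$ supported at $s$ that survive this constraint are those already accounted for by the link cohomology class $K \cong H^{\dim X - 1}(U_s \setminus \{s\}) \otimes \delta_s$ of \eqref{e:max-ext}.

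The main obstacle is the second step: one must rule out any ``extra'' $\delta_s$-summand inside $M(Y)_d$ not already in $H^0(j_Y)_!\Omega_{Y^\smth}$. I expect the technical heart of the argument to be an explicit equivariant comparison between $\Ext^1(\delta_s, \IC(Y))$ and $\Ext^1(\delta_s, H^0(j_Y)_!\Omega_{Y^\smth})$ in weight $d$, using a Koszul-type or local cohomology resolution of $\delta_s$ together with the matching of weights coming from the single attracting/repelling fixed point. The attracting/repelling hypothesis is crucial here: in its absence, additional fixed points could contribute extra equivariant contributions to $T$, breaking the equality $T = K$ and allowing the extension to partially split.
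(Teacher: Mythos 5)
Your first step --- reducing indecomposability of $M(Y)_d$ to the statement that the maximal submodule $T$ supported at $\{s\}$ equals $K = \ker(H^0 (j_Y)_!\Omega_{Y^\smth} \to \IC(Y))$, i.e.\ that no $\delta_s$ splits off --- is correct and is essentially the same setup as the paper's (which assumes for contradiction that $M(X)_d = M(X)_d' \oplus \delta_0$). The problem is that your second step, which you yourself identify as the heart of the matter, contains no argument: you say you ``expect'' an equivariant comparison of $\Ext^1(\delta_s, \IC(Y))$ with $\Ext^1(\delta_s, H^0(j_Y)_!\Omega_{Y^\smth})$ via a Koszul-type resolution to do the job, but computing these $\Ext^1$ groups cannot decide the question. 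Both groups can be nonzero whether or not the extension splits; what must be shown is that the \emph{particular} extension class defining $M(Y)_d$ over $E_s$ is nonzero on every $\delta_s$-summand of $E_s$, and for that one needs an explicit witness to non-splitting. Nothing in your sketch produces one, and the heuristic ``weight-pinning'' argument you offer would, if anything, suggest the opposite: weight considerations alone show only that the extension \eqref{e:mxd-ext} lives entirely in weight $d$, not that it is nontrivial there.

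The paper's witness is analytic and is the entire content of \S\S \ref{s:mmax-pf-2d}--\ref{s:mmax-pf}: for each $Q \in (\cO_X)_d$ one constructs a regularized distributional solution $\phi_Q(\alpha) = \int_X \alpha\,\omega \wedge \bar Q \bar\omega$ of $M(X)$ (convergent on $C_c^\infty(X)_{>d}$ and extended across the origin), and proves (Lemmas \ref{l:maxind-con-phi-2d} and \ref{l:phiq-2d}) that $\phi_Q$ is killed by $(\Eu - d)^2$ but that $\phi_Q\cdot(\Eu - d)$ is $-\tfrac12\langle -, Q\rangle$ times the delta distribution, with $\langle-,-\rangle$ a nondegenerate pairing on $(\cO_X)_d$. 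Since $T_{\Eu}$ is central in $\End(M(X)_d)$ (as $\ad(T_\Eu)$ acts by $0$ there), it preserves any direct summand $K'$ supported at $s$, where it must act semisimply, so $T_\Eu - d$ would vanish on $K'$; the solutions $\phi_Q$ exhibit a nontrivial Jordan block of $T_\Eu$ linking $E_s$ to the rest of the module, giving the contradiction. Note also that the attracting/repelling hypothesis is not needed to ``exclude contributions from other fixed points'' as you suggest ($Y$ has only the one singular point); it is what makes $\hat\cO_{X,s}$ nonnegatively graded, so that Hamiltonian vector fields have weight $>d$, the radial function and sphere integrals make sense, and the regularization of $\phi_Q$ goes through.
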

The theorem will be proved in the case where $X$ is a homogeneous
hypersurface in $\bC^3$ in \S \ref{s:mmax-pf-2d} below (and hence also
where a neighborhood of $s$ is a neighborhood of such a
hypersurface). The proof will then be adapted to the general case in
\S \ref{s:mmax-pf}.

We can also explicitly describe $E_s$.  In a neighborhood $U_s$ of $s
\in X^\sing$, let $U_s$ be a complete intersection in a polydisc, and
consider the formal completion $\hat \cO_{U_s,s} = \hat \cO_{X,s}$ (or one
could use the analytic or algebraic local ring).
Then $\hat \cO_{X,s}$ also has a $\bC^\times$ action and decomposes
into weight spaces, $\hat \cO_{X,s} = \prod_{m} (\hat
\cO_{X,s})_m$.
\begin{proposition}\label{p:maxind-qt}
  Suppose $s$ is an attracting or repelling fixed point. Then, there
  is a canonical isomorphism $E_s \cong (\hat \cO_{X,s})_d \otimes \delta_s$.
\end{proposition}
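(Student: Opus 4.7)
The plan is to identify $E_s$ with the weight-$d$ part of a canonical local quotient, and then reduce the claim to a direct weight computation on the image of Hamiltonian flow in $\hat\cO_{X,s}$.

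First, by step (3) of the outline in \S \ref{ss:mr-proof-outline} (which rests on \cite[\S 5]{ES-dmlv}), the maximal quotient of $M(X)$ supported on $X^\sing$ is canonically $\bigoplus_{s' \in X^\sing} \HP_0(\hat\cO_{X,s'}) \otimes \delta_{s'}$. Since $\Xi_X$ is homogeneous of weight $d$, the image of Hamiltonian flow in $\hat\cO_{X,s}$ is a graded subspace, so $\HP_0(\hat\cO_{X,s})$ inherits a grading. All constructions being natural, the identification above is $\bC^\times$-equivariant, with $\delta_s$ carrying the equivariant structure compatible with the normalization on $M(X^\smth) \cong \Omega_{X^\smth}$ from \S \ref{s:con} (where the canonical generator has weight $d$). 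Taking weight-$d$ components in \eqref{e:mxd-ext} gives $E_s \cong \HP_0(\hat\cO_{X,s})_d \otimes \delta_s$, so it suffices to show that the projection $\hat\cO_{X,s} \onto \HP_0(\hat\cO_{X,s})$ restricts to an isomorphism $(\hat\cO_{X,s})_d \iso \HP_0(\hat\cO_{X,s})_d$.

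Second, I would carry out the weight count for the Hamiltonian image. Every Hamiltonian vector field is $\xi_\alpha := i_{\Xi_X}(\alpha)$ for an exact $(\dim X - 1)$-form $\alpha = d\beta$, acting on $g \in \hat\cO_{X,s}$ by $\xi_\alpha(g) = i_{\Xi_X}(\alpha \wedge dg)$. For homogeneous $\alpha$ of weight $w$ and homogeneous $g$ of weight $m$, this is homogeneous of weight $d + w + m$. At an attracting (resp.\ repelling) fixed point $s$, every coordinate on $\hat\cO_{X,s}$ has strictly positive (resp.\ negative) weight, so every nonzero homogeneous form on $\hat\cO_{X,s}$ of positive degree has weight of that strict sign; in particular, any nonzero homogeneous exact form $\alpha = d\beta$ has $w \neq 0$ of that sign (since $\beta$ may be taken homogeneous of weight $w$, and a weight-$0$ primitive would be constant with $d\beta = 0$). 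Similarly, any $g$ of weight $m = 0$ is a constant, annihilated by $\xi_\alpha$. Thus $\xi_\alpha(g)$ is either zero or homogeneous of weight strictly different from $d$. Decomposing arbitrary $\alpha, g$ into homogeneous components, the entire Hamiltonian image in $\hat\cO_{X,s}$ lies in $\bigoplus_{m \neq d} (\hat\cO_{X,s})_m$.

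This immediately gives the required isomorphism $(\hat\cO_{X,s})_d \iso \HP_0(\hat\cO_{X,s})_d$, and combining with the first step completes the proof. The hard part is step one: to make the argument canonical, one must verify that the identification of the maximal quotient with $\bigoplus_s \HP_0(\hat\cO_{X,s}) \otimes \delta_s$ obtained from \cite[\S 5]{ES-dmlv} really is $\bC^\times$-equivariant with precisely the normalization of $\delta_s$ that makes the proposition's formula literally correct. This is a naturality and bookkeeping check using the homogeneity of $\Xi_X$ and the equivariant structure on $M(X)$ set up in \S \ref{s:con}; once it is in place, the weight computation of step two finishes the proof at once.
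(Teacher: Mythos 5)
Your proposal is correct and follows essentially the same route as the paper: identify the maximal quotient of $M(X)$ supported at $s$ with $(\hat\cO_{X,s})_{H(\hat X_s)}\otimes\delta_s$ (the paper's \S\ref{ss:maxquot}, from \cite[\S 5]{ES-dmlv}), then observe that homogeneous $(n-2)$-forms on the cone have weight of a fixed strict sign, so Hamiltonian vector fields shift weights strictly away from $d$ and the weight-$d$ part of the coinvariants is all of $(\hat\cO_{X,s})_d$. The equivariance/normalization check you single out as the hard part is treated as immediate in the paper, since the identification is canonical and hence compatible with the $\bC^\times$-structure; otherwise your weight count matches the paper's, with the minor difference that you treat the attracting and repelling cases symmetrically where the paper writes out only the conical (attracting) case.
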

This proposition will also be proved in \S$\!$\S \ref{s:mmax-pf-2d}
and \ref{s:mmax-pf} below (it follows relatively easily from results
we will recall in \S \ref{ss:maxquot}).
\begin{example} If $X$ is the cone over a smooth curve of weight $d$
  in the projective plane $\bC \mathbf{P}^2$, then one may check that
  $\dim (\cO_{X})_d = g$, where $g = \frac{(d-1)(d-2)}{2}$ is the
  genus of the curve. In fact, for arbitrary conical surfaces $X$, if
  $X^\smth / \bC^\times$ is a curve of genus $g$, then it is still
  true that $\dim (\cO_X)_d = g$: this follows from Example
  \ref{ex:conj-sfc} and Propositions \ref{p:conj-rewr} and
  \ref{p:conj-con} below.  Since, in this case, $H^0 j_!
  \Omega_{X^\smth}$ is an extension of the irreducible $\caD$-module
  $\IC(X)$ by $\delta_s^{2g}$ (by \eqref{e:ic-ds}), we see that
  $M(X)_{\max}$ has a filtration with subquotients $\delta_s^g$ on the
  top, $\IC(X)$ in the middle, and $\delta_s^{2g}$ on the bottom. That
  is, the extension is maximal on the bottom, and half-maximal on the
  top.
\end{example}
\begin{remark}\label{r:us-hat}
  The fact that \eqref{e:mxd-ext-1pt} is indecomposable is a local
  statement, i.e., it can be applied to neighborhoods $U_s$ of each
  point $s$. Thus, we deduce the following statement: If $s \in X$ has
  a neighborhood $U_s$ which is isomorphic to a neighborhood $V_s$ of
  a conical variety $Y$, then $M(U_s)_{\max} \cong M(Y)|_{V_s}$ is given by an exact
  sequence, for $U_s^\circ := U_s \setminus \{s\}$,
\begin{equation}\label{e:mxd-ext-1pt-loc}
0 \to H^0 (j_{U_s^\circ})_! \Omega_{U_s} \to M(U_s)_{\max} \to (\hat \cO_{Y,s})_d \otimes \delta_s
\to 0.
\end{equation}
\end{remark}
\begin{remark}\label{r:us-hom-tvfd}
  Note that, in the previous remark, we did not need the assumption
  that the top polyvector field $\Xi_{V_s}$ on $U_s \cong V_s$ was the
  restriction of a homogeneous polyvector field on $Y$.  Assume only
  that $\Xi_{V_s}$ is nonvanishing on $V_s^\circ$.  We claim that
  there exists a homogeneous polyvector field $\Xi_Y$ on $Y$ which is
  nonvanishing on $Y^\circ := Y \setminus \{s\}$.  Since, as we
  remarked before, $H(V_s)$ does not depend on the choice of
  $\Xi_{V_s}$ nonvanishing on $V_s^\circ$ (since $H(V_s)$ does not
  change when multiplying $\Xi_{V_s}$ by any nonvanishing function),
  $M(V_s)$ is independent of this choice as well. Thus we can assume
  without loss of generality that $\Xi_{V_s}$ is the restriction of a
  homogeneous polyvector field $\Xi_Y$ on $Y$.

  To prove the claim, we first restrict to a formal (or analytic)
  completion $\hat X_s$.  Write $\Xi_{\hat X_s} := \Xi_{V_s} |_{\hat
    X_s}$ as a sum of homogeneous components, $\Xi_{\hat X_s} =
  \sum_{m=N}^\infty (\Xi_{\hat X_s})_m$.  Here $N \in \bZ$ exists
  because, for some $N \in \bZ$, one has $(T^{\dim X}_{\hat X_s})_{<N}
  = 0$: for instance, if $\omega \in \Omega^{\dim X}_{Y}$ is any
  homogeneous top differential form, of weight $-N \geq 0$, then
  $(T^{\dim X}_{\hat X_s})(\omega) \subseteq \hat \cO_{\hat X_s}$, and
  hence $(T^{\dim X}_{\hat X_s})_{<N}=0$.

  Now, it cannot be that, for all $m$, $(\Xi_{\hat X_s})_m \in
  \mathfrak{m}_s \cdot \Xi_{\hat X_s}$, for $\mathfrak{m}_s$ the
  maximal ideal of $s$, since in that case $\Xi_{\hat X_s} \in
  \mathfrak{m}_s \cdot \Xi_{\hat X_s}$, which is impossible.  For all
  $m$, write $(\Xi_{\hat X_s})_m = f_m \cdot \Xi_{\hat X_s}$.  Then
  for some $m$, $f_m$ is invertible in a neighborhood of $s$ (and we
  remark that this implies that $f_k = 0$ for $k < m$, since $f_k
  f_m^{-1}$ is regular on $Y$ of weight $k-m$).  Therefore, $f_m^{-1}
  \Xi_{\hat X_s} = (\Xi_{\hat X_s})_m$ is homogeneous.  Now,
  homogeneous elements of $T_{\hat X_s}$ are regular on all of $Y$,
  since $Y$ is conical.  Thus, $(\Xi_{\hat X_s})_m$ is the restriction
  of a regular homogeneous top polyvector field $\Xi_Y$ on $Y$ which
  is nonvanishing on $Y^\circ$, as desired.
\end{remark}
\subsection{General conjecture on  the indecomposable summand of $M(X)$}

For any complex algebraic or analytic variety $X$, let
$\Omega_X^\bullet$ denote
the commutative differential graded algebra of K\"ahler forms on
$X$. Recall that this is the algebra generated by $\cO_X$ in degree
zero and symbols $\sdf f$, for all $f \in \cO_X$, in degree one,
subject to the relations
\[
\sd (fg) = \sd(f) \cdot g + f \cdot \sd(g), \quad f,g \in \cO_X,
\]
and equipped with the differential, which we also denote abusively by
$\sd$, which is the unique derivation sending $f$ to $\sdf f$ for all
$f \in \cO_X$ and annihilating the symbols $\sdf f$ for all $f \in
\cO_X$.   

%When $X$ is singular, then $\Omega_X$ can have torsion. We let $\tilde
%\Omega_X$ denote the quotient of $\Omega_X$ by the torsion submodule.

When $X$ is not necessarily an affine variety, then we let $\Omega_X^\bullet$
denote the quasicoherent sheaf of K\"ahler forms on $X$. Note that the
commutative differential graded algebra structure is compatible with
restriction, i.e., this is a sheaf of differential graded
$\cO_X$-algebras.  
% Similarly, we let $\tilde \Omega_X$ be the quotient of $\Omega_X$ by
% the torsion subsheaf.

For every $s \in X^\sing$,
let $U_s$ be a neighborhood of $s$ which is a complete intersection in a polydisc (which
is disjoint from $X^\sing \setminus \{s\}$).  Let $U_s^\circ := U_s \setminus \{s\}$ be the
punctured neighborhood.
\begin{definition}
  Let $\Omega^{\dim X}_\conv(U_s^\circ) \subseteq \Omega^{\dim
    X}(U_s^\circ)$ denote the vector space of holomorphic $(\dim
  X)$-forms $\alpha$ on $U_s^\circ$ which are $L^2$-convergent at $s$,
  i.e., such that $\int \alpha \wedge \bar \alpha$ is convergent in a
  neighborhood of $s$.
\end{definition}
\begin{definition}
  Let $\Omega^{\dim X}_{\log}(U_s^\circ) \subseteq \Omega^{\dim X}(U_s^\circ)$ denote the vector space of
  holomorphic $(\dim X)$-forms $\alpha$ on $U_s^\circ$ which are at
  most logarithmically divergent at $s$.  That is, for every function
  $f \in \cO_{U_s}$ vanishing at $s$, the limit $|\log
  \varepsilon|^{-1} \int_{|f| \geq \epsilon} \alpha \wedge \bar
  \alpha$ exists as $\epsilon \to 0$.
\end{definition}
We can give an alternative description of the above spaces: Let
$\widetilde {U_s} \to U_s$ be a resolution of singularities with a normal
crossing exceptional divisor $D$.  Then, $\Omega^{\dim
  X}_\conv(U_s^\circ)$ consists of those forms which extend to
holomorphic forms on $\widetilde{U_s}$, and $\Omega^{\dim
  X}_{\log}(U_s^\circ)$ consists of those forms which extend to
meromorphic forms on $\widetilde{U_s}$ with at most first-order poles (on
$D$).
\begin{conjecture}\label{c:max-ext}
$M_{\max}$ is an extension by $H^0 j_! \Omega_{X^\smth}$ of 
\begin{equation}\label{e:log-conv}
\bigoplus_s \Omega^{\dim X}_{\log}(U_s^\circ) / \Omega^{\dim X}_\conv(U_s^\circ) \otimes \delta_s.
\end{equation}
\end{conjecture}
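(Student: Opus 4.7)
The plan is to prove Conjecture \ref{c:max-ext} by localizing at each singular point and reducing to the conical case established in Theorem \ref{t:maxind-con} and Proposition \ref{p:maxind-qt}. Since $M_{\max}(X)/H^0 j_!\Omega_{X^\smth}$ is supported on $X^\sing$ and therefore decomposes as a direct sum of delta-function $\caD$-modules indexed by $s \in X^\sing$, it suffices to work in a small contractible analytic neighborhood $U_s$ of a single singular point, realized as a complete intersection in a polydisc; locally the conjecture reads
\[
M_{\max}(U_s)/H^0 j_!\Omega_{U_s^\smth} \;\cong\; \Omega^{\dim X}_{\log}(U_s^\circ)/\Omega^{\dim X}_\conv(U_s^\circ) \otimes \delta_s.
\]

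First I would construct a canonical comparison map. A holomorphic top form $\alpha$ on $U_s^\circ$ is by definition a section of $M(U_s^\circ) = \Omega_{U_s^\circ}$; the log condition guarantees that, after subtracting a principal logarithmic divergence, $\alpha$ regularizes to a distributional section of $M(U_s)$, well-defined modulo delta distributions at $s$. Projecting to $M_{\max}(U_s)/H^0 j_!\Omega_{U_s^\smth}$ yields a linear map $\Phi_s$. Using the description of $\Omega^{\dim X}_\conv$ as the space of forms that extend holomorphically to a resolution $\widetilde{U_s} \to U_s$, such forms canonically lift to sections of $H^0 j_!\Omega_{U_s^\smth}$ via the universal property \eqref{e:h0rhom}, applied to the $\caD$-module pushforward from $\widetilde{U_s}$ (which is itself a $\caD$-module extension of $\Omega_{U_s^\smth}$ by modules supported on the exceptional divisor). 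Hence $\Phi_s$ descends to a map $\bar\Phi_s$ on the log/convergent quotient.

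To show $\bar\Phi_s$ is an isomorphism, I would first treat the conical case directly. When $U_s$ is a neighborhood of a conical variety $Y$ with $\Xi_Y$ homogeneous of weight $d$, multiplication by $\Xi_Y^{-1}$ identifies $(\hat\cO_{Y,s})_d$ with weight-$0$ top holomorphic forms on $Y^\circ$; by homogeneity, such forms extend to meromorphic forms with at most first-order poles along the exceptional divisor of a $\bC^\times$-equivariant resolution (a pole of order $\geq 2$ would force negative weight), and the $L^2$-convergent ones correspond exactly to the images of $(\hat\cO_{Y,s})_{>d}$, which vanish in the weight-$d$ component. Combined with Proposition \ref{p:maxind-qt}, this proves the conjecture for conical $U_s$, and extends to locally conical $U_s$ via Remarks \ref{r:us-hat} and \ref{r:us-hom-tvfd}.

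The hardest step is passing from the (locally) conical case to the general case. My proposed method is a deformation argument: deform $U_s$ to its tangent cone $Y$ through a one-parameter family $\mathcal{Y} \to \bA^1$ with conical central fiber, and argue that the dimensions of both sides of $\bar\Phi_s$ are locally constant in the parameter. For the target, this requires an analogue of the flatness in Theorem \ref{t:mt-dm-2d-fl} for degenerations (rather than smoothings) whose singular fibers have the same Milnor number, proved via an Euler-Poincar\'e characteristic argument along the lines of \S \ref{ss:mr-proof-outline}; for the source, one needs constancy of $\dim \Omega^{\dim X}_{\log}/\Omega^{\dim X}_\conv$ in the family, which should follow from upper-semicontinuity of sheaf cohomology on a simultaneous resolution. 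Coordinating these two semicontinuity results along the same deformation is the delicate point, and is what keeps the statement a conjecture outside the locally conical case.
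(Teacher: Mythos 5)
The statement you are proving is stated in the paper as a \emph{conjecture}, and the paper itself establishes it only in the locally conical case (Proposition \ref{p:conj-con}), by reducing to a conical neighborhood, invoking Theorem \ref{t:maxind-con}, Proposition \ref{p:maxind-qt} and Remarks \ref{r:us-hat}--\ref{r:us-hom-tvfd}, and then computing that for homogeneous $\Xi_X$ of weight $d$ one has $\Omega^{\dim X}_{\log}(X^\circ)=(\cO_X)_{\geq d}\cdot\Xi_X^{-1}$ and $\Omega^{\dim X}_{\conv}(X^\circ)=(\cO_X)_{\geq d+1}\cdot\Xi_X^{-1}$, so the quotient is $(\cO_X)_d\cong(\hat\cO_{X,s})_d$. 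Your treatment of the (locally) conical case is essentially this argument; your detour through pole orders on an equivariant resolution rather than the direct weight-versus-divergence computation is a cosmetic difference.

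For the general case your proposal has genuine gaps and does not upgrade the conjecture to a theorem. First, the degeneration of $U_s$ to its tangent cone is problematic: the tangent cone of an isolated locally complete intersection singularity need not be reduced, need not have an isolated singularity, and need not carry a nonvanishing top polyvector field off the vertex, so the framework of Theorems \ref{t:maxind-con} and \ref{t:mt-dm-2d} may simply not apply to the central fiber. Second, the flatness mechanism of Theorem \ref{t:mt-dm-2d-fl} is not available here: the paper's Euler--Poincar\'e argument in \S \ref{s:mr-dmr-pf} crucially uses that the generic fiber of the family is \emph{smooth}, so that $M(X_t)\cong\Omega_{X_t}$ and $\chi(\pi_*M(X_t))$ is computable topologically; in a degeneration whose generic fiber is the singular $U_s$ itself there is no such anchor, and indeed the Milnor number and the invariant $\dim\bigl(\Omega^{\dim X}_{\log}/\Omega^{\dim X}_{\conv}\bigr)$ (the geometric genus of the singularity, by Proposition \ref{p:conj-rewr}) generally jump under degeneration to the tangent cone, so neither semicontinuity statement you need can be expected to hold with the required equality. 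Finally, the construction of the comparison map $\Phi_s$ by ``regularizing after subtracting a principal logarithmic divergence'' is exactly what the paper carries out via the functionals $\phi_Q$ and the projections $\pr^q_{>d}$ in \S \ref{s:mmax-pf-2d}--\ref{s:mmax-pf}, but every step there (well-definedness modulo weight-$(-d)$ delta distributions, the vanishing $\phi_Q\cdot\xi=0$, the nondegenerate pairing) leans on the grading by the $\bC^\times$-action; without a contracting action it is not clear the regularization is canonical or that it lands in $\Hom(M(X)_d,N)$. You correctly identify this passage as the delicate point, but that is precisely why the statement remains a conjecture outside the locally conical case.
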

We can give an alternative description of the space in the conjecture,
in terms of an arbitrary resolution $\widetilde{U_s} \to U_s$ with
normal crossing exceptional divisor $D$.  Let $\Omega^{\dim D}_{D,\log}$
denote the sheaf of meromorphic $\dim D$-forms on $D$ which have at
most log poles on the intersection of components of $D$, such that the
residue along intersections of components makes sense (i.e., if $D =
\bigcup_i D_i$ and $\alpha \in \Omega^{\dim D}_{\log}$, we require
that the residues of $\alpha|_{|D_i}$ and of $\alpha_{|D_j}$ along
$D_i \cap D_j$ are the same element of $\Omega^{\dim D-1}_{{D_i \cap D_j},\log}$).
\begin{proposition}\label{p:conj-rewr}\footnote{This proposition contained an error in the published version, where regular forms were erroneously used 
instead of forms with at most log poles.}
  The vector space $\Omega^{\dim X}_{\log}(U_s^\circ) / \Omega^{\dim
    X}_\conv(U_s^\circ)$ is canonically isomorphic to
  $\Gamma(D,\Omega^{\dim D}_{D,\log})$.
\end{proposition}
%In other words, this is the direct sum of the vector spaces of
%holomorphic volume forms on the components of $D$.
\begin{proof}
Consider the exact sequence of sheaves on $\widetilde{U_s}$, 
\[
0 \to \Omega^{\dim X}_{\widetilde{U_s}} \to
\Omega^{\dim X}_{\widetilde{U_s}}(D) \to \Omega^{\dim X - 1}_{D,\log} \to 0,
\]
where $\Omega^{\dim X}_{\widetilde{U_s}}(D)$ is the space of
meromorphic $(\dim X)$-forms on $\widetilde{U_s}$ with at most simple
poles on $D$ (and no poles elsewhere), and the second nontrivial map
takes the residue along the components of $D$.  The long exact
sequence on cohomology yields, in view of the comments before the
conjecture,
\[
0 \to \Omega^{\dim X}_{\conv}(U_s^\circ) \to \Omega^{\dim
  X}_{\log}(U_s^\circ) \to \Omega_{D,\log}^{\dim X - 1} \to
H^1(\widetilde{U_s}, \Omega^{\dim X}_{\widetilde{U_s}}).
\]
The proposition will follow once we show that $H^1(\widetilde{U_s},
\Omega^{\dim X}_{\widetilde{U_s}})$ is zero. This is a
consequence of the Grauert-Riemenschneider vanishing theorem. In more
detail, the Grauert-Riemenschneider vanishing theorem for proper
birational $\rho$ implies that the complex $\rho_* \Omega^{\dim
  X}_{\widetilde{U_s}}$ of quasicoherent sheaves has vanishing
cohomology sheaves in nonzero degrees. Thus 
$H^1(\widetilde{U_s}, \Omega^{\dim X}_{\widetilde{U_s}}) = H^1(U_s,
H^0 \rho_* \Omega^{\dim X}_{\widetilde{U_s}})$, where $H^0 \rho_*
\Omega^{\dim X}_{\widetilde{U_s}}$ is the underived pushforward of
$\Omega^{\dim X}_{\widetilde{U_s}}$, i.e., it is a quasicoherent sheaf (rather than
a complex of sheaves).
% , or alternatively a complex of sheaves with
% cohomology concentrated in degree zero.
Since $U_s$ is Stein,
higher cohomology of any quasicoherent sheaf on $U_s$ is zero.  Thus,
$H^1(\widetilde{U_s}, \Omega^{\dim X}_{\widetilde{U_s}}) = 0$.
\end{proof}
\begin{example}\label{ex:conj-sfc}\footnote{As for the proposition, this example contained an error in the published version, which is corrected here.}
If $X$ is a surface, then $D$ is a curve.
For $D_i$ the
components of $D$,  denote their genera by $g_i$. Let $b$ be the first
Betti number of the dual graph of $D$ (the graph whose vertices are the components and with an edge between components for every intersection point).  Then we obtain
\[
\dim \Gamma(D, \Omega^1_{D,\log}) = b + \sum_i g_i,
%\cong \bigoplus_{D_i} \Gamma(D_i, \Omega^1_{D_i}),
\]
% and hence the dimension of $\Gamma(D, \Omega^1_D)$ is the sum of the
% genera of the components $D_i$.
Since $\widetilde{U_s} \to U_s$ was
arbitrary, this sum must be independent of the choice of the
resolution $\widetilde{U_s}$, and this fact is well-known.
\end{example}

\subsection{Proof of the conjecture in the locally conical
  case}\label{ss:pf-conj-con}
In this subsection we prove the following result using \S \ref{s:con}.
Call $X$ \emph{locally conical} at $s$ if there exists an analytic
neighborhood $U_s$ of $s$ which is isomorphic to an analytic
neighborhood of a conical variety. Equivalently, $(U_s, s) \cong (V_s,
s)$ where $V_s \subseteq Y$ is an analytic open subset and $Y$ is
conical with vertex $s$.
\begin{proposition}\label{p:conj-con}
  Conjecture \ref{c:max-ext} is true when, for every $s \in X^\sing$,
  $X$ is locally conical at $s$.
\end{proposition}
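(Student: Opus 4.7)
The plan is to reduce Conjecture~\ref{c:max-ext} in the locally conical case to a canonical identification at each singular point. By Remark~\ref{r:us-hat}, Theorem~\ref{t:maxind-con}, and Proposition~\ref{p:maxind-qt}, the maximal quotient of $M_{\max}(X)$ supported on $X^{\sing}$ is canonically isomorphic to
\[
\bigoplus_{s \in X^{\sing}} (\hat \cO_{Y_s, s})_{d_s} \otimes \delta_s,
\]
where $Y_s$ is the conical variety whose vertex has a neighborhood isomorphic to $U_s$, and $d_s$ is the weight of its top polyvector field $\Xi_{Y_s}$. Thus it suffices to produce, for each $s$, a canonical isomorphism
\[
(\hat \cO_{Y_s, s})_{d_s} \cong \Omega^{\dim X}_{\log}(U_s^\circ) / \Omega^{\dim X}_{\conv}(U_s^\circ).
\]

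I would construct this map by sending $f$ to the holomorphic top form $f \cdot \Xi_{Y_s}^{-1}$ on $Y_s^{\smth} = U_s^\circ$. Since $\Xi_{Y_s}^{-1}$ has weight $-d_s$, this image is $\bC^\times$-homogeneous of weight $0$. To verify $f \cdot \Xi_{Y_s}^{-1} \in \Omega^{\dim X}_{\log}$, I would pass to a $\bC^\times$-equivariant resolution $\rho : \widetilde{U_s} \to U_s$ with simple normal crossing exceptional divisor $D = \bigcup D_i$. Because the $\bC^\times$-action on $Y_s$ contracts to the fixed point $s$, every fixed point of $\widetilde{U_s}$ lies in $D$, and at each such point all cotangent weights are nonnegative (otherwise orbits in the negative directions would fail to have a limit as $t \to 0$), with positive weights $w_i > 0$ normal to each component $D_i$ through the point and weight $0$ along $D_i$. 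A direct weight count in equivariant local coordinates at a normal-crossing intersection then forces any weight-$0$ meromorphic top form with exact pole order $k_i$ along $D_i$ to satisfy $(k_i - 1) w_i = 0$ for every $i$ in the support, so $k_i \leq 1$. Thus $\rho^*(f \cdot \Xi_{Y_s}^{-1})$ has at most simple poles along $D$, as required.

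For bijectivity I would invoke the weight-homogeneous decomposition $\omega = \sum_{m \in \bZ} \omega_m$ of any holomorphic top form on $U_s^\circ$, obtained by Fourier expansion along the $\bC^\times$-orbits (shrinking $U_s$ to be $\bC^\times$-stable, which is possible via the contracting action). Reduction to monomials on the equivariant resolution, modeled on the local computation $\int_{|z|<1} |z|^{2k}\, dV < \infty \iff k > -1$, then shows that $\omega_m$ is $L^2$-convergent at $s$ iff $m > 0$ and logarithmically divergent iff $m \geq 0$. Hence the quotient $\Omega^{\dim X}_{\log}/\Omega^{\dim X}_{\conv}$ is canonically the space of weight-$0$ holomorphic top forms on $Y_s^{\smth}$, and any such form is uniquely $f \cdot \Xi_{Y_s}^{-1}$ for $f \in (\cO_{Y_s^{\smth}})_{d_s}$. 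By normality of $Y_s$ (a complete intersection of dimension $\geq 2$ with isolated singularities), $(\cO_{Y_s^{\smth}})_{d_s} = (\cO_{Y_s})_{d_s} = (\hat \cO_{Y_s, s})_{d_s}$, completing the isomorphism.

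The main obstacle is the weight analysis on the equivariant resolution: verifying that every exceptional component has strictly positive normal weight, and controlling pole orders at the potentially reducible normal-crossing intersections. Both rest essentially on the contracting character of the $\bC^\times$-action (the vertex is the unique attractor on $Y_s$, forcing nonnegative weights at every fixed point of $\widetilde{U_s}$ and strict positivity normal to $D$, since otherwise the fixed locus of $\bC^\times$ would extend off of $D$ into $Y_s^{\smth}$). The convergence of the $\bC^\times$-weight expansion of holomorphic forms is standard once $U_s$ is taken to be $\bC^\times$-stable.
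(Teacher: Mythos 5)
Your overall strategy coincides with the paper's: reduce to a single conical singularity, use Theorem \ref{t:maxind-con}, Remark \ref{r:us-hat} and Proposition \ref{p:maxind-qt} to identify the maximal quotient of $M_{\max}$ supported at $s$ with $(\hat \cO_{X,s})_d \otimes \delta_s$, and then match $(\hat\cO_{X,s})_d$ with $\Omega^{\dim X}_{\log}/\Omega^{\dim X}_{\conv}$ via $f \mapsto f\,\Xi_X^{-1}$, using normality to write every holomorphic top form on $X^\circ$ as $\cO_X\cdot\Xi_X^{-1}$ and a weight criterion for $L^2$-convergence versus logarithmic divergence. The one place you genuinely diverge is in how that weight criterion is justified: the paper simply asserts (and it follows from a direct scaling argument on the cone, comparing dyadic shells $B_{2^{-k}}\setminus B_{2^{-k-1}}$) that a homogeneous top form is convergent iff its weight is positive and at most log-divergent iff its weight is nonnegative, whereas you route through a $\bC^\times$-equivariant resolution and a pole-order count on the exceptional divisor. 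That detour can be made to work, but as written it has two soft spots: (i) the claim that the tangent weight \emph{along} $D_i$ at a fixed point is $0$ is false in general (e.g.\ for a weighted blowup the exceptional $\bP^1$ carries a nontrivial action), though the correct statement --- that a weight-$0$ monomial $z^a\,dz_1\wedge\cdots\wedge dz_n$ with all coordinate weights $\geq 0$ forces $a_i=-1$ whenever $w_i>0$ --- still yields pole order $\leq 1$ along components with positive normal weight; and (ii) "shrinking $U_s$ to be $\bC^\times$-stable" is not actually possible for a contracting action (the only stable open neighborhood of the vertex is the entire cone), which is why the paper works on all of $Y_s$ first and then transfers the computation to $U_s$ via the chain of inclusions through the formal completion $(\hat\cO_{X,s})_{\geq d}\cap\cO_{U_s}$; your write-up elides this last transfer step. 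Neither issue is fatal, but both need to be patched for the argument to be complete.
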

\begin{proof}
  Since Conjecture \ref{c:max-ext} is a local statement, we can assume
  $X$ is conical with vertex $s$. Thus $X$ is affine and equipped with
  a $\bC^\times$ action with $s$ as attracting fixed point.  As
  explained in Remark \ref{r:us-hom-tvfd}, we can further assume that
  $\Xi_{U_s}$ is the restriction of a homogeneous polyvector field on
  $X$ of some weight $d$, which is therefore nonvanishing
  on $X^\circ := X \setminus \{0\}$.  
For simplicity, we first work on $X$ instead of on $U_s$.

Thus, by Theorem \ref{t:maxind-con} and Remark \ref{r:us-hat},
$M_{\max}$ is an extension by $H^0 j_! \Omega_{X^\smth}$ of
$(\bC^{\mu_s} \otimes \delta_s)_d$, where the latter is the weight $d$
part of the maximal quotient of $M(X)$ supported at $s$.  By
Proposition \ref{p:maxind-qt}, the latter is isomorphic to $(\hat
\cO_{X,s})_d \otimes \delta_s$.

Next, note that $\Xi_{X}^{-1}$ is a holomorphic $(\dim X)$-form on
$X^\circ$, and all holomorphic $(\dim X)$-forms on $X^\circ$ are of
the form $\cO_{X} \cdot \Xi_{X}^{-1}$ (as $X$ is normal).

Note that a holomorphic $(\dim X)$-form on $X$ automatically has
positive weight (at least $\dim X$), since $X$ is conical.
Furthermore, a homogeneous holomorphic $(\dim X)$-form on $X^{\circ}$
is at most logarithmically divergent if and only if it has nonnegative
weight; similarly, it is convergent if and only if it has positive
weight.  Therefore,
\[
\Omega_{\log}^{\dim X}(X^{\circ}) = (\cO_{X})_{\geq d} \cdot
\Xi_{X}^{-1}, \quad \Omega_{\conv}^{\dim X}(X^{\circ}) =
(\cO_{X})_{\geq (d+1)} \cdot \Xi_{X}^{-1}.
\]
We may conclude that $\Omega_{\log}^{\dim X}(X^{\circ}) /
\Omega_{\conv}^{\dim X}(X^{\circ}) \cong (\cO_{X})_d$.  Since $X$ is
conical, the latter is the same as $(\hat \cO_{X,s})_d$.

Now, if we work instead on $U_s$, we can use a formal (or analytic)
completion $\hat X_s$ at $s$. Then, the same argument as above shows
that
\[
\Omega_{\log}^{\dim X}(U_s^{\circ}) = ((\hat \cO_{X,s})_{\geq d} \cap
\cO_{U_s}) \cdot \Xi_{U_s}^{-1}, \quad \Omega_{\conv}^{\dim
  X}(U_s^{\circ}) = ((\hat \cO_{X,s})_{\geq (d+1)} \cap \cO_{U_s})
\cdot \Xi_{U_s}^{-1}.
\]

Consider the canonical inclusions
\begin{multline}
  ((\hat \cO_{X,s})_{\geq d} \cap \cO_{X}) / ((\hat \cO_{X,s})_{\geq
    (d+1)} \cap \cO_{X}) \to
  ((\hat \cO_{X,s})_{\geq d} \cap \cO_{U_s}) / ((\hat \cO_{X,s})_{\geq (d+1)} \cap \cO_{U_s}) \\
  \to (\hat \cO_{X,s})_{\geq d} / (\hat \cO_{X,s})_{\geq (d+1)} \cong
  (\cO_X)_d,
\end{multline}
whose composition is an isomorphism since $X$ is conical.  Hence, the
middle term is also isomorphic to $(\cO_X)_d$, so $\Omega_{\log}^{\dim
  X}(U_s^\circ) / \Omega_{\conv}^{\dim X}(U_s^\circ) \cong (\cO_X)_d
\cong (\hat \cO_{X,s})_d$ as well.
\end{proof}

\section{Recollections on Hamiltonian vector fields}
\label{s:ham-vfd}
As before, throughout the paper $X$ can be either a complex algebraic
or complex analytic variety, and $\Omega_X^\bullet$ is its
differential graded algebra of K\"ahler forms.

Here we recall from \cite[\S 3.4]{ES-dmlv} the definition of the Lie
algebra of Hamiltonian vector fields $H(X)$ when $X$ is equipped with
a top polyvector field $\Xi_X \in \wedge^{\dim X} T_X$.  In the case
when $X$ is a complete intersection surface in affine space (or in a
Calabi-Yau variety, or as in Remark \ref{r:ci-bundles}), this is the
Hamiltonian flow with respect to the Jacobian Poisson structure on
$X$. More generally, for complete intersections in Calabi-Yau varieties
of higher dimensions, we have the corresponding top polyvector field.

Associated to $\Xi_X$, as in \cite[\S 3.4]{ES-dmlv}, is a natural Lie
algebra of ``Hamiltonian'' vector fields.  Namely, to every
$(\dim X-2)$-form $\alpha \in \Omega_X^{\dim X-2}$, we associate the Hamiltonian
vector field $\xi_\alpha := i_{\sdf \alpha} \Xi_X$.
\begin{definition}\cite[\S 3.4]{ES-dmlv} \label{d:hx}
  Let $H(X) := \{\xi_\alpha \mid \alpha \in \Omega_X^{\dim X - 2}\}$.
\end{definition}
\begin{proposition}\cite[\S 3.4]{ES-dmlv} \label{p:hx-lie}
  $H(X)$ is a Lie subalgebra of the Lie algebra of vector fields under
  the Lie bracket.
\end{proposition}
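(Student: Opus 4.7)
The plan is to reduce the Jacobi-closure to a computation on the smooth locus, where $\Xi_X$ is nondegenerate and one can work with the volume form $\omega := \Xi_X^{-1}$, and then extend the resulting identity to all of $X$ by density. Under our standing hypotheses ($X$ is reduced, with isolated singularities, so $X^{\smth}$ is open and dense), any global vector field on $X$ is determined by its restriction to $X^{\smth}$: if $D \in \Gamma(X,T_X)$ restricts to zero on $X^{\smth}$, then for every $f \in \cO_X$ the function $D(f) \in \cO_X$ vanishes on the dense reduced subscheme $X^{\smth}$ and hence vanishes identically. So it suffices to exhibit, for given $\alpha,\beta \in \Omega_X^{n-2}$, a globally defined $\gamma \in \Omega_X^{n-2}$ such that $[\xi_\alpha, \xi_\beta] = \xi_\gamma$ as sections of $T_{X^{\smth}}$.

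On $X^{\smth}$, the defining identity $\xi_\alpha = i_{\sdf \alpha} \Xi_X$ is equivalent to $i_{\xi_\alpha} \omega = \sdf \alpha$. Since $\omega$ is top-degree, $\sdf \omega = 0$, so the Cartan formula yields $L_{\xi_\alpha} \omega = \sdf (i_{\xi_\alpha} \omega) = \sd \sdf \alpha = 0$ (and likewise for $\xi_\beta$), which is to say each $\xi_\alpha$ preserves the volume form. Applying the standard identities $i_{[v,w]} = L_v i_w - i_w L_v$ and $[L_v, \sd] = 0$, together with $L_{\xi_\alpha} \omega = 0$, gives on $X^{\smth}$ the chain
\[
i_{[\xi_\alpha, \xi_\beta]} \omega = L_{\xi_\alpha}(i_{\xi_\beta}\omega) = L_{\xi_\alpha} \sdf \beta = \sdf (L_{\xi_\alpha} \beta) = \sdf ( i_{\xi_\alpha} \sdf \beta),
\]
where the last equality uses $\sd \circ \sd = 0$ in $L_{\xi_\alpha} \beta = \sd i_{\xi_\alpha}\beta + i_{\xi_\alpha}\sd\beta$. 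Reading this back through the isomorphism $\gamma \mapsto \xi_\gamma$ on $X^{\smth}$, we obtain $[\xi_\alpha, \xi_\beta] = \xi_\gamma$ with
\[
\gamma := i_{\xi_\alpha} \sdf \beta \in \Omega_X^{n-2}.
\]

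The key observation is that $\gamma$ is intrinsically defined on all of $X$: both $\xi_\alpha \in \Gamma(X, T_X)$ and $\sdf \beta \in \Omega_X^{n-1}$ are global objects, so their contraction is a global $(n-2)$-form, and $\xi_\gamma := i_{\sdf \gamma}\Xi_X$ is a well-defined element of $H(X)$. Since the vector fields $[\xi_\alpha, \xi_\beta]$ and $\xi_\gamma$ agree on the dense open $X^{\smth}$, the injectivity of the restriction to $X^{\smth}$ forces equality on $X$, proving $[H(X), H(X)] \subseteq H(X)$. The one point requiring care in the writeup is the reduction to $X^{\smth}$ — specifically, checking that the singular locus really does not carry any independent vector-field data — but this follows cleanly from reducedness and the density of $X^{\smth}$, both of which are standard consequences of the locally complete intersection hypothesis with isolated singularities.
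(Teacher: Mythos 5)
Your proof is correct and follows essentially the same route as the paper: the paper's argument is precisely the identity $[\xi_\alpha,\xi_\beta]=\xi_{i_{\xi_\alpha}\sdf\beta}$, which you derive (via the volume form $\omega=\Xi_X^{-1}$ on $X^{\smth}$ and Cartan's formula) and then extend to all of $X$ by density and reducedness. The extra care you take in checking that the computation on $X^{\smth}$ determines the identity globally is a reasonable filling-in of what the paper leaves implicit.
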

Essentially, the preceding proposition is proved by observing the
identity
\[ [\xi_\alpha, \xi_\beta] = \xi_{i_{\xi_\alpha} \sdf \beta},
\]
which implies that $[H(X), H(X)] \subseteq H(X)$.

\subsection{Global generalization}
More generally, if $Y$ or $X$ is not necessarily affine, we can make
the same definition as in Definition \ref{d:hx}, where now $H(X)$
should be replaced by the \emph{presheaf} $\mathcal{H}(X)$ of
Hamiltonian vector fields. Then, Proposition \ref{p:hx-lie} carries
over to show that $\caH(X)$ is a presheaf of Lie algebras, i.e.,
$[\caH(X), \caH(X)] = \caH(X)$. See \cite[\S 2.10, Corollary
4.4]{ES-dmlv} for details.

\section{Proof of results from \S \ref{s:mr} and \S \ref{s:dmr}}
\label{s:mr-dmr-pf}
As pointed out in \S \ref{s:dmr}, all of the theorems in \S \ref{s:mr}
are implied by the more general versions in \S \ref{s:dmr}, and in
particular, everything follows from Theorems \ref{t:mt-dm-2d} and
\ref{t:mt-dm-2d-fl}, in their more general forms where $X$ is allowed
to be an analytically locally complete intersection of arbitrary
dimension $\geq 2$ equipped with a top polyvector field $\Xi_X$
vanishing at only finitely many points (i.e., at the finite singular
locus). To prove these results, we follow the steps in the outline of
the proof in \S \ref{ss:mr-proof-outline}, with subsections here
numbered the same as the outline there.

As the statements are local, it suffices to replace $X$ with an open
neighborhood of each $s \in \Xsing$, i.e., to assume $|\Xsing| =
1$. Note that if $X$ is smooth, then the theorems immediately follow
from the fact that, in this case, $M(X) \cong \Omega_{\Xsmth}$ by
\cite[Example 2.37]{ES-dmlv}).

It suffices to assume that $X$ is an analytic complete intersection in
a polydisc $Y$.  We henceforth assume this. Let $\dim X = n$ and say
that $X$ is cut out of $Y \cong B^{n+k}$ by $k \geq 1$ analytic functions,
$f_1, \ldots, f_k \in \cO_Y$.  Up to shrinking $Y$, we can assume in
fact that $X$ is contractible (\cite{Gil-eavlc}, cf.~also
\cite[2.10]{Mil-spch}).

\subsection{The smoothing $X_t$}
For generic $c_1, \ldots, c_k$, it follows that the complete
intersection cut out by $f_1-c_1, \ldots, f_k-c_k$ is smooth.
Therefore we can pick $c_1, \ldots, c_k$ such that the one-parameter
family $X_t := Z(f_1 - t c_1, \ldots, f_k - t c_k) \subseteq Y$ is
generically smooth.  As mentioned in the introduction, by
\cite{Mil-spch, Ham-ltekr}, for a small enough open ball $B(s)$ about
$s$ and for $0 < |t| \ll 1$, $X_t$ is smooth and the intersection $X_t
\cap B(s)$ is homotopic to a bouquet of $\mu_s$ spheres of dimension
$n$.  Let us shrink $Y$ so that $X_t$ is itself homotopic to a bouquet
of $\mu_s$ spheres of dimension $n$ for $t \in D \subseteq \bC$, where
$D$ is a suitably small open ball about $0$; we also assume that $X_t$
is smooth for $t \in D \setminus \{0\}$.

% Therefore the Betti numbers of $X_t$ for $0 < |t| \ll 1$ (and hence
% for all $t$ such that $X_t$ is smooth) are
% \begin{equation}\label{e:xt-betti}
% h^i(X_t) = \begin{cases} h^i(X), & \text{if $0 \leq i < n$} \\
%                         h^i(X)+\mu_s, & \text{if $i = n$}.
% \end{cases}
% \end{equation}
% Let $D \subseteq \bC$ be a small ball about zero such that $X_t$ is smooth
% for all $t \in D \setminus \{0\}$.

\subsection{The structure of the $\caD$-module $M(X)$}\label{ss:mx-str}
As explained in \cite[Example 2.37]{ES-dmlv}, $M(X^{\smth}) \cong
\Omega_{X^{\smth}}$, since $X^{\smth}$ is Calabi-Yau.  As in the
introduction, let $j: X^{\smth} \into X$ be the inclusion, so that
$M(X^{\smth}) = j^! M(X) = j^* M(X)$, which is therefore
$\Omega_{\Xsmth}$.  Then by adjunction we have a canonical morphism $N
:= H^0 j_! \Omega_{\Xsmth} = H^0 j_! j^!  M(X) \to M(X)$.  This is an
isomorphism on $\Xsmth$, and hence the cokernel, call it $K'$, is
supported on $\Xsing$.  Thus we have the exact sequence
\begin{equation}\label{e:m-seq}
  N \to M(X) \to K' \to 0.
\end{equation}
Theorem \ref{t:mt-dm-2d} reduces to showing that the first morphism,
$N \to M(X)$, is injective.  We will show this in \S \ref{ss:cp}
below.

As observed in \S \ref{ss:dm-2d} and \S \ref{ss:dm-hd}, $N$ is an
indecomposable extension of the simple $\caD$-module $\IC(X) = j_{!*}
\Omega_{\Xsmth}$, of the form
\begin{equation}
  0 \to K \to N \to \IC(X) \to 0, \quad K = \Ext^1(\IC(X), \delta_s)^* 
  \otimes \delta_s \cong H^{n - 1}(X \setminus \{s\}) \otimes \delta_s.
\end{equation}
For the final equality, we used \eqref{e:ic-ds} and the fact that $X$
is contractible.

In particular, because $N$ is indecomposable, it has no quotient
supported at $s$.  Therefore, the quotient $M(X) \onto K'$ is the maximal
quotient of $M(X)$ supported at $s$.  That is, $K' = H^0 i_* i^* M(X)$,
where $i: \{s\} \to X$ is the embedding.  We recall the structure of
this $K'$ in the next section.

% Recall the standard fact that $N = j_! \Omega_{\Xsmth}$ has no
% nonzero quotient supported on $\Xsing$.

\subsection{The maximal quotient of $M(X)$}\label{ss:maxquot}
Recall from \cite[Corollary 5.9]{ES-dmlv} the following formula for
the maximal quotient $K'$ of $M(X)$ supported at $s$:
\begin{equation}
K' := H^0 i_* i^* M(X) \cong \delta_s^{\mu_s}.
\end{equation}
The reason was simple: first, $K'$ identifies with $\Hom(M(X),
\delta_s)^* \otimes \delta_s \cong (\hat \cO_{X,s})_{H(\hat X_s)}
\otimes \delta_s$ (the second isomorphism is \cite[Lemma
5.10]{ES-dmlv}), and then the latter was computed in \cite[\S
5.2]{ES-dmlv} using \cite[Proposition 5.7.(iii)]{Gre-GMZ}. Here, $\hat
X_s = \Spf \hat \cO_{X,s}$ is the formal neighborhood of $s$ in $X$.

\subsection{The family $M(\mathcal{X})$ of $\caD$-modules}
Let $\mathcal{X} \subseteq Y \times D$ be the total space of the
family $X_t \subseteq Y$ for $t \in D$.  Let $i_t: X_t \into
\mathcal{X}$ and $i_{\mathcal{X}}: \mathcal{X} \into Y$ be the closed
embeddings so that $i_{\mathcal{X}} \circ i_t(X_t) = X_t \times \{t\}
\subseteq Y \times D$.  We now consider the family, call it $M(\mathcal{X})$,
of $\caD_Y$-modules over the disc $D$, whose fiber at each point
$t \in D$ is $(i_t)_* M(X_t)$. More precisely, this is a right
module over
the algebra $\caD_{Y \times D/D} = \cO_D \otimes \mathcal{D}_Y$
of relative differential operators on $Y \times D$ over $D$, i.e., of
$\cO_D$-$\mathcal{D}_Y$ modules. It can also be defined by taking the
$\caD_{Y \times D/D}$-module $\caD_{\mathcal{X}}$ whose fiber over each $t \in D$
is $\caD_{X_t}$, and quotienting by the left action of Hamiltonian vector fields.
(Note that, in the case where $X$ is two-dimensional, $\mathcal{X}$ is a Poisson
variety, and the global sections of $M(\mathcal{X})$ identify with the $\caD$-module
also denoted $M(\mathcal{X})$ defined earlier).

Since $X_t$ is smooth for $t \neq 0$, we have by
\cite[Example 2.37]{ES-dmlv} that $M(X_t) \cong (i_t)_* \Omega_{X_t}$
for $t \neq 0$. Our goal is to show that $M(\mathcal{X})$ is flat over
$D$.  Notice that, since $D \subseteq \bC$ is one-dimensional
(complex), $M(\mathcal{X})$ is flat if and only if it is torsion-free
(i.e., $(i_{\mathcal{X}})_* M(\mathcal{X})$ is torsion-free, or the
global sections of $M(\mathcal{X})$ form a torsion-free
$\cO_{D}$-module). So Theorem \ref{t:mt-dm-2d-fl} reduces to:
\begin{proposition}\label{p:tf}
  $M(\mathcal{X})$ is torsion-free over $D$.
\end{proposition}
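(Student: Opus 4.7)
The plan is to follow the outline in \S \ref{ss:mr-proof-outline}. Torsion-freeness of $M(\mathcal{X})$ over $D$ is equivalent to the vanishing of the $t$-torsion $T^1 := \ker(t\colon M(\mathcal{X}) \to M(\mathcal{X}))$ (since $\cO_{D,0}$ is a discrete valuation ring). I would first locate where $T^1$ can live: it is a $\caD_{\mathcal{X}}$-submodule annihilated by $t$, so supported on $X_0 = X$; and away from $\Xsing$ the projection $\mathcal{X} \to D$ is smooth, so $M(\mathcal{X})$ is there isomorphic to $\Omega_{\mathcal{X}/D}$ (by \cite[Example 2.37]{ES-dmlv}) and $\cO_D$-flat. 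Hence $T^1$ is supported on the finite set $\Xsing$, and as a finite-dimensional $\caD$-module supported at points, it is a direct sum of delta-function $\caD$-modules. Thus $\pi_* T^1$ is concentrated in degree $0$ with $\chi(\pi_* T^1) = \dim T^1 \ge 0$.

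Next I would set up an Euler-Poincar\'e characteristic comparison. Tensoring the Koszul resolution $0 \to \cO_D \xrightarrow{t} \cO_D \to \bC_0 \to 0$ with $M(\mathcal{X})$ presents the derived fiber at $t=0$ as an extension of $M(X_0)$ by $T^1[1]$. Invariance of the derived Euler characteristic along the one-parameter family (using flatness at generic $t$, where the derived and underived fibers agree and equal $M(X_t) \cong \Omega_{X_t}$) gives
\begin{equation*}
\chi(\pi_* M(X_0)) \;=\; \chi(\pi_* M(X_t)) + \dim T^1.
\end{equation*}
It therefore suffices to prove the reverse inequality $\chi(\pi_* M(X_0)) \leq \chi(\pi_* M(X_t))$.

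For the generic fiber, $M(X_t) \cong \Omega_{X_t}$ forces $\chi(\pi_* M(X_t)) = \chi(X_t) = 1 + (-1)^n \mu_s$ by the Milnor-Hamm description of $X_t$ as a bouquet of $\mu_s$ spheres of dimension $n = \dim X$ (using $X$ contractible with unique singularity $s$). For the special fiber, the four-term exact sequence
\begin{equation*}
0 \to L \to N \to M(X_0) \to K' \to 0
\end{equation*}
from \S \ref{ss:mx-str} (with $N = H^0 j_! \Omega_{\Xsmth}$ an extension of $\IC(X)$ by $K \cong H^{n-1}(X\setminus\{s\}) \otimes \delta_s$, with $K' \cong \delta_s^{\mu_s}$, and with $L \hookrightarrow K$ a sum of $\delta_s$'s) gives
\begin{equation*}
\chi(\pi_* M(X_0)) \;=\; \chi(\pi_* \IC(X)) + h^{n-1}(X \setminus \{s\}) + \mu_s - \dim L.
\end{equation*}

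The decisive input is then Proposition \ref{p:icfla}, which I expect to yield $\chi(\pi_* \IC(X)) + h^{n-1}(X\setminus\{s\}) + \mu_s = 1 + (-1)^n \mu_s$ by applying the classical Goresky-MacPherson formula for the Euler characteristic of intersection cohomology to the one-point compactification of $X$, combined with the $(n-2)$-connectedness of the link of the isolated singularity \cite{Mil-spch, Ham-ltekr}. Substituting, one obtains $\chi(\pi_* M(X_0)) = \chi(\pi_* M(X_t)) - \dim L$, and comparing with the Koszul identity above forces $\dim T^1 + \dim L = 0$. Since both are non-negative, $T^1 = 0$ (proving the proposition), and as a bonus $L = 0$, which proves Theorem \ref{t:mt-dm-2d} simultaneously. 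The main obstacle is verifying Proposition \ref{p:icfla} with the precise constants displayed above; the rest is bookkeeping once one knows the structure theorem for $M(X)$ recalled in \S \ref{ss:mx-str} and \S \ref{ss:maxquot}.
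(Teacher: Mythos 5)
Your argument is essentially the paper's own proof: localize the torsion to a sum of delta-modules at $s$, compare Euler characteristics of $\pi_*$ along the family using $M(X_t)\cong\Omega_{X_t}$ for $t\neq 0$, expand $\chi(\pi_* M(X_0))$ via the four-term sequence through $N=H^0 j_!\Omega_{\Xsmth}$, and let Proposition \ref{p:icfla} force $\dim T^1=\dim L=0$ (the paper phrases the first step by splitting off the torsion submodule and using flatness of the quotient rather than the Koszul complex, but the bookkeeping is identical, with your $\dim T^1$ and $\dim L$ being the paper's $r$ and $q$). One correction to your constants: since $\pi_i\Omega_{X_t}\cong H^{n-i}_{\tpl}(X_t)$, the generic-fiber value is $\chi(\pi_* M(X_t))=(-1)^n\chi(X_t)=(-1)^n+\mu_s$, not $\chi(X_t)=1+(-1)^n\mu_s$; with this fixed, Proposition \ref{p:icfla} (which reads $\chi(\pi_*\IC(X))=(-1)^n-h^{n-1}(\Xsmth)$) gives exactly $\chi(\pi_*\IC(X))+h^{n-1}(X\setminus\{s\})+\mu_s=(-1)^n+\mu_s$, and your cancellation $\dim T^1+\dim L=0$ goes through verbatim. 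Your remaining terms ($+\dim T^1$, $-\dim L$, $+\mu_s$, $+h^{n-1}$) already use the convention $\chi(\pi_*\delta_s)=1$, so for odd $n$ only the displayed generic-fiber value and the target identity for Proposition \ref{p:icfla} carry the spurious sign twist; the structure of the argument is unaffected.
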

In the remainder of the section we prove this result.  As a first
step, let $M(\mathcal{X})_\tor \subseteq M(\mathcal{X})$ denote the
$\mathcal{O}_D$-torsion.  We claim that $M(\mathcal{X})_\tor$ is a sum of
delta-function $\caD$-modules concentrated at $s$, i.e., $\delta_s$ as
a $\caD_Y$-module (with the action of $\cO_D$ factoring through the evaluation
at $t=0$).

Indeed, consider the localization, call it $M(\mathcal{X} \setminus \{s\})$,
of $M(\mathcal{X})$ to $Y \setminus \{s\}$, i.e., to a family of
$\caD_{Y \setminus \{s\}}$-modules. We claim that this family has no torsion.
 Indeed, the fiber over $t \in D$ of this
localization, call it $M(\mathcal{X} \setminus \{s\})$, 
is $\Omega_{X_t^{\smth}}$ for all $t \in D$ (which is
just $X_t$ for $t \neq 0$), which is simple. Since it is a coherent
$\cO_D \otimes \caD_{Y \setminus \{s\}}$-module (actually already a coherent
$\caD_{Y \setminus \{s\}}$-module), we can conclude that it is torsion-free
over $\cO_D$. Since $M(\mathcal{X})$ is also coherent, its $\cO_D$-torsion
must be of the form $\delta_s^r$ for some $r \geq 0$, as desired.

% Since $M(X_t)$ is simple for $t \neq 0$,
% this torsion must be at $t=0$.  Moreover since $M(\mathcal{X}
% \setminus \{s\})$ is torsion-free (as $M(X \setminus \{s\}) =
% M(X^\smth) = \Omega_{\Xsmth}$), we can conclude that
% $M(\mathcal{X})_\tor$ is supported at $s$, i.e., it is a direct sum of
% copies of $\delta_s$ as a $\caD$-module on $\mathcal{X}$.

Let $M(\mathcal{X})' := M(\mathcal{X})/M(\mathcal{X})_{\tor}$. Then
this is torsion-free over $D$, and hence a flat family of
$\caD_Y$-modules.  Let $M(\mathcal{X})'_t$ denote the fiber at $t$;
this is $M(\mathcal{X})_t \cong (i_t)_* \Omega_{X_t}$ for $t \neq 0$.
By the preceding paragraph, we have an exact sequence
\begin{equation}\label{e:mx-tor-es}
0 \to \delta_s^r \to (i_0)_* M(X) \to M(\mathcal{X})'_0 \to 0,
\end{equation}
for some $r \geq 0$.  The proposition then reduces to showing that $r
= 0$. To do so, we will consider $\pi_* M(X_t)$.

\subsection{Euler-Poincar\'e characteristic of $\pi_* M(X_t)$}
Given a finite-dimensional $\bZ$-graded vector space $V_\bullet$, let 
$\chi(V_\bullet) := \sum_{m \in \bZ} (-1)^m \dim V_m$
denote its Euler-Poincar\'e characteristic. 

Since $M(\mathcal{X})'$ is a flat family, $\chi(\pi_*
M(\mathcal{X})'_t)$ is constant in $t$. Since $X_t$ is smooth and
homotopic to a bouquet of $\mu_s$ spheres for $t \neq 0$, we conclude
that
\begin{equation}
\chi(\pi_* M(\mathcal{X})'_0) = 
\chi(\pi_* M(X_t)) = \mu_s + (-1)^n, \quad t \neq 0.
\end{equation}
By \eqref{e:mx-tor-es}, we can rewrite this as
\begin{equation}\label{chipim+r}
\chi(\pi_* M(X)) = \mu_s + (-1)^n + r.
\end{equation}
The main step is to compute $\chi(\pi_* M(X))$ using the structure of
$M(X)$ from \S \ref{ss:mx-str}.
 First of all, we evidently have
\begin{equation}\label{e:chipim}
\chi(\pi_* M(X)) = \chi(\pi_* \IC(X)) + \mu_s + h^{n-1}(X \setminus \{s\}) - q,
\end{equation}
where $q \geq 0$ is the dimension of the kernel of the morphism $N \to
M(X)$.  The work now reduces to computing $\chi(\pi_*\IC(X))$.
\subsection{Computation of $\chi(\pi_* \IC(X))$}
The goal of this subsection is to prove
\begin{proposition}\label{p:icfla}
$\chi(\pi_* \IC(X)) = (-1)^n - h^{n-1}(\Xsmth)$.
\end{proposition}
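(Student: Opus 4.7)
The plan is to compute $\chi(\pi_*\IC(X))$ from the local topological structure of $X$ at its unique isolated singularity $s$. Under the standing local reduction, $X$ has been taken to be a contractible analytic neighborhood of $s$, so topologically $X$ is homeomorphic to the open cone on the link $L$ of $s$; consequently $X^\smth = X \setminus \{s\}$ deformation retracts onto $L$ and $h^k(X^\smth) = h^k(L)$ for every $k$.

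The key ingredient is the identification $\pi_i \IC(X) \cong IH^{n-i}(X)$ using middle-perversity intersection cohomology, with the same grading convention that gave $\pi_i M(X^\smth) \cong H^{n-i}_\tpl(X^\smth)$ on the smooth locus in \cite[Remark 2.27]{ESdm}; this is forced by $j^* \IC(X) = \Omega_{X^\smth}$ together with the local Goresky--MacPherson characterization of $\IC(X) = j_{!*}\Omega_{X^\smth}$. Next apply the classical cone formula (cf.~\cite[\S 6.1]{GM-iht}): for $X$ an open cone on the smooth $(2n-1)$-manifold $L$,
\[
IH^k(X) \cong \begin{cases} H^k(L), & 0 \le k < n,\\ 0, & k \ge n. \end{cases}
\]
Taking the alternating sum,
\[
\chi(\pi_*\IC(X)) = (-1)^n \sum_{k=0}^{n-1} (-1)^k h^k(L).
\]
The proof then concludes by invoking the Milnor--Hamm connectivity theorem \cite{Mil-spch, Ham-ltekr}: the link of an isolated analytically locally complete intersection singularity of dimension $n \ge 2$ is $(n-2)$-connected, so $h^0(L)=1$, $h^k(L)=0$ for $1 \le k \le n-2$, and only degrees $0$ and $n-1$ contribute. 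Thus
\[
\chi(\pi_*\IC(X)) = (-1)^n\bigl(1 + (-1)^{n-1} h^{n-1}(L)\bigr) = (-1)^n - h^{n-1}(L) = (-1)^n - h^{n-1}(X^\smth),
\]
as claimed.

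The main obstacle is pinning down the grading/shift so that $\pi_i \IC(X)$ truly is $IH^{n-i}(X)$; once this dictionary is set (which it must be, to be compatible with the smooth-case calculation used elsewhere in the paper) the computation is a bookkeeping combination of the cone formula and link connectivity. As an alternative, following the outline in \S \ref{ss:mr-proof-outline}, one can instead form the one-point compactification $\bar X = X \sqcup \{\infty\}$, apply the classical Euler characteristic formula of Goresky--MacPherson to $\pi_*\IC(\bar X)$, and subtract the contribution of the new singular point at $\infty$ (again a cone on $L$); this routes the calculation through a compact variety but uses the same link-theoretic input and arrives at the same answer.
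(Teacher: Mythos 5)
Your proof is correct, and it reaches the formula by a genuinely more direct route than the paper does. You compute $\chi(\pi_*\IC(X))$ locally, identifying $\pi_i\IC(X)$ with $IH^{n-i}(X)$ and applying the cone formula for middle-perversity intersection cohomology of the open cone on the link $L$, so that only $IH^k(X)\cong H^k(L)$ for $k<n$ survives; the $(n-2)$-connectivity of the link then collapses the alternating sum to $(-1)^n-h^{n-1}(L)$, and your arithmetic checks out. The paper instead passes to the one-point compactification $\bar X\cong X\sqcup_{X^\smth}X$, applies the Goresky--MacPherson formula for \emph{compact} varieties with isolated singularities (the form in which the cited reference \cite[\S 6.1]{GM-iht} states it), and recovers $\chi(\pi_*\IC(X))$ via Mayer--Vietoris together with the fact that $\bar X$ is homotopic to the suspension of $X^\smth$ --- this is exactly the alternative you sketch in your last sentence. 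What the paper's detour buys is that it only ever invokes the literally quoted compact statement, at the cost of some bookkeeping (the factor of $2$, the vanishing of $H_n(\bar X^\smth)\to H_n(\bar X)$, and $H_{2n}(X^\smth)=0$); what your route buys is brevity, at the cost of having to fix the normalization $\pi_i\IC(X)\cong IH^{n-i}(X)$ and the closed-supports cone formula yourself. You correctly flag the grading issue as the one delicate point; note that for the Euler characteristic the choice between closed and compact supports is immaterial, and your convention is the one forced by consistency with $\pi_i\Omega_{X_t}\cong H^{n-i}_{\tpl}(X_t)$ in the smooth case, so the argument is sound as written. Both proofs ultimately rest on the same topological input, namely the Milnor--Hamm connectivity of the link.
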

\begin{proof}
We break this into steps:
\begin{enumerate}
\item Let $\bar X$ be the one-point compactification of $X$.  Up
to choosing $X=U_s$ a small enough neighborhood of $s$, we have
a homeomorphism $\Xsmth= U_s \setminus \{s\} \cong (0,1) \times L$ for $L$
(the link of the singularity) a manifold of real dimension $2 \dim_\bC X - 1$.
Thus $\bar X \cong X \sqcup_{X^{\smth}} X$ as a topological space.  
Moreover, $\bar X$ is homotopic to the suspension of $\Xsmth$.
\item We use
  the classical formula of \cite[\S 6.1]{GM-iht} (cf.~also, e.g.,
  \cite[(1)]{Dur-ihBn}, which is stated for algebraic varieties but
  extends to the analytic case): If $\bar X$ is a compact analytic
  variety of dimension $n$ with isolated singularities, with smooth
  locus $\bar X^{\smth}$:
\begin{equation}\label{e:gm-fla}
\operatorname{IH}_i(\bar X) := \pi_{n - i} \IC(\bar X) = 
\begin{cases}
H_i(\bar X), & \text{if $i > n$}, \\
\operatorname{Im}(H_n(\bar X^\smth) \to H_n(\bar X)), & \text{if $i = n$}, \\
H_i(\bar X^\smth), & \text{if $i < n$}.
\end{cases}
\end{equation}
\item 
  We apply the Mayer-Vietoris sequence to $\bar X = X \sqcup_{\Xsmth}
  X$ to compute $\pi_* \IC(\bar X)$ in terms of $\pi_* \IC(X)$
  and $\pi_* \IC(\Xsmth) = H_{n-*}(\Xsmth)$:
\begin{equation}\label{e:ihh-diff}
  \chi(\pi_* \IC(\bar X)) = 2 \chi(\pi_* IC(X)) - (-1)^n \chi(\Xsmth),
\end{equation}
where $\chi(\Xsmth) = \chi(H_*(\Xsmth))$ is the Euler characteristic
of $\Xsmth$.
\item Now apply \eqref{e:gm-fla} to the LHS of
  \eqref{e:ihh-diff}. Note that $H_n(\bar X^\smth) \to H_n(\bar X)$ is
  zero since it factors through $H_n(X)$, which is zero as $n =
  \dim_{\bC} X > 0$ and $X$ is contractible.  We conclude that
\begin{equation}
  \sum_{i=0}^{n-1} (-1)^{n-i} h^i(\Xsmth) + \sum_{i=n+1}^{2n} (-1)^i h^i(\bar X)
  =
  2 \chi(\pi_* \IC(X))
  - \sum_{i=0}^{2n} (-1)^{n-i} h^i(\Xsmth).
\end{equation}
\item Using that $\bar X$ is homotopic to the suspension of $\Xsmth$
  (hence $H_i(\bar X) = H_{i-1}(\Xsmth)$ for $i \geq 2$), and that
  $H_{2n}(\Xsmth) = 0$ as $\Xsmth$ is a noncompact real $2n$-manifold,
  the above simplifies to
\begin{equation}
  \chi(\pi_*\IC(X)) = \sum_{i=0}^{n-1} (-1)^{n-i} h^i(\Xsmth).
\end{equation}
\item Finally, we apply the fact that, for $X$ an analytically locally
  complete intersection of dimension $n$ with an isolated singularity,
  the link of the singularity is $(n-2)$-connected (see
  \cite{Mil-spch} and \cite[Korollar 1.3]{Ham-ltekr}). Therefore
  $H_{i}(\Xsmth) = 0$ for $1 \leq i \leq n-2$.  We obtain the
  proposition. \qedhere
\end{enumerate}
\end{proof}

\subsection{Proof of Theorems \ref{t:mt-dm-2d} and
  \ref{t:mt-dm-2d-fl}}\label{ss:cp}
Putting together Proposition \ref{p:icfla} and \eqref{e:chipim}, we obtain
\begin{equation}\label{e:chipim-q}
\chi(\pi_* M(X)) = (-1)^n + \mu_s - q.
\end{equation}
On the other hand, comparing this with \eqref{chipim+r} and the fact
that $q, r \geq 0$, we obtain
\begin{equation}
  q=r=0.
\end{equation}
Since $r=0$, this completes the proof of Proposition \ref{p:tf}, as
remarked there, and hence also Theorem \ref{t:mt-dm-2d-fl}, as also
pointed out there.  Since $q=0$, by the definition of $q$ in
\eqref{e:chipim} and the comment after \eqref{e:m-seq}, Theorem
\ref{t:mt-dm-2d} is proved as well.

\section{Proof of Theorem \ref{t:maxind-con} and Proposition
  \ref{p:maxind-qt} for cones over smooth curves in
  $\bP^2$}\label{s:mmax-pf-2d}
For concreteness, we first prove these results in the case that
$X\subseteq \bC^3$ is the cone over a smooth curve in $\bP^2$ (with
vertex $s=0$), even though the general proof is essentially the same
(and for most of it, we will copy and adapt the proof given here).  We
assume that the Poisson bivector $\Xi_X$ has weight $d$, and hence
that $X$ is the zero locus of a homogeneous polynomial in $\bC^3$ of
weight $d+3$.

We prove Proposition \ref{p:maxind-qt} first, and use it in
the proof of Theorem \ref{t:maxind-con}.  

\subsection{Proof of Proposition \ref{p:maxind-qt}}
As recalled in \S \ref{ss:maxquot}, the maximal quotient of $M(X)$
supported at $0$ is canonically identified with $(\hat
\cO_{X,0})_{H(\hat X_0)}$, which is just $\HP_0(\cO_{X})$ since
$\cO_{X}$ is positively graded. Now, Hamiltonian vector fields all
have weight at least $d+1$.  As a result, $\HP_0(\cO_{X})= (\cO_X)_m$
for all $m \leq d$.

\subsection{Proof of Theorem
  \ref{t:maxind-con}}\label{ss:maxind-con-pf-2d}
For a contradiction, suppose that there were a direct sum
decomposition
\begin{equation}\label{e:mxd-decomp-2d}
M(X)_d = M(X)_d' \oplus \delta_0.
\end{equation}
This would induce a decomposition on solutions valued in every
$\caD$-module $N$ on $X$,
\begin{equation}\label{e:mxd-sol-decomp-2d}
  \Hom(M(X)_d, N) \cong \Hom(M(X)_d', N) \oplus \Hom(\delta_0, N).
\end{equation}
Let $N$ be the space of smooth, compactly supported distributions on
$X$, i.e., smooth, compactly supported distributions on $\bC^3$
scheme-theoretically supported on $X$.  Let $w: \delta_0 \to N$ be the
canonical inclusion of the delta function $\caD$-module, i.e., $w(1)
\in N$ is the delta function distribution, where $1 \in \delta_0$ is
the canonical generator.  Let $\Eu$ denote the holomorphic Euler
vector field on $X$.  We will prove the following result.  Let $1 \in
M(X)$ be the canonical generator.  Note that $\Eu$ acts as an
endomorphism of $M(X)$, $1 \cdot \Phi \mapsto 1 \cdot \Eu \cdot \Phi$
for all $\Phi \in \caD_X$, so let $T_{\Eu}: M(X) \to M(X)$ denote this
endomorphism.
\begin{lemma}\label{l:maxind-con-phi-2d}
  There is a map $\Phi: \Hom(M(X)_d,\delta_0) \to \Hom(M(X)_d, N)$ such
  that $\Phi(v)\circ (T_{\Eu}-d) = w \circ v$ for all $v \in
  \Hom(M(X)_d,\delta_0)$.
\end{lemma}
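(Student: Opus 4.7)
A $\caD_X$-module morphism $\Phi(v) \colon M(X)_d \to N$ is determined by the single distribution $T_v := \Phi(v)(1) \in N$, subject to two constraints coming from the defining relations of $M(X)_d$: (i) $T_v \cdot \xi = 0$ for every Hamiltonian vector field $\xi \in H(X)$, and (ii) $T_v$ lies in the weight-$d$ piece under the $\bC^\times$-action. Under this identification, the desired identity $\Phi(v) \circ (T_\Eu - d) = w\circ v$ translates into the single distributional equation
\[
T_v \cdot (\Eu - d) = w(v(1)) \in N.
\]
My plan is to produce such $T_v$ by regularizing the symplectic volume form $\Xi_X^{-1} \wedge \overline{\Xi_X^{-1}}$, which formally satisfies (i) and has weight $d$ on $X^\smth$ but diverges at the singular point.

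Fix a smooth compactly supported cutoff $\chi$ on $\bC^3$ equal to $1$ near $0$. For $\operatorname{Re} s \gg 0$, the holomorphic family of distributions
\[
\widetilde T_s(\phi) := \int_{X^\smth} |x|^{2s}\,\chi(x)\,\phi(x)\; \Xi_X^{-1}\wedge\overline{\Xi_X^{-1}}, \qquad \phi \in C_c^\infty(\bC^3),
\]
is well defined, and by the theory of Bernstein--Sato polynomials extends meromorphically in $s$ to all of $\bC$. I would define $T_v$ to be a linear-in-$v$ scalar multiple of the finite part $\operatorname{FP}_{s=0} \widetilde T_s$, the scalar being determined by comparing the residue of $\widetilde T_s \cdot (\Eu - d)$ at $s=0$ with $w(v(1))$.

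Three properties must then be verified, each reducing to an integration-by-parts identity. First (weight): a scaling argument using the $(-d)$-homogeneity of $\Xi_X^{-1}$ shows $\widetilde T_s$ has weight $d + 2s$, so its finite part at $s = 0$ has weight $d$. Second (Hamiltonian invariance): since $L_{\xi_g}(\Xi_X^{-1} \wedge \overline{\Xi_X^{-1}}) = 0$ on $X^\smth$, integration by parts gives $\widetilde T_s \cdot \xi_g = s \cdot R_s^g$ for a distribution family $R_s^g$ regular at $s = 0$, and the factor $s$ is killed by finite-part extraction. Third (primitive equation): a parallel calculation against $\Eu - d$ produces $\widetilde T_s \cdot (\Eu - d) = s \cdot R_s + (\text{residue at } s = 0)$, where the residue is a weight-$d$ Hamiltonian-invariant distribution supported at $\{0\}$; the normalization of $T_v$ is chosen so that this residue equals $w(v(1))$.

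The principal obstacle is step three: pinning down the residue as a pure scalar multiple of $\delta_0$ rather than a derivative distribution $\partial^I \delta_0$ supported at $0$. This requires combining the weight-$d$ constraint with Hamiltonian invariance to cut out a one-dimensional span in the space of distributions supported at the isolated singularity, and extracting the constant via a direct integration-by-parts identity along the Euler direction. Linearity of $v \mapsto \Phi(v)$ is immediate from linearity of the construction in the overall scalar $\alpha_v$ determined by $v$, completing the proof.
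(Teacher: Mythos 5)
There is a genuine gap: your $\Phi$ has rank one, but the lemma forces $\Phi$ to be injective on a space that is in general of dimension greater than one. Since $w$ is injective and $1$ generates $M(X)_d$, the maps $w \circ v$ are linearly independent as $v$ runs over a basis of $\Hom(M(X)_d,\delta_0)$; this space is dual to the weight-$d$ part of the maximal quotient of $M(X)$ supported at $0$, i.e.\ to $(\cO_X)_d$, whose dimension is the genus $g$ of the curve, and $g>1$ as soon as $d\geq 1$. Any $\Phi$ with $\Phi(v)\circ(T_{\Eu}-d)=w\circ v$ for all $v$ must therefore have rank at least $g$. Your construction sets $T_v=\alpha_v\cdot\operatorname{FP}_{s=0}\widetilde T_s$ for a single fixed distribution, so every $\Phi(v)\circ(T_{\Eu}-d)$ is a scalar multiple of one element of $N$; this can reproduce $w\circ v$ only for $v$ in a line, which suffices only for the cubic cone ($d=0$, $g=1$). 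Relatedly, your ``principal obstacle'' --- pinning the residue down to a pure multiple of the delta function rather than a derivative of it --- aims at the wrong target: the space of weight-$d$ Hamiltonian-invariant distributions supported at $0$ is $g$-dimensional, and for general $v$ the element $v(1)\in\delta_0$ is an order-$d$ derivative-of-delta distribution (it pairs nontrivially with $(\cO_X)_d$), so the residue should \emph{not} be a pure multiple of the delta function.

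The missing idea is to twist the regularized volume density by antiholomorphic functions: the paper works with the $g$-dimensional family of functionals $\phi_Q(\alpha)=\int_X\alpha\,\omega\wedge\bar Q\bar\omega$, where $\omega=\Xi_X^{-1}$ and $Q$ ranges over $(\cO_X)_d$, and then uses the Hermitian pairing $\langle P,Q\rangle=\int_{S_1}P\bar Q\,\omega\wedge\bar\omega/dr$ to obtain an antilinear isomorphism $(\cO_X)_d^*\cong(\cO_X)_d$, turning $Q\mapsto\phi_Q$ into the required $\Phi$ on all of $\Hom(M(X)_d,\delta_0)$. Your remaining ingredients are sound in spirit and parallel the paper's: the finite-part/analytic-continuation regularization is a legitimate substitute for the paper's subtraction of the degree-$\leq d$ Taylor jet via $\pr^q_{>d}$, and the weight and Hamiltonian-invariance arguments (invariance of $\omega$ under Hamiltonian flow plus a weight count to kill the boundary terms) are essentially the ones the paper uses. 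But without the $\bar Q$-twist the construction cannot produce enough solutions to close the argument.
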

We prove the lemma in \S \ref{ss:l-maxind-con-phi-pf-2d} below.  Using
the lemma, we conclude the theorem as follows.  First, note that
$\ad(T_{\Eu})$ acts semisimply on $\End(M(X))$, since $\ad(\Eu)$ acts
semisimply on global sections of $\caD_X$, and $\End(M(X))$ is a
homogeneous subquotient thereof: in more detail,
\[
\End(M(X)) =
(\Gamma(X,H(X) \cdot \caD_X) \setminus \Gamma(X,\caD_X))^{H(X)}.
\]
Moreover, $\ad(T_{\Eu})$ acts with eigenvalue $m-k$ on $\Hom(M(X)_k,
M(X)_m)$ for all $m,k$, and hence it acts there by $(m-k) \cdot \Id$.
In particular, $T_{\Eu}$ is central in $\End(M(X)_m)$ for all $m$.
Thus, $T_{\Eu}$ preserves all direct summands of $M(X)_m$ for all $m$.

Suppose now that $M(X)_d$ had a nonzero direct summand, $K$, supported
at the origin.  Then, $T_{\Eu}$ preserves $K$. Since $\Eu$ acts
semisimply on global sections of any $\caD$-module supported at the
origin, it follows that $T_{\Eu}$ is a semisimple endomorphism of
$K$. Since $T_{\Eu}-d$ is a nilpotent endomorphism of $M(X)_d$, it
follows that $T_{\Eu}-d$ restricts to the zero endomorphism of $K$.

Now assume that $K \cong \delta_0$, up to taking a further summand.
Let $v: M(X)_d \onto \delta_0$ be the corresponding projection. Then
Lemma \ref{l:maxind-con-phi-2d} implies that $\Phi(v) \circ
(T_{\Eu}-d)$ is nonzero and factors through $v$, and in particular,
$\Phi(v) \circ (T_{\Eu}-d)$ does not vanish on the summand $K$.  This
contradicts the fact that $T_{\Eu}-d$ restricts to zero on $K$.  This
proves the theorem.

\subsection{Proof of Lemma
  \ref{l:maxind-con-phi-2d}}\label{ss:l-maxind-con-phi-pf-2d}
Let $\omega := \Xi_{X}^{-1}$ be the meromorphic volume
form on $X^\smth$ (which is holomorphic on $X^\smth$).  Then $\omega$
has weight $-d$.

Let $S_t := \{x \in X \mid |x| = t\} \subseteq \bC^3$ be the
intersection of $X$ with the (five-dimensional) sphere of radius $t$,
and $B_t := \{x \in X \mid |x| \leq t\}$ the corresponding closed
balls. Then $S_t$ and $B_t$ are compact for all $t \in \bR_{\geq 0}$.

For every $Q \in (\cO_{X})_d$, consider the partially defined
functional on $C_c^{\infty}(X)$:
\[
\phi_Q: \alpha \mapsto \int_{X} \alpha \omega \wedge \bar Q \bar
\omega.
\]
For all $m \geq 0$, let $(C_c^\infty(X))_{> m}$ be the subspace of
(smooth compactly-supported) functions all of whose derivatives up to
and including order $m$ vanish.
\begin{lemma}\label{l:phiq-conv}
The functional $\phi_Q$ converges for $\alpha \in (C_c^\infty)_{>d}$.
\end{lemma}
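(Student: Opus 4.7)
The plan is to reduce the convergence of $\phi_Q$ to a power-counting estimate near the singular point $s = 0$, using the $\bC^\times$-homogeneity of $\omega$ and $Q$. Since $\omega$ is holomorphic on $X^\smth$ and $\alpha$ is smooth and compactly supported, the integrand $\alpha \, \omega \wedge \bar Q \bar\omega$ is smooth on $X^\smth$, so its integral over any compact subset of $X^\smth$ is finite. Thus the only possible obstruction is the behavior near $0$, and it suffices to estimate the integral on a small ball $B_R := X \cap \{|y| \leq R\}$.

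The main step is to introduce polar coordinates $y = r\xi$ on $B_R$, with $r = |y|$ and $\xi$ ranging over the link $L$, and track the weight of the integrand under the real scaling $\sigma_r \colon y \mapsto r y$. Since $\omega$ has weight $-d$ and $Q$ has weight $d$, and since complex conjugation sends holomorphic weight $k$ to anti-holomorphic weight $k$, a direct calculation gives
\[
\sigma_r^*(\bar Q \, \omega \wedge \bar\omega) \;=\; r^{-d}\,(\bar Q \, \omega \wedge \bar\omega).
\]
Writing this real top form as $h \cdot \nu$, where $\nu$ is the Euclidean volume on $X^\smth$ (for which $\sigma_r^* \nu = r^4 \nu$ since $\dim_{\bR} X = 4$), the coefficient satisfies $h(r\xi) = r^{-d-4} h(\xi)$, and $h|_L$ is bounded because it is smooth on the compact link.

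In polar coordinates, $\nu = r^3 \, dr \, d\mu_L$, so near $0$ the integral takes the form
\[
\int_0^R \int_L \alpha(r\xi) \, r^{-d-1} \, h(\xi) \, d\mu_L(\xi) \, dr,
\]
with $d\mu_L$ the induced volume on $L$. For $\alpha \in (C_c^\infty(X))_{> d}$, Taylor's theorem applied in the ambient $\bC^3$ gives $\alpha(r\xi) = O(r^{d+1})$ uniformly in $\xi$, so the integrand is $O(1)$ and the integral converges.

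The main subtlety is the weight bookkeeping: the form $\bar Q \, \omega \wedge \bar\omega$ is not homogeneous for the full $\bC^\times$-action, since its holomorphic and anti-holomorphic weights differ ($-d$ and $0$ respectively), but it is homogeneous of weight $-d$ under the positive-real scaling $\sigma_r$, and this is precisely what is needed for the radial estimate. Once this observation is set up, the rest is an elementary one-variable power-counting argument.
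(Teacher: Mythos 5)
Your proof is correct and is essentially the same argument as the paper's: both reduce to a power-counting estimate near $s=0$ using the homogeneity of $\bar Q\,\omega\wedge\bar\omega$ of weight $-d$ under real scaling together with the Taylor bound $|\alpha|=O(r^{d+1})$ for $\alpha\in (C_c^\infty(X))_{>d}$. The only difference is bookkeeping — you use polar coordinates and the induced Euclidean volume on the link, while the paper slices by spheres $S_t$ and observes that the integral of the scale-invariant form $|z|^{d+1}|Q|\,\omega\wedge\bar\omega/dr$ over $S_t$ is independent of $t$ — and these amount to the same computation.
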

\begin{proof} Let $r: \bC^3 \to \bR_{\geq 0}$ be the radial function
  $r(z)=|z|$.  We can rewrite
\[
\phi_Q(\alpha) = \int_{0}^\infty dt \int_{S_t} \alpha \bar Q
(\omega \wedge \bar \omega)/dr.
\]
Then the above integral converges absolutely, since for $C_\alpha > 0$
such that $|\alpha(z)| < C_\alpha |z|^{d+1}$, and all $t > 0$,
\[
|\int_{S_t} \alpha \bar Q \omega \wedge \bar \omega/dr| < C_\alpha
|\int_{S_t} |z|^{d+1} |Q| \omega \wedge \bar \omega/dr| = C_\alpha
|\int_{S_1} |z|^{d+1} |Q| \omega \wedge \bar \omega/dr|.
\]
Letting $C$ equal the right-hand side and $R > 0$ be such that
$\alpha$ is supported in $B_R$, we obtain $|\phi_Q(\alpha)| < C \cdot
R$, which proves the absolute convergence.
\end{proof}
Next, extend $\phi_Q$ arbitrarily to a functional on all of
$C_\infty^c(X)$. Note that the difference between any two such
extensions annihilates $C_c^{\infty}(X)_{>d}$, so is a linear
combination of derivatives of the delta distribution at $0$ of orders
$\leq d$.  We claim that $\phi_Q$ is annihilated by every Hamiltonian
vector field $\xi$ on $X$, i.e., $\phi_Q \in \Hom(M(X), N)$.  It
suffices to let $\xi$ be homogeneous, say of weight $m$. Since $\Xi$
has weight $d$, it follows that $m > d$.

First, note that $\phi_Q \cdot \xi$ is supported at the origin, since
$\omega$ is invariant under Hamiltonian flow on $X^\smth$.  Now,
$\epsilon := \phi_Q \cdot(\Eu-d)$ is supported at the
origin. Moreover, it annihilates $C_c^{\infty}(X)_{>d}$, and hence
$\epsilon$ is a sum of homogeneous distributions supported at $0$ of
weights $\geq -d$. Hence $\epsilon \cdot \xi$ is a sum of
distributions supported at $0$ of weights $\geq m-d > 0$, and hence is
zero.  Thus
\[
0 = \phi_Q \cdot (\Eu-d) \cdot \xi = (\phi_Q \cdot \xi) \cdot (\Eu+m-d).
\]
Since $m-d > 0$ and all distributions supported at $0$ are linear
combinations of distributions of nonpositive weights, it follows that
$\phi_Q \cdot \xi = 0$, as desired.

We also saw above that $\phi_Q \cdot (\Eu-d)$ is supported at the
origin and annihilates $C_c^{\infty}(X)_{>d}$. Up to our choice of
$\phi_Q$, i.e., adding a linear combination of derivatives of the
delta function distribution at $0$ of weights $\geq -d$, we can assume
that $\phi_Q \cdot (\Eu-d)$ has weight $-d$, and hence $\phi_Q \cdot
(\Eu-d)^2 = 0$.  Thus, $\phi_Q \in \Hom(M(X)_d, N)$. Note that this
uniquely determines $\phi_Q$ up to an element of $(\cO_X)_d^* \cong
\Hom(M(X)_d, \delta)$.  By picking a basis of $(\cO_X)_d$, we can
extend the assignment $Q \mapsto \phi_Q$ to a linear map $(\cO_X)_d
\to \Hom(M(X)_d, N)$, and any two such maps differ by a linear map
valued in $\Hom(M(X)_d, \delta)$.

Consider next the Hermitian pairing on $\cO_X$,
\begin{equation}
  \langle P, Q \rangle := \int_{S_1} P \bar Q \omega \wedge \bar \omega / dr,
\end{equation}
which restricts to nondegenerate pairings $(\cO_X)_m \otimes (\cO_X)_m
\to \bC$ for all $m \geq 0$.  We obtain an antilinear isomorphism
$\iota: \Hom(M(X)_d, N) = (\cO_X)_d^*\iso (\cO_X)_d$, so that $\langle
P, \iota(v)\rangle = v(P)$.  Composing this with the linear map $Q
\mapsto \phi_Q$ above, we obtain a linear map $\Phi: \Hom(M(X)_d,
\delta) \to \Hom(M(X)_d, N)$. Any two such choices of $\Phi$ differ by
an element of $\Hom(M(X)_d, \delta)$.

We now claim that for every such $\Phi$, Lemma
\ref{l:maxind-con-phi-2d} is satisfied, up to rescaling $\Phi$. Since
$\Hom(M(X)_d, \delta)$ is annihilated by $T_{\Eu}-d$ (i.e., all
solutions of $M(X)_d$ supported at $0$ are annihilated by $\Eu-d$), it
suffices to show this for any particular $\Phi$.  By our definition of
$\Phi$, we need to prove that the following holds up to a constant
factor:
\begin{equation}
  \phi_{Q} \cdot (\Eu-d) \cdot P = 
  \langle P, Q \rangle w(1), \quad \forall P \in (\cO_X)_d.
\end{equation}

To prove this, we construct a particular $\phi_Q$ as follows.  Let
$\beta \in C_c^{\infty}(X)_{> d}$ be any function such that
$\beta(0)=1$, and assume it is supported in the unit ball $B_1$.  Let
$\beta_q$ be the function $\beta_q(x) := \beta(q^{-1} \cdot x)$, which
is supported in $B_q$.  For all $q > 0$, consider the projection to
$C_c^\infty(X)_{> d}$ along $\bigl( (\cO_X)_{\leq d} \otimes
(\overline{\cO_X})_{\leq d} \bigr) \cdot \beta_q$,
\[
\pr^q_{> d}: C_c^{\infty}(X) \to C_c^\infty(X)_{> d}.
\]
Then, for all $q$, we extend
$\phi_Q$ to the functional
\[
\phi_{Q,q} := \phi_Q \circ \pr^q_{> d}.
\]

Let $\epsilon := \phi_{Q,q} \cdot (\Eu-d) \in \Hom(M(X)_d, \delta)$,
which does not depend on $q$.  Lemma \ref{l:maxind-con-phi-2d} follows
from the following result (once we rescale $\Phi$ by $-2$):
\begin{lemma} \label{l:phiq-2d}
For all $P \in (\cO_X)_d$, 
\begin{equation}
\epsilon \cdot P = -
\frac{1}{2} \langle P, Q \rangle w(1).
\end{equation}
\end{lemma}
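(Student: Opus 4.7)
The plan is to evaluate $\epsilon \cdot P$ as a distribution on $X$ by applying it to an arbitrary test function $\alpha \in C_c^\infty(X)$ and reducing to a boundary Stokes calculation on small spheres $S_\delta$ around $0$.

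First, using the right-$\caD$-module convention $(T \cdot \Phi)(\alpha) = T(\Phi(\alpha))$, one has $(\epsilon \cdot P)(\alpha) = \phi_{Q,q}((\Eu - d)(P\alpha))$. Since $\Eu(P) = dP$ by homogeneity, $(\Eu - d)(P\alpha) = P\Eu(\alpha)$. Because $P$ vanishes to order $d$ at $0$ and $\Eu(\alpha)$ vanishes at $0$, the product $P\Eu(\alpha)$ lies in $C_c^\infty(X)_{>d}$, on which $\pr^q_{>d}$ acts as the identity. Thus $(\epsilon \cdot P)(\alpha) = \phi_Q(P\Eu(\alpha))$, an absolutely convergent integral by Lemma \ref{l:phiq-conv}.

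Next, I compute $\phi_Q(P\Eu(\alpha))$ via Stokes on $B_R \setminus B_\delta$, where $R > 0$ is large enough that $\operatorname{supp}(\alpha) \subseteq B_R$. Combining $L_\Eu P = dP$, $L_\Eu \bar Q = 0$ (since $\bar Q$ is antiholomorphic while $\Eu$ is holomorphic), and $L_\Eu(\omega \wedge \bar\omega) = -d\,\omega \wedge \bar\omega$ via Leibniz yields the identity
\[
L_\Eu(P\alpha\bar Q\, \omega \wedge \bar\omega) = P\Eu(\alpha)\,\bar Q\, \omega \wedge \bar\omega.
\]
Cartan's formula $L_\Eu = d\,i_\Eu$ on top forms together with Stokes (with $\alpha$ vanishing on $S_R$) then give
\[
\int_{B_R \setminus B_\delta} P\Eu(\alpha)\bar Q\, \omega \wedge \bar\omega = -\int_{S_\delta} i_\Eu(P\alpha\bar Q\, \omega \wedge \bar\omega).
\]
Using $i_\Eu \bar\omega = 0$ by type, the boundary integrand equals $P\alpha\bar Q\,(i_\Eu \omega) \wedge \bar\omega$. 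By scale-invariance of $P\bar Q\,(i_\Eu\omega) \wedge \bar\omega$ under $r\partial_r$ (total $L_{r\partial_r}$-weight zero) and $\alpha(\delta y) \to \alpha(0)$, the right side converges as $\delta \to 0$ to $-\alpha(0)\,I_1(P,Q)$, where $I_1(P,Q) := \int_{S_1} P\bar Q \,(i_\Eu\omega) \wedge \bar\omega$.

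The heart of the proof is the identity $I_1(P,Q) = \frac{1}{2}\langle P, Q\rangle$. Writing the real Euler field as $r\partial_r = \Eu + \bar\Eu$, one has
\[
i_{r\partial_r}(\omega \wedge \bar\omega) = (i_\Eu\omega) \wedge \bar\omega + \omega \wedge (i_{\bar\Eu}\bar\omega);
\]
restriction to $S_1$ gives $(\omega \wedge \bar\omega)/dr|_{S_1}$, so integrating against $P\bar Q$ yields $I_1(P,Q) + I_2(P,Q) = \langle P, Q\rangle$, where $I_2$ denotes the integral of the second summand. On the other hand, $i_{\Eu - \bar\Eu}(\omega \wedge \bar\omega) = -i\, i_R(\omega \wedge \bar\omega)$, with $R := i(\Eu - \bar\Eu)$ the $U(1)$-rotation vector field; since $R$ preserves $|x|$ it is tangent to $S_1$, so $i_R(\omega \wedge \bar\omega)|_{S_1} = 0$. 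Hence $I_1 - I_2 = 0$, giving $I_1 = \frac{1}{2}\langle P,Q\rangle$. Therefore $(\epsilon \cdot P)(\alpha) = -\frac{1}{2}\langle P, Q\rangle\,w(1)(\alpha)$, proving the lemma. The main technical obstacle is verifying the symmetry $I_1 = I_2$ via the rotational tangency argument, which requires careful bookkeeping of holomorphic/antiholomorphic types and Lie-derivative weights; everything else is a direct Stokes computation.
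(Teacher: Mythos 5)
Your proof is correct, but it takes a genuinely different route from the paper's. The paper first uses that $\epsilon \cdot P$ is already known to be a multiple of $w(1)$, so that it suffices to test against a single function with nonzero value at the origin; it chooses the radial function $H = h(|x|^2)$, for which $\Eu(H) = |x|^2 h'(|x|^2)$, and scale-invariance of $P\bar Q\,\omega\wedge\bar\omega$ collapses everything to the one-variable integral $\int_0^\infty t\,h'(t^2)\,dt = -\tfrac{1}{2}h(0)$; that substitution is where the paper's factor $\tfrac{1}{2}$ arises. You instead evaluate on an arbitrary test function, integrate by parts via $L_{\Eu}(P\alpha\bar Q\,\omega\wedge\bar\omega) = P\Eu(\alpha)\bar Q\,\omega\wedge\bar\omega$ and Stokes on $B_R\setminus B_\delta$, and identify the boundary term on $S_\delta$ with $-\alpha(0)I_1(P,Q)$; your $\tfrac{1}{2}$ comes from the decomposition $r\partial_r = \Eu + \overline{\Eu}$ together with the tangency to $S_1$ of the $U(1)$-rotation field $i(\Eu-\overline{\Eu})$, which forces $I_1 = I_2 = \tfrac{1}{2}\langle P,Q\rangle$. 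Your route is somewhat longer but buys a direct computation of $\epsilon\cdot P$ against all test functions at once (so you never need the a priori reduction to a single radial $H$), and the sphere identity $I_1 = \tfrac{1}{2}\langle P,Q\rangle$ is a clean geometric statement in its own right. Two minor points: the Koszul sign $(-1)^n$ omitted in your expansion of $i_{\Eu}(\omega\wedge\bar\omega)$ is harmless since $n=2$ here (and would in any case cancel between your two displayed identities); and your observation that $P\Eu(\alpha)\in C_c^\infty(X)_{>d}$, so that $\pr^q_{>d}$ fixes it and the computation is independent of $q$, is exactly the right justification for the first step.
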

\begin{proof}
  Since $\epsilon \cdot (\Eu-d) \cdot P$ is a multiple of $w$ for all
  $P \in (\cO_X)_d$, it is enough to show the identity after
  evaluating on a single function $H \in C_c^\infty(X)$ with $H(0)
  \neq 0$.  Let $h \in C_c^\infty(\bR)$ be a function such that
  $h(0)\neq 0$, and let $H(x) := h(|x|^2)$ be the corresponding
  spherically symmetric function on $X$. Then,
\[
\epsilon (P\cdot H) = \int_{X} \bar Q P \Eu \pr^q_{> d}(H) \omega
\wedge \bar \omega,
\]
for all choices of $q$. Taking the limit as $q \to 0$, this becomes
\[
\int_{0}^\infty dt \cdot t \cdot h'(t^2) \cdot \langle P, Q \rangle =
- \frac{1}{2} \langle P, Q \rangle h(0) = -\frac{1}{2} \langle P, Q
\rangle H(0). \qedhere
\]
\end{proof}

\section{Proof of Theorem \ref{t:maxind-con} and Proposition
  \ref{p:maxind-qt} in generality}\label{s:mmax-pf}
% As before, we prove Proposition \ref{p:maxind-qt} first, and use it
% in the proof of Theorem \ref{t:maxind-con}.

\subsection{Proof of Proposition \ref{p:maxind-qt}}
As recalled in \S \ref{ss:maxquot}, the maximal quotient of $M(X)$
supported at $s$ is canonically identified with $(\hat
\cO_{X,s})_{H(\hat X_s)}$. Now, $H(\hat X_s)$ is obtained by
contracting $\Xi$ with differential $(n-2)$-forms on the formal
neighborhood $\hat X_s$.  Since $\hat X_s$ is conical by assumption,
differential $(n-2)$-forms are convergent sums of homogeneous forms of
positive weight.  Therefore, $H(\hat X_s)_m = 0$ for $m \leq d$.  As a
result, $((\hat \cO_{X,s})_{H(\hat X_s)})_m = (\hat \cO_{X,s})_m$ for
all $m \leq d$.

\subsection{Proof of Theorem \ref{t:maxind-con}}
We begin as in \S \ref{ss:maxind-con-pf-2d}. For convenience in
referring to that section, we let $0:=s$, i.e., consider $s$ to be the
origin of our conical variety $X$.  Assume for a contradiction that we
have a decomposition \eqref{e:mxd-decomp-2d}, which induces the
decomposition \eqref{e:mxd-sol-decomp-2d} on solutions valued in
$\caD$-modules $N$.

% For a contradiction, suppose that there were a direct sum decomposition
% \begin{equation}\label{e:mxd-decomp}
% M(X)_d = M(X)_d' \oplus \delta_s.
% \end{equation}
% This would induce a decomposition on solutions valued in every
% $\caD$-module $N$ on $X$,
% \begin{equation}\label{e:mxd-sol-decomp}
% \Hom(M(X)_d, N) \cong \Hom(M(X)_d', N) \oplus \Hom(\delta_s, N).
% \end{equation}
As before, let $N$ be the space of smooth, compactly supported
distributions on $X$ (since $X$ is conical, it embeds into an affine
space, and we can define this space as the smooth, compactly supported
distributions on the ambient space which are scheme-theoretically
supported at $X$, i.e., annihilate the polynomial functions vanishing
on $X$; this defines $N$ independently of the embedding up to
canonical isomorphism).  Just as before, $w: \delta_0 \to N$ denotes
the canonical inclusion of the delta function $\caD$-module, $\Eu$
denotes the holomorphic Euler vector field on $X$, and $1 \in M(X)$
denotes the canonical generator. Also, as before, $\Eu$ induces an
endomorphism $T_{\Eu}: M(X) \to M(X)$.

Below we will prove that Lemma \ref{l:maxind-con-phi-2d}
extends to this setting.  Then the theorem
follows from the lemma just as before.

\subsection{Proof of Lemma \ref{l:maxind-con-phi-2d} in generality}\label{ss:l-maxind-con-phi-pf}
Let $\omega := \Xi_{X}^{-1}$ be the meromorphic volume form on
$X^\smth$ (which is holomorphic on $X^\smth$).  Then $\omega$ has
weight $-d$.

Let us assume that $X$ is embedded into an affine space $\bA$ with
homogeneous (positive integral weight) coordinate functions. This can
be done, for example, by taking sufficiently many general homogeneous
functions $x_i \in \cO_X$, of weights $a_i \geq 1$.  Let $a$ be the
least common multiple of the weights $a_i$ of the $x_i$. Then we may
define a radial function $r \in C^\infty(X^{\smth})$ by $r :=
\bigl(\sum_i |x_i|^{2a/a_i}\bigr)^{1/2a}$, which extends continuously
to $X$ via $r(0)=0$, and is smooth on $X^{\smth}$. Moreover, $r^{2a}$
is a smooth function on $X$.

Let $S_t := \{x \in X \mid r(x) = t\}$ be the corresponding spheres of
radius $t$, and $B_t := \{x \in X \mid r(x) \leq t\}$ the
corresponding closed balls. Then  $S_t$ and $B_t$ are compact for all $t
\in \bR_{\geq 0}$.

As in \S \ref{ss:l-maxind-con-phi-pf-2d}, we consider the partially defined functional
$\phi_Q$, defined by the same formula.
% or every $Q \in (\cO_{X})_d$, partially defined functional on $C_c^{\infty}(X)$:
% \[
% \phi_Q: \alpha \mapsto \int_{X} \alpha \omega \wedge \bar Q \bar \omega.
% \]
For all $a \in \bR_{\geq 0}$, let $(C_c^\infty(X))_{> a}$ be the
subspace of (smooth compactly-supported) functions $\alpha$ such that
$\lim_{r \to 0} |\alpha/r^a| = 0$. We can restate this as follows in
terms of the derivatives of $\alpha$.  For each coordinate function
$x_i$ on $X$, let $d_i$ be its weight, and assign $\partial_i$ weight
$-d_i$.  Then $C_\infty^c(X)_{> a} \subseteq C_\infty^c(X)$ is the
subspace of smooth functions represented by smooth functions on the
affine space $\bA$ all whose derivatives of weights $\geq -a$ vanish
at the origin (this includes functions for which all derivatives
up to order $a$ vanish at the origin). In particular, for $\alpha \in
C_\infty^c(X)_{> a}$, it follows that there exists $C_\alpha > 0$ such
that $|\alpha| < C_\alpha \cdot r^{a+1}$ (and the converse holds as
well).

With these definitions, 
Lemma \ref{l:phiq-conv} extends to this context, with the same
proof.
% \begin{lemma}
% The functional $\phi_Q$ converges for $\alpha \in (C_c^\infty)_{>d}$.
% \end{lemma}
% \begin{proof}

% We can rewrite
% \[
% \phi_Q(\alpha) = \int_{0}^\infty dt \int_{S_t} \alpha \bar Q
% (\omega \wedge \bar \omega)/dr.
% \]
% Then the above integral converges absolutely, since for $C_\alpha > 0$
% such that $|\alpha| < C_\alpha \cdot r^{d+1}$, and all $t > 0$,
% \[
% |\int_{S_t} \alpha \bar Q \omega \wedge \bar \omega/dr| 
% < 
% C_\alpha |\int_{S_t} |r^{d+1}| |Q| \omega \wedge \bar \omega/dr|
% =
% C_\alpha |\int_{S_1} |r^{d+1}| |Q| \omega \wedge \bar \omega/dr|.
% \]
% Letting $C$ equal the right-hand side and $R > 0$ be such that
%  $\alpha$ is supported in $B_R$, we obtain 
%  $|\phi_Q(\alpha)| < C \cdot R$, which proves the absolute convergence.
% \end{proof}
As before, extend $\phi_Q$ arbitrarily to a functional on all of
$C_\infty^c(X)$. Note that the difference between any two such
extensions annihilates $C_c^{\infty}(X)_{>d}$, so is a linear
combination of derivatives of the delta distribution at the origin of weights
$\geq -d$.  It then follows exactly as before that $\phi_Q$ is annihilated by
every Hamiltonian vector field $\xi$ on $X$,
i.e., $\phi_Q \in \Hom(M(X), N)$.  We also can define the linear map $\Phi$ 
and the Hermitian pairing $\langle -, -\rangle$ on $\cO_X$ just
as before. Then, we claim that Lemma \ref{l:phiq-2d} extends to this setting.

The proof of Lemma \ref{l:phiq-2d} is the same as before, except that we
have to modify the function $H$ as follows.

Recall from above that $a$ was the least common multiple of the
weights $a_i$ of the coordinate functions $x_i$ which realize the embedding of
$X$ into affine space. Let $h \in
C_c^\infty(\bR)$ be a function such that $h(0) \neq 0$, and let $H \in C_c^\infty(X)$
be the function $H=h \circ r^{2a}$.
As before,
\[
\epsilon (P \cdot H) = \int_{X} \bar Q P \Eu \pr^q_{> d}(H) \omega \wedge \bar \omega,
\]
for all choices of $q$. Taking the limit as $q \to 0$, this becomes
\[
\int_{0}^\infty dt (at^{2a-1} h'(t^{2a})) \cdot \langle P, Q \rangle
= - \frac{1}{2} \langle P, Q \rangle h(0) = -\frac{1}{2} \langle P, Q \rangle H(0). 
\]
This proves Lemma \ref{l:phiq-2d} in the general setting, and hence
Lemma \ref{l:maxind-con-phi-2d}. The theorem is proved.

\bibliographystyle{amsalpha}
\bibliography{master}

\end{document}